\numberwithin{equation}{subsection}
\newtheorem{theorem}{Theorem}[section]
\newtheorem{proposition}[theorem]{Proposition}
\newtheorem{corollary}[theorem]{Corollary}
\newtheorem{conjecture}[theorem]{Conjecture}
\newtheorem{lemma}[theorem]{Lemma}
\theoremstyle{remark}
\newtheorem{remark}[theorem]{Remark}
\theoremstyle{definition}
\newtheorem{definition}[theorem]{Definition}
\def\beq{\begin{eqnarray}}
\def\eeq{\end{eqnarray}}
\def\bes{\begin{eqnarray*}}
\def\ees{\end{eqnarray*}}
\def\oF{\overline{\F}}
\DeclareMathOperator{\Aut}{Aut} 
 \DeclareMathOperator{\Hom}{Hom}
\def\C{\mathbb{C}}
\def\calA{{\mathcal{A}}}
\def\calU{{\mathcal{U}}}
\def\calF{{\mathcal{F}}}
\def\calH{{\mathcal{H}}}
\def\calP{\mathcal{P}}
\def\x{\mathbf{x}}
\def\P{\mathcal{P}}
\def\H{\mathbb{H}}
\def\N{\mathbb{Z}_{\geq 0}}
\def\F{\mathbb{F}}
\def\Q{\mathbb{Q}}
\def\calC{{\mathcal C}}
\def\calB{{\mathcal B}}
\def\calA{{\mathcal A}}
\def\Z{\mathbb{Z}}
\def\Gm{\mathbb{G}_m}
\def\diag{{\rm diag}}
\newcommand{\nc}{\newcommand}
\def\E{{\rm E}}
\nc{\op}[1]{\mathop{\mathchoice{\mbox{\rm #1}}{\mbox{\rm #1}}
{\mbox{\rm \scriptsize #1}}{\mbox{\rm \tiny #1}}}\nolimits}
\nc{\al}{\alpha}
\nc{\ep}{\varepsilon} \nc{\ga}{\gamma} \nc{\Ga}{\Gamma}
\nc{\la}{\lambda} \nc{\La}{\Lambda} \nc{\si}{\sigma}
\nc{\Sig}{{\Gamma}} \nc{\Om}{\Omega} \nc{\om}{\omega}
\nc{\Frob}{\op{ Frob}}
\nc{\SL}{{\rm SL}} \nc{\GL}{{\rm GL}} \nc{\PGL}{{\rm PGL}}
\nc{\G}{{\rm G}}
\def\calM{{\mathcal{M}}}
\def\x{{\bf x}}
\def\rmH{{\rm H}}
\nc{\cpt}{{\op{cpt}}} \nc{\Dol}{{\op{Dol}}} \nc{\DR}{{\op{DR}}}
\nc{\B}{{\op{B}}} \nc{\Triv}{\op{Triv}} \nc{\Hod}{{\op{Hod}}}
\nc{\Log}{{\op{Log}}} \nc{\Exp}{{\op{Exp}}} \nc{\Est}{E_{\op{st}}}
\nc{\Hst}{H_{\op{st}}} \nc{\Left}[1]{\hbox{$\left#1\vbox to
  10.5pt{}\right.\nulldelimiterspace=0pt \mathsurround=0pt$}}
\nc{\Right}[1]{\hbox{$\left.\vbox to
  10.5pt{}\right#1\nulldelimiterspace=0pt \mathsurround=0pt$}}
\nc{\LEFT}[1]{\hbox{$\left#1\vbox to
  15.5pt{}\right.\nulldelimiterspace=0pt \mathsurround=0pt$}}
\nc{\RIGHT}[1]{\hbox{$\left.\vbox to
 15.5pt{}\right#1\nulldelimiterspace=0pt \mathsurround=0pt$}}
\nc{\bee}{{\bf E}} \nc{\bphi}{{\bf \Phi}}
\begin{document}

\title{$E$-series of character varieties of non-orientable surfaces}

\author{Emmanuel Letellier \\ {\it
  Universit\'e de Paris} \\ {\tt
  letellier@math.univ-paris-diderot.fr}\and Fernando Rodriguez-Villegas
  \\ 
{\it  ICTP Trieste} \\ {\tt villegas@ictp.it}  }

 \pagestyle{myheadings}

\maketitle

\begin{abstract} In this paper we are interested in two kinds of
  (stacky) character varieties associated to a compact non-orientable
  surface   $\Sigma$. (A) We consider simply the quotient stack of the
  space of   representations of the fundamental group of $\Sigma$ to
  $\GL_n$. (B)   We choose a set $S$ of $k$-punctures of $\Sigma$ and
  a generic   $k$-tuple of semisimple conjugacy classes of $\GL_n$, and we
  consider the stack of anti-invariant local systems on the
  orientation cover of $\Sigma$ with local monodromies around the
  punctures given by the prescribed conjugacy classes. We compute the
  number of points of these spaces over finite fields from which we
  get a formula for their $E$-series (a certain specialization of the
  mixed Poincar\'e series). 
  Unexpectedly (see Remark~\ref{orientable}), when the Euler
  characteristic of $\Sigma$ is even, our formulas turn out to be
  closely related to those arising from the character varieties of
  punctured compact orientable Riemann surfaces studied in~\cite{HLRV}
  and~\cite{HLRV1}.
 \end{abstract}

\newpage
\tableofcontents
\newpage

\section{Introduction}

Let $K$ an algebraically closed field of characteristic $\neq 2$.

We let $r$ be a non-negative integer, put $\varrho=r-2$, and let
$\Sigma$ denote a non-orientable compact surface of Euler
characteristic $-\varrho$ (the connected sum of $r$ real projective
planes). Let $S=\{\alpha_1,\dots,\alpha_k\}$ be a set of $k$ points of
$\Sigma$. Fix a base point $b\in \Sigma\backslash S$. The fundamental
group $\Pi=\pi_1(\Sigma\backslash S,b)$ has the following standard
presentation

$$
\Pi=\left\langle a_1,\dots,a_r,x_1,\dots,x_k\,\left|\, a_1^2\cdots
    a_r^2x_1\cdots x_k=1\right\rangle\right. 
$$
Here $x_i$ represents a small loop encircling the $i$-th puncture.

Put $\G=\GL_n(K)$ and let $\sigma:\G\rightarrow \G$ be the Cartan  
involution $g\mapsto {^t}g^{-1}$. We let
$\G^+=\G\rtimes\langle\sigma\rangle$ be the corresponding
semi-direct product. Fix  a $k$-tuple $\calC=(C_1,\dots,C_k)$ of
conjugacy classes of $\G$ and for
$\varepsilon\in \langle\sigma\rangle$ consider the representation variety
$$
\Hom_\calC^\varepsilon(\Pi,\G^+):=\left\{\rho\in {\rm
    Hom}(\Pi,\G^+)\,\left|\, \pi\circ\rho(a_i)=\varepsilon,\,
    \rho(x_j)\in \iota(C_j)\text{ for all } i=1,\dots,r\text{ and
    }j=1,\dots,k\right\}\right. 
$$
where $\pi:\G^+\rightarrow \langle \sigma\rangle$ is the quotient map
and $\iota:G\rightarrow G^+$ the  natural  inclusion.

The conjugation action of $\G$ on $\G^+$ induces an action on ${\rm
  Hom}_\calC^\varepsilon(\Pi,\G^+)$ and we consider the quotient stack

$$
\calM_\calC^\varepsilon:=\left[\Hom_\calC^\varepsilon(\Pi,\G^+)/\G\right].
$$
If $\varepsilon=1$ then $\Hom_\calC^\varepsilon(\Pi,\G^+)$ is  the space 
\begin{align*}
\Hom_\calC(\Pi,\G)&=\left\{\rho\in\Hom(\Pi,\G)\,|\, \rho(x_i)\in C_i, i=1,\dots,k\right\}\\
&=\left.\left\{(A_1,\dots,A_r,X_1,\dots,X_k)\in \G^r\times
  \prod_{i=1}^k C_i\,\right|\, A_1^2\cdots A_r^2\, X_1\cdots
  X_k=1\right\}. 
\end{align*}
On the other hand, if $\varepsilon=\sigma$
then
$$
\Hom_\calC^\varepsilon(\Pi,\G^+)=\left.\left\{(A_1,\dots,
    A_r,X_1,\dots, X_k)\in \G^r\times\prod_{i=1}^kC_i\,\right|\,
  A_1\sigma(A_1)\cdots A_r\sigma(A_r)X_1\cdots X_k=1\right\}. 
$$
The last space has an interpretation in terms of anti-invariant
representations $\tilde{\Pi}\rightarrow \G$ of the fundamental group
$\tilde{\Pi}$ of $\tilde{\Sigma}$, where
$p:\tilde{\Sigma}\rightarrow\Sigma$ is the orientation covering (see
\S \ref{charactervar} for details). As it will become clear below,
this space is much better behaved  than the previous one.

In this paper we consider the following two cases :

(A) $\varepsilon=1$ and $S=\emptyset$.

(B) $\varepsilon=\sigma$, the $k$-tuple $\calC=(C_1,\dots, C_k)$ is
\emph{generic} and the $C_i$ are semisimple.


We prove that the corresponding stacks $\calM_\calC^\varepsilon$
are \emph{rational count}. In other words, the number of points over
generic finite fields is given by a fixed rational function in the
size of the field (see Definition~\ref{ratnl-count-defn} for the
formal definition). As in the case of polynomial count
(see~\cite[Appendix]{HRV}) the corresponding counting rational
function has a geometric meaning. Indeed, by Theorem~\ref{ratnl-count}
it coincides with the $E$-series
$E(\calM_\calC^\varepsilon;x)$,
which is the specialization $t\mapsto -1$
of the mixed Poincar\'e series
$H_c(\calM_\calC^\varepsilon;x,t)$.
This latter series encodes the dimension of the successive
subquotients of the weight filtration on the compactly supported
cohomology of $\calM_\calC^\varepsilon$
(see~\S \ref{MPS}). 


As we will see the genericity assumption on $\calC$
will simplify the calculation when $\varepsilon=\sigma$.
 It is not clear how to impose
genericity in the case $\varepsilon=1$.
For example, if $r=1=k$ then the equation
$$
A^2=\xi\, {\rm I}_n
$$
defining $\calM_\calC^\varepsilon$ for a given $\xi$ is just
equivalent to the case $\xi=1$ by replacing $A$ by $\sqrt{\xi}A$.

\subsection{$E$-series of $\calM_\calC^\varepsilon$ in case (A)}
\label{non-orient}
Put
$$
\calM_{\varrho,n}:=\left[\left\{(A_1,\dots,A_r)\in\G^r\,|\, A_1^2\cdots
    A_r^2={\rm I}_n\right\}/\G\right],  \qquad  \varrho:=r-2,
$$
and consider the generating function
\begin{equation}
\label{M-defn}
M_\varrho(q,T)=\sum_{n\geq 0} M_{\varrho,n}(q)\,T^n:=1+\sum_{n\geq
  1}q^{-\varrho\binom n2}\, |\calM_{\varrho,n}(\F_q)|\,T^n. 
\end{equation}

For $\lambda\in\calP$ denote by 
\begin{equation}
\label{hook-polyn}
H_\lambda(q)=\prod_x(q^{h(x)}-1)
\end{equation}
 the \emph{hook polynomial} (where $x$ runs over the set of boxes of the
Young diagram of $\lambda$ and $h(x)$ is the hook length).   


For an integer $\varrho$ consider the generating function
\begin{equation}
\label{Z-defn}
Z_\varrho(q,T):=\sum_{\lambda\in\calP}\left(q^{-n(\lambda)}H_\lambda(q)\right)^\varrho T^{|\lambda|},
\end{equation}
where $n(\lambda)=\sum_{i>0}(i-1)\lambda_i$ if
$\lambda=(\lambda_1,\lambda_2,\dots)\in\calP$.  Define 
$\{V_{\varrho,n}(q)\}_n$ by the formula
\begin{equation}
\label{V_n-defn}
\sum_{n\geq 1}V_{\varrho,n}(q) T^n:=\Log\,\left(Z_\varrho(q,T)\right)
\end{equation}
and, following~\cite{HMRV}, let for any positive  integer  $k$
\begin{equation}
\label{V_n-defn-1}
V_{\varrho,n,k}(q):=\sum_{\substack{\, \,
    m\mid k^\infty\\\! m|n}}\frac 1m V_{\varrho, \frac nm}(q^m),
\end{equation}
where $m\mid k^\infty$ means that $m$ divides a sufficiently high
power of $k$ or equivalently that $m$ is only divisible by primes
dividing $k$.

We have  the  following (see~\S\ref{non-orient-pf} for the proof).
\begin{theorem} 
\label{non-orient-thm}
(i) The stack $\calM_{\varrho,n}$ has rational count for $r=1$
($\varrho<0$) and polynomial count for $r>1$ ($\varrho\geq 0$).

(ii) Let
 \beq
\label{W-fmla}
W_{\varrho,n}(q):=2V_{\varrho,n}(q)+(q-2)V_{2\varrho,n/2}(q)
+\tfrac12(q-1)
\left(V_{\varrho,n/2,2}(q^2)-V_{2\varrho,n/2,2}(q)\right), 
\eeq
 where $V_{\varrho,n}$ and $V_{\varrho,n,k}$ are defined in~\eqref{V_n-defn}
and ~\eqref{V_n-defn-1} for  integer $n$  and set to zero if $n$ is not an
integer.  Then
$$
M_\varrho(q,T)=\Exp\left(\sum_{n\geq 1}W_{\varrho,n}(q) T^n\right)
$$

(iii) The $E$-series of $\calM_{\varrho,n}$ is given by
$$
E(\calM_{\varrho,n};q)=q^{\varrho\binom n2}{\rm
  Coeff}_{T^n}\left(\Exp\left(\sum_{n\geq 1}W_{\varrho,n}(q) T^n\right)\right). 
$$
\end{theorem}


We give  some examples to illustrate the theorem.

\bigskip
\noindent
(i) $\varrho=-1$

\medskip
Here $r=1$ and
$$
|\calM_{1,n}(\F_q)|=\frac{I_n(q)}{|GL_n(\F_q)|},
$$
where $I_n(q)$ denotes the number of involutions in
$\G(\F_q)$, a polynomial in $q$. Concretely,
$$
I_n(q):=|\{x \in \G(\F_q) \,|\, x^2=1\}|
$$
with first few values
\bes
I_1(q)&=& 2 \\
I_2(q)&=& q^2 + q + 2\\
I_3(q)&=& 2q^4 + 2q^3 + 2q^2 + 2\\
I_4(q)&=& q^8 + q^7 + 4q^6 + 3q^5 + 3q^4 + 2q^3 + 2\\
I_5(q)&=& 2q^{12} + 2q^{11} + 4q^{10} + 4q^9 + 6q^8 + 4q^7 + 4q^6 +
2q^5 + 2q^4 + 2\\
\ees
 We have (see Corollary~\ref{rho=-1})
\begin{equation}
\label{r=1}
M_{-1}(q,T)=\sum_{n\geq 0}\frac{q^{\binom n2}I_n(q)}{|\GL_n(\F_q)|}\,  T^n=
\Exp\left(\frac 2{(q - 1)}T + \frac 1{(q + 1)}T^2\right).
\end{equation}
This is a $q$-analogue of the known generating series for the number
of involutions $t_n$ in the symmetric group $S_n$. Namely,
$$
\sum_{n\geq 0}\frac{t_n}{n!}\, T^n=e^{T+\tfrac12T^2}.
$$

\bigskip
\noindent
(ii) $\varrho=0$
\medskip

The first few terms of $M_0(q,T)$ are
$$
M_0(q,T)= 1 + 2T + (q + 3)T^2 + (2q + 6)T^3 + (q^2 + 4q + 9)T^4
+\cdots
$$
 We have
$$
Z_0(q,T)=\prod_{n\geq 1}(1-T^n)^{-1}
$$
and hence $V_{0,n}(q)=1$ for all $n$. It follows from
Theorem~\ref{non-orient-thm} that 
\begin{equation}
\label{M_0-fmla}
\Log\left(M_0(q,T)\right)=2T+qT^2+2T^3+qT^4+\cdots
\end{equation}
or  equivalently,
\begin{equation}
\label{M_0-fmla-1}
M_0(q,T)=\prod_{n\geq 1}(1-T^{2n-1})^{-2}(1-qT^{2n})^{-1}.
\end{equation}

\begin{remark}
  As pointed out by Frobenius and Schur~\cite{Frobenius-Schur} the
  change of variables $x\mapsto xz^{-1}$
  in an arbitrary group takes the equation $x^2z^2=1$
  to $z^{-1}xz=x^{-1}$.
  Hence $M_{0,n}(q)$  equals the number of real conjugacy classes in $\G(\F_q)$
  (i.e. classes of elements conjugate to their inverses). In this form
  \eqref{M_0-fmla-1} was first proved by
  Gow~\cite{Gow},~\cite{Waterhouse}.
\end{remark}

\bigskip
\noindent
(iii) $\varrho>0$

\medskip The expression for $M_{\varrho,n}(q)$ in
Theorem~\ref{non-orient-thm} is easy to program; we give below the
first few values for $r=3, (\varrho=1)$. 


\bes
M_{1,1}(q) &=& 2q - 2\\
M_{1,2}(q) &=& 3q^4 - 2q^3 - 3q^2 + 2\\
M_{1,3}(q) &=& 2q^9 - 2q^8 + 4q^7 - 12q^6 + 10q^5 - 6q^4 + 6q^3 - 2q^2 + 2q - 2\\
\ees

\begin{remark}
\label{irred-comp}
(i) Consider the affine variety  
$$
\calU_r:=\left\{(A_1,\dots,A_r)\in\G^r\,|\, A_1^2\cdots
    A_r^2={\rm I}_n\right\}, \qquad  r\geq1.
$$
By the Weil conjectures the leading coefficient of the polynomial
$|\calU_r(\F_q)|$ equals the number of irreducible components of
largest dimension of $\calU_r$ over the complex numbers.  It is not
difficult to deduce from Theorem~\ref{non-orient-thm} the value of
this leading coefficient. (In general, counting points over $\F_q$
would not yield anything about components of non-maximal dimension.)

On the other hand the number of irreducible components and their
dimensions is computed in~\cite[Thms 2.1, 2.2, 2.3, 3.2 and
Prop. 3.4]{BKCh}.  It is a pleasant exercise to verify that everything
checks out. Here is a sketch on determining the leading coefficients
of the counting polynomials. It is enough to compute the leading
coefficient of $W_{\rho,n}$. We start by verifying that the highest
power of $q$ in $V_{\rho,n}(q)$ is $\rho n(n+1)/2$. Hence the first
term in~\eqref{W-fmla} dominates for $n$ odd and contributes $2$ as
its leading coefficient. The first term also dominates with
coefficient $2$ for $n$ even as long as $\rho n(n+1)/2$ is bigger than
$1+\rho(n/2+1)n/2$. This happens if $\rho>1$ or $\rho=1$ and $n>2$. In
the special case $\rho=1$ and $n=2$ all the terms in~\ref{W-fmla}
actually contribute giving a total leading coefficient of $3$.  If
$\rho=0$ and $n$ is even on the other hand, the remaining terms
in~\eqref{W-fmla} rather than the first term are the ones that
contribute to the highest power of $q$ with coefficient
$1=1+(1/2-1/2)+(1/4-1/4)+\cdots$. The case $\rho=-1$ is easy to check
(see also~\ref{div-by-2}).

Here is a small table of leading coefficients summarizing the
situation.
\begin{center}
\begin{tabular}{c|cccccc}

$r\backslash n$  &$1$&$2$&$3$&$4$&$5$&$\cdots$ \\
\hline

$1$ & $2$&$1$&$2$&$1$&$2$&$\cdots$\\
$2$ & $2$&$1$&$2$&$1$&$2$&$\cdots$\\
$3$ & $2$&$3$&$2$&$2$&$2$&$\cdots$\\
$4$ & $2$&$2$&$2$&$2$&$2$&$\cdots$\\
$5$ & $2$&$2$&$2$&$2$&$2$&$\cdots$\\
$\vdots$&&&$\vdots$&&&
\end{tabular}
\end{center}
Notice the already mentioned special case $r=3,n=2$ where the leading
coefficient is $3$ breaking the pattern; see~\cite[\S 3]{BKCh} for the
explicit description of the irreducible components.

(ii) It seems that $M_{\varrho,n}(q)\equiv 0 \bmod 2$ for all $\varrho>0$
and $n$ odd. Perhaps this is a consequence of the involution
$A\mapsto -A$ in $G$ acting without fixed points on the irreducible
components of $\calU_r$, as it happens for $r=1$ (see
Remark~\ref{div-by-2}), but we have not pursued this further.
\end{remark}

\subsection{$E$-series of $\calM_\calC^\varepsilon$ in case (B)}
\label{case-ii}

We now consider the case (B) where $\epsilon=\sigma$ and to alleviate
the notation we write $\calM_\calC$ instead of
$\calM_\calC^\varepsilon$.  Let $\calP$ be the set of all partitions
and let $r>0$ be an integer. As in~\cite{HLRV} we introduce the Cauchy
function
$$
 \label{cauchygk}
\Omega(z,w)=\Omega_{r,k}(z,w):= \sum_{{\lambda}\in {\cal P}}{\cal
  H}_{r,\lambda}(z,w) \left(\prod_{i=1}^k  
  \tilde{H}_\lambda(\x_i;z^2,w^2)\right),
$$
where 
$$
{\cal H}_{r,\lambda} (z,w):=\prod \frac{(z^{2a+1}-w^{2l+1})^r}
{(z^{2a+2}-w^{2l})(z^{2a}-w^{2l+2})}
$$
is the $(z,w)$-deformed hook function with exponent $r$ and
$\tilde{H}_\lambda(\x_i,z,w)$ denotes the modified Macdonald symmetric
function in the variables $\x_i$. As in~\cite[(2.4.11)]{HRV} it is
easily checked that
\begin{equation}
 \label{euler-spec-hook-pol}
\calH_{r,\lambda}(\sqrt{q},1/\sqrt{q})=q^{-\tfrac12\varrho\langle\lambda,\lambda\rangle}
H_\lambda(q)^\varrho, \qquad
\varrho=r-2. 
\end{equation}

We stress the fact that unlike~\cite{HLRV}, however, the integer $r$
is not necessarily even. In particular, exchanging $z$ and $w$
involves a sign if $r$ is odd. More precisely,
\begin{equation}
{\cal H}_{r,\lambda} (w,z)=(-1)^r
{\cal H}_{r,\lambda'} (z,w),
\end{equation}
where $\lambda'$ is the dual partition.
 
For a $k$-tuple of partitions ${\mu}=( \mu^1,\dots,\mu^k)$ of $n$ let
\begin{equation}
\label{H-defn}
\H_\mu(z,w):= (z^2-1)(1-w^2)\left\langle \Log
  \left(\Omega(z,w)\right),h_\mu\right\rangle,   
\end{equation}
where $h_\mu=h_{\mu^1}(\x_1)\cdots h_{\mu^k}(\x_k)$ and
$h_{\mu ^i}(\x_i)$ is the complete symmetric function in the variables
$\x_i$.

We obtain  the  following (see~\S\ref{orient-pf} for the proof).
\begin{theorem}
\label{orient-thm}
We have
\begin{equation}
\E(\calM_\calC;q)=\frac{q^{d_\mu/2}}{q-1}\H_\mu\left(\sqrt{q},
  \frac{1}{\sqrt{q}}\right),
\label{E-series}\end{equation}
where $\mu$ is defined from $\calC$ as in~\S~\ref{orient-pf} and
$d_\mu$ is as in Theorem \ref{maintheo1}. 
\end{theorem}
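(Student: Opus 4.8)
The plan is to deduce Theorem~\ref{orient-thm} as the $E$-polynomial specialization of a point-count over finite fields, following the now-standard strategy of~\cite{HRV},~\cite{HLRV}. First I would establish that $\calM_\calC$ is rational count, so that by Theorem~\ref{ratnl-count} the $E$-series equals the counting function; this reduces the problem to computing $|\calM_\calC(\F_q)|$ as a rational function of $q$. The key identity to exploit is the interpretation of the $\varepsilon=\sigma$ representation variety as anti-invariant local systems on the orientation cover $\tilde\Sigma$ (a genuine orientable surface of Euler characteristic $2\varrho$, here of genus $\varrho-1$ after accounting for the doubled punctures), with prescribed semisimple monodromies in the classes $C_j$. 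Via Frobenius-type character sums over $\G(\F_q)$, the count of such twisted representations can be written as a sum over irreducible characters of $\G(\F_q)$ of products of character values at the semisimple elements, weighted by an appropriate power coming from the Euler characteristic. This is exactly the mechanism producing the factor $\calH_{r,\lambda}$ and the Macdonald functions $\tilde H_\lambda$ in $\Omega(z,w)$ after passing through Green's description of $\Irr\G(\F_q)$ and the change of variables $(z,w)=(\sqrt q,1/\sqrt q)$ governed by~\eqref{euler-spec-hook-pol}.

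Concretely I would proceed as follows. Step one: set up the nonabelian Hodge / Weil-type character formula for the weighted count of $\rho\in\Hom_\calC^\sigma(\Pi,\G^+)$, using that $\G^+$ acts and that the semisimple classes $C_j$ with the genericity hypothesis force all nonzero contributions to come from honest characters with controlled multiplicities. Step two: parametrize $\Irr\G(\F_q)$ by the Deligne--Lusztig / Green construction, so the sum becomes a sum over types, each type contributing via products of values of Green functions at the $C_j$; the genericity of $\calC$ collapses the sum to the principal series / generic types, and the combinatorial bookkeeping yields a sum over a single partition $\lambda$ with weight $\calH_{r,\lambda}(\sqrt q,1/\sqrt q)$ and the $C_j$-contributions assembling into $\prod_i \tilde H_\lambda(\x_i;q,1/q)$ evaluated against $h_{\mu^i}$. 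Step three: apply the plethystic logarithm to pass from the ``all $n$'' generating series to the connected piece, as encoded in~\eqref{H-defn}; the $(z^2-1)(1-w^2)$ prefactor accounts for the center $\Gm$ and the stabilizer of a generic representation, i.e.\ for the difference between the naive count and the stacky count. Step four: match the normalizing power of $q$: verify that the factor $q^{d_\mu/2}/(q-1)$ is precisely the ratio between $|\calM_\calC(\F_q)|$ and the coefficient extracted from $\Log(\Omega)$, where $d_\mu$ is the dimension computed in Theorem~\ref{maintheo1} and the $1/(q-1)$ again comes from the central torus.

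The main obstacle I anticipate is the character-sum evaluation in the twisted ($\varepsilon=\sigma$) setting: the presence of $\sigma$ means one is not summing over ordinary commutator-type words $A_i\sigma(A_i)$ but over a product built from the Cartan involution, so the relevant orthogonality is not the plain one for $\G(\F_q)$ but a ``twisted'' one, effectively counting on the orientation double cover. Making this precise requires identifying the correct twisted class functions and showing that, on the generic semisimple classes, the twisting multiplies each irreducible's contribution by a clean factor (a sign or a power of $q$) rather than mixing characters. This is where the relation $\calH_{r,\lambda}(w,z)=(-1)^r\calH_{r,\lambda'}(z,w)$ with $r$ possibly odd enters, and getting the signs and the duality $\lambda\leftrightarrow\lambda'$ exactly right is the delicate point. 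Once the twisted orthogonality is in hand, the remaining steps --- recognizing the hook function, inserting the Macdonald functions, taking $\Log$, and tracking the power of $q$ --- are parallel to~\cite{HLRV} and largely bookkeeping; I would organize them so that the specialization~\eqref{euler-spec-hook-pol} is invoked at the very end to land on the stated formula~\eqref{E-series}.
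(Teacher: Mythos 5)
Your overall route is the paper's route: establish rational count, invoke Theorem~\ref{ratnl-count} to identify the $E$-series with the counting function, express $|\calM_\calC(\F_q)|$ as a character sum over $\Irr\G(\F_q)$, and then run the machinery of \cite[Theorem 5.2.1]{HLRV} to evaluate that sum via types and the specialization~\eqref{euler-spec-hook-pol}. The difficulty is that you explicitly leave unresolved the one step that is genuinely new relative to \cite{HLRV}, namely the ``twisted orthogonality'' converting the condition $A_1\sigma(A_1)\cdots A_r\sigma(A_r)X_1\cdots X_k=1$ into a clean character sum. In the paper this is exactly the content of the Mass formula (Theorem~\ref{Masstheo}), whose proof rests on Gow's identity $\#\{a\in\G(\F_q)\,:\,a\sigma(a)=y\}=\sum_\chi\chi(y)$; Fourier transforming gives $\calF(\eta^\sigma)(\chi)=|\G(\F_q)|/\chi(1)$ for \emph{every} irreducible $\chi$, with no sign, no power of $q$, and no mixing. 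So each $\sigma$-twisted factor contributes exactly the same weight as in the orientable case, the sum runs over all characters (not just real ones), and from that point on the computation really is verbatim \cite{HLRV}. Your anticipated delicate point --- signs coming from ${\cal H}_{r,\lambda}(w,z)=(-1)^r{\cal H}_{r,\lambda'}(z,w)$ and a $\lambda\leftrightarrow\lambda'$ duality --- is a red herring: that symmetry plays no role in the point count, which only uses the specialization $(z,w)=(\sqrt q,1/\sqrt q)$ of ${\cal H}_{r,\lambda}$. Since the Mass formula is already proved earlier in the paper, you could simply have cited it; as written, your proposal defers its central new ingredient to an unproved ``obstacle.''

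Two smaller inaccuracies: the genericity of $\calC$ does not collapse the character sum to principal series or ``generic'' types --- all types contribute, and genericity is used (as in \cite{HLRV}) to show that the full sum over types equals $(q-1)$ times the relevant coefficient of $\Log(\Omega)$, which is where the prefactor $(z^2-1)(1-w^2)$ in~\eqref{H-defn} comes from. Also, the orientation cover $\tilde\Sigma$ has Euler characteristic $-2\varrho$ and genus $r-1=\varrho+1$, not $\varrho-1$; this is harmless for the argument but worth correcting.
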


\begin{remark}
\label{orientable}
(i) It follows from  Theorem~\ref{orient-thm} that if $r=2h$ is even
and $\Sigma'$ is 
an orientable compact Riemann surface of Euler characteristic $r-2$
and punctures $S'=\{\alpha'_1,\dots,\alpha_k'\}$, then comparison with
\cite{HLRV} shows that the right hand side of~\eqref{E-series} is also
the $E$-series of the quotient stack
$$
\calM_\calC':=\left[\left\{\rho\in\Hom\big(\pi_1(\Sigma'\backslash
    S')\big),\G\,\left|\, \rho(z_i)\in C_i\right\}\right./\G\right]. 
$$
where $z_i$ is a single loop around the puncture $\alpha_i'$. We thus
have
$$
E(\calM_\calC;q)=E(\calM_\calC';q).
$$

(ii) We may construct a correspondence between $\calM_\calC$ and
$\calM_\calC'$ in (i), which gives some indication of how these stacks
are related. We only give a sketch of the construction here for the
case $r=2$.  For fixed $h\in \GL_n$ consider the following varieties:
$$
\calA:=\{(x,z)\,|\, xz\sigma(x)z^{-1}=h\}\subseteq \GL_n\times \GL_n,\qquad
\calB:=\{(x,z)\,|\, xzx^{-1}z^{-1}=h\} \subseteq \GL_n\times \GL_n,
$$
as well as
$$
\calC:=\{(x,z,u)\,|\, \prescript{t}{}\!x=uxu^{-1}, x(zu)z(zu)^{-1}=h\}
\subseteq \GL_n\times \GL_n\times\GL_n.
$$
Note that changing $x$ to $xz^{-1}$  and  then  $z$ to
$\prescript{t}{}\!z$ yields an isomorphism between $\calA$ and the
variety $\{(x,z)\,|\, x\sigma(x)z\sigma(z)=h\}$.

We have natural maps
\[
\begin{tikzcd}
&\arrow[swap]{dl}{p_\calA}  \calC \arrow{dr}{p_\calB} \\
\calA  && \calB
\end{tikzcd}
\]
where $p_\calA(x,z,u):=(x,z)$ and $p_\calB(x,z,u)=(x,zu)$. 
These maps are equivariant for the natural action of
$\GL_n$ on the  three varieties given by
$$
x\mapsto  gxg^{-1},\qquad
z\mapsto gz\prescript{t}{}\!g,\qquad
u\mapsto \prescript{t}{}\!g^{-1}ug^{-1},\qquad g  \in \GL_n.
$$
Since any $x\in \GL_n$ is conjugate to its transpose the maps
$p_\calA,p_\calB$ are surjective with fibers isomorphic to the
stabilizer of $x$ in $\GL_n$.  In particular, for any finite field
$\#\calA(\F_q)=\#\calB(\F_q)$.

(iii) Our surface $\Sigma$ arises from a pair $(\tilde{\Sigma},\sigma)$ with
$\tilde{\Sigma}$ a compact Riemann surface and an
orientation-reversing involution on $\tilde{\Sigma}$ whose fixed-point
set $\tilde{\Sigma}^\sigma$ is empty. In~\cite{BL}, the authors
compute the $E$-polynomials of some character varieties coming from
pairs of the form $(\tilde{\Sigma},\sigma)$ with
$\tilde{\Sigma}^\sigma\neq\emptyset$.
\end{remark}

\begin{remark}
  (i) As in~\cite{HLRV}, there is a natural deformation of the formula
  for $E(\calM_\calC;q)$ to two variables. One might be tempted to
  extend the conjecture on Hodge numbers made there but this may not
  be that straightforward. We do however check one potential case of
  the putative conjecture when $r=k=1$ in~\S\ref{conj-r=k=1} (see
  Theorem~\ref{r=k=1evid}).

  (ii) It was conjectured in~\cite{HLRV}, in particular, that for $r$
  even $\H_\mu(z,w)$ is a polynomial with integer coefficients.  This
  was recently proved by Mellit~\cite{Mellit1}. Note that in general
  $\H_\mu(z,w)$ is not in fact polynomial for $r=1$ (see for example
  Lemma~\ref{r=k=1}). It appears, however, that in general for $r$ odd
  the denominator is fairly small. For example, for $k=1$ we seem to
  just have denominator $q^2+1$ and then only for $n\equiv 2 \bmod 4$
  and when all parts of $\mu$ are divisible by~$2$.  On the other
  hand, note that the specialization
  $q^{d_\mu/2}\H_\mu\left(\sqrt{q}, \frac{1}{\sqrt{q}}\right)$ for any
  $r>1$ {\it is} a polynomial by~\eqref{euler-spec-hook-pol}.
 
  (iii) In any case, we infer a purely combinatorial identity
  involving Macdonald polynomials. Namely, we conjecture (see
  Conjecture~\ref{0conj}) that
$$
(z^2-1)(1-w^2)\Log\left(  \sum_{\lambda\in \P}
\prod \frac{(z^{2a+1}-w^{2l+1})}
{(z^{2a+2}-w^{2l})(z^{2a}-w^{2l+2})}
  \tilde{H}_\lambda(\x;z^2,w^2)\right)
=(z-w)m_{(1)}(\x)+  \frac 1{z^2+1}m_{(2)}(\x) + m_{(1^2)}(\x)
$$
where for a partition $\lambda$, we denote by $m_\lambda(\x)$ the
corresponding monomial symmetric function. 

\end{remark}

\paragraph{Acknowledgements} It is a pleasure to thank T. Scognamiglio for very
useful remarks on this paper and M. Olsson for having read the part on cohomology of stacks. We are grateful for the
hospitality of the institutions were this work was carried out: the first author would like to thank the University of Texas at Austin and the ICTP,  and the
second author would like to thank Oxford University and the
Universit\'e de Caen were the work was started several years ago and
IHP and the Universit\'e de Paris were it was completed.

\section{Preliminaries}

\subsection{Log and Exp}

We let $\Lambda$ be the field $\Q(x_1,\dots,x_r)$ where
$x_1,\dots,x_r$ are indeterminate which commute and $\Lambda[[T]]$ the
ring of series with coefficients in $\Lambda$. 

Consider  $$\psi_n:\Lambda[[T]]\rightarrow\Lambda[[T]],\,
f(q,T)\mapsto f(x_1^n,\dots,x_r^n,T^n).$$The $\psi_n$ are called the
\emph{Adams operations}.  

Define $\Psi:T\Lambda[[T]]\rightarrow T\Lambda[[T]]$
by $$\Psi(f)=\sum_{n\geq 1}\frac{\psi_n(f)}{n}.$$Its inverse is given
by $$\Psi^{-1}(f)=\sum_{n\geq 1}\mu(n)\frac{\psi_n(f)}{n}$$where $\mu$
is the ordinary M\"obius function.  

Define $\Log:1+T\Lambda[[T]]\rightarrow T\Lambda[[T]]$ and its inverse
$\Exp:T\Lambda[[T]]\rightarrow 1+T\Lambda[[T]]$ as  

$$\Log(f)=\Psi^{-1}\left(\log(f)\right)$$and $$\Exp(f)=\exp\left(\Psi(f)\right).$$

They satisfy the following obvious properties.

$$
\Log(f\cdot g)=\Log(f)+\Log(g),\hspace{.5cm}\Exp(h+l)=\Exp(h)\cdot\Exp(l).
$$
They also commute with the Adams operations, namely for any integer $r>0$, we have 

$$
\Log\circ\psi_r=\psi_r\circ\Log,\hspace{.5cm}\Exp\circ\psi_r=\psi_r\circ\Exp.
$$

\begin{remark}
\label{Log-minus}
 Note that the map $T\mapsto -T$ is not preserved under $\Log$ and
 $\Exp$ as $1+q^iT^j=(1-q^{2i}T^{2j})/(1-q^iT^j)$.
\end{remark}

\begin{remark}Let $f\in 1+T\Lambda[[T]]$. If we write
$$
\log\,(f)=\sum_{n\geq 1}\frac{1}{n}U_nT^n,\hspace{1cm}\Log\,(f)=\sum_{n\geq 1}V_nT^n,
$$
then

$$
V_r(q)=\frac{1}{r}\sum_{d | r}\mu(d)\psi_d(U_{r/d}).
$$
\label{LogM}
\end{remark}
We have the following results (details may be found for instance in
Mozgovoy~\cite{mozgovoy}).

For $g\in \Lambda$ and $n\geq 1$ we put 

$$
g_n:=\frac{1}{n}\sum_{d | n}\mu(d)\psi_{\frac{n}{d}}(g).
$$
This is the M\"obius inversion formula of $\psi_n(g)=\sum_{d| n}d\cdot g_d$.

\begin{lemma} Let $g\in \Lambda$ and $f_1,f_2\in 1+T\Lambda[[T]]$ such that

$$
\log\,(f_1)=\sum_{d=1}^\infty g_d\cdot\log\,(\psi_d(f_2)).
$$
Then m
$$
\Log\,(f_1)=g\cdot \Log\,(f_2).
$$
\label{moz}\end{lemma}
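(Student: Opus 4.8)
The plan is to reduce everything to the coefficient level by applying the operator $\Log = \Psi^{-1}\circ\log$ to the hypothesis and using that $\Psi^{-1}$ commutes with the Adams operations in an appropriate sense. First I would rewrite the hypothesis as $\log f_1 = \sum_{d\ge 1} g_d\,\psi_d(\log f_2)$, using that $\log$ commutes with each $\psi_d$ (this is immediate from the definition of $\psi_d$ as a substitution of variables, since $\log$ is defined by a universal power series with rational coefficients). Next, I would write $\log f_2 = \sum_{m\ge 1}\tfrac1m U_m T^m$ so that $\Log f_2 = \sum_{m\ge 1} V_m T^m$ with $V_r = \tfrac1r\sum_{e\mid r}\mu(e)\,\psi_e(U_{r/e})$ by Remark \ref{LogM}. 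The goal is to show that under the hypothesis the analogous coefficients of $\log f_1$, call them $\tfrac1n \widetilde U_n$, produce $\Log f_1 = \sum_n \widetilde V_n T^n$ with $\widetilde V_n = g\cdot V_n$ for every $n$.

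The key computation is a double Möbius inversion. From the hypothesis, extracting the coefficient of $T^n$ in $\log f_1$ gives
\[
\frac1n \widetilde U_n \;=\; \sum_{\substack{d,m\,:\,dm=n}} g_d\cdot \frac{1}{m}\,\psi_d(U_m),
\]
since $\psi_d$ sends $T^m\mapsto T^{dm}$. Hence $\widetilde U_n = \sum_{dm=n} \tfrac{n}{m} g_d\,\psi_d(U_m) = \sum_{dm=n} d\, g_d\,\psi_d(U_m)$. Now I would substitute the definition $\psi_n(g) = \sum_{e\mid n} e\, g_e$ (the relation displayed just before the lemma) and feed this into the formula $\widetilde V_r = \tfrac1r\sum_{e\mid r}\mu(e)\,\psi_e(\widetilde U_{r/e})$, then reorganize the resulting triple sum over divisors. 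Using $\psi_e\circ\psi_d = \psi_{ed}$ and the standard identity $\sum_{e\mid t}\mu(e) = [t=1]$, the sum should collapse: the Möbius factor kills all terms except those forcing the "$g$-part" to assemble back into $g$ itself (via $\psi_e(g_d)$ summing to $g$ when the indices match up), leaving exactly $\widetilde V_r = g\cdot V_r$. This is essentially the multiplicativity of $\Psi$ with respect to the "plethystic" product by a scalar $g$, i.e. $\Psi(g\cdot f) $ vs. $g\cdot\Psi(f)$ corrected by Adams operations — the content of Mozgovoy's formalism cited in the excerpt.

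The main obstacle is bookkeeping: getting the triple divisor sum to telescope correctly requires being careful about which Adams operation acts on which factor, and about the order of summation. A cleaner route, which I would actually prefer, is to avoid coefficients entirely: observe that the map $d\mapsto g_d$ is defined precisely so that $\sum_{d\ge1} g_d\,\psi_d(-)$ is the inverse operation to $h\mapsto \sum_{d\ge1}\tfrac1d\,\psi_d(g\cdot\psi_? (h))$... — but making this precise still amounts to the same identity. So concretely I would: (1) apply $\Psi^{-1}$ to both sides of the hypothesis; (2) use that $\Psi^{-1}\circ\psi_d = \psi_d\circ\Psi^{-1}$ termwise on $T\Lambda[[T]]$, together with $\Psi^{-1}\big(\sum_d g_d\psi_d(F)\big)$, and the defining relation $\psi_n(g)=\sum_{d\mid n} d\,g_d$, to simplify the right-hand side to $g\cdot\Psi^{-1}(\log f_2) = g\cdot\Log f_2$; and (3) conclude, since the left-hand side is $\Psi^{-1}(\log f_1)=\Log f_1$. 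Step (2) is where the one genuine lemma-sized calculation lives, and I expect it to be a half-page of divisor-sum manipulation of exactly the kind already invoked in Remark \ref{LogM} and in the formula for $g_n$.
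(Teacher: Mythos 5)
Your proposal is correct. Note first that the paper gives no proof of Lemma \ref{moz} at all --- it defers entirely to Mozgovoy's paper --- so there is no in-paper argument to compare against; your sketch has to stand on its own, and it does. The second route you say you prefer (apply $\Psi^{-1}$ and collapse the divisor sums) is the right one, and the ``lemma-sized calculation'' you defer in step (2) does go through; the one point to be careful about is that $\Psi^{-1}$ is $\Q$-linear but \emph{not} $\Lambda$-linear (the Adams operations act on the coefficient variables $x_i$ as well as on $T$), so you cannot simply pull the factors $g_d$ outside $\Psi^{-1}$. The cleanest way to organize the computation is to verify the equivalent identity obtained by applying $\Psi$ to the desired conclusion: for any $h\in T\Lambda[[T]]$,
$$
\Psi\bigl(g\cdot\Psi^{-1}(h)\bigr)
=\sum_{n,m\ge 1}\frac{\mu(m)}{nm}\,\psi_n(g)\,\psi_{nm}(h)
=\sum_{d\ge 1}\Bigl(\tfrac1d\sum_{m\mid d}\mu(m)\,\psi_{d/m}(g)\Bigr)\psi_d(h)
=\sum_{d\ge 1}g_d\,\psi_d(h),
$$
using $\psi_n\circ\psi_m=\psi_{nm}$ and the definition of $g_d$. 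Taking $h=\log f_2$, the right-hand side is exactly the hypothesis, so $\log f_1=\Psi\bigl(g\cdot\Log f_2\bigr)$ and applying $\Psi^{-1}$ gives the conclusion. If you insist on running it in your stated direction (applying $\Psi^{-1}$ to the hypothesis), the divisor sum collapses via the identity $\sum_{n\mid N}\frac{\mu(n)}{n}\psi_n(g_{N/n})=\frac{\mu(N)}{N}\,g$, which follows from $\psi_n(g_m)=\frac1m\sum_{e\mid m}\mu(e)\psi_{nm/e}(g)$ together with $\sum_{n\mid t}\mu(n)=[t=1]$ --- the same bookkeeping, just slightly heavier.
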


\subsection{Mixed Poincar\'e series}\label{MPS}

Let $K$ be $\overline{\F}_q$ and choose a prime $\ell$ which does not
divide $q$.  Let $\frak{X}_o$ be an algebraic stack of finite type
defined over $\mathbb{F}_q$, whose lift to $K$ is denoted by
$\frak{X}$. We denote by $H_c^i(\frak{X},\bar\Q_\ell)$ the compactly
supported $i$-th $\ell$-adic cohomology group of $\frak{X}$ as defined
in~\cite{LO1},~\cite{LO2}.

We denote by $F:\frak{X}\rightarrow\frak{X}$ the geometric Frobenius
and by $F^*$ the induced Frobenius on $\ell$-adic cohomology. Let
$W^k_\bullet$ be the weight filtration on
$H_c^k(\frak{X},\bar\Q_\ell)$, i.e. the $F^*$-stable increasing
filtration such that for all integer $n>0$, the eigenvalues of
$(F^*) ^n$ on the subquotient $W^k_m/W^k_{m-1}$ are pure of weight
$nm$.  

We define then the mixed Poincar\'e series of $\frak{X}$ as 
$$
\rmH_c(\frak{X};x,t):=\sum_{k,m}{\rm dim}\, (W^k_m/W^k_{m-1})\, x^{m/2}t^k.
$$
When it is well defined (i.e. when the sum $\sum_k(-1)^k{\rm dim}\,
(W^k_m/W^k_{m-1})$ is finite) we let the $E$-series of $\frak{X}$
be
$$
\E(\frak{X};x):=\rmH_c(\frak{X};x,-1)=\sum_m\sum_k(-1)^k{\rm dim}\,
(W^k_m/W^k_{m-1})\, x^{m/2}. 
$$

\begin{remark}Let $X/\C$ be a separated scheme of finite type over
  $\C$. The compactly supported  cohomology groups $H_c^i((X/\C)^{\rm
    an},\Q)$ carry a  mixed Hodge structure defined by Deligne and so
  one can define the corresponding $E$-polynomial  
$$
\E(X/\C;x,y)=\sum_{i,j}\sum_k(-1)^kh_c^{i,j;k}(X/\C)x^iy^j,
$$
where $\{h_c^{i,j;k}(X/\C)\}_{i,j}$ are the mixed Hodge numbers of $H_c^k((X/\C)^{\rm an},\C)$. 

We then consider
$$
\E(X/\C;x):=E(X/\C;\sqrt{x},\sqrt{x})=\sum_r\sum_k(-1)^k\sum_{i+j=r}h_c^{i,j;k}(X/\C) x^{r/2}.
$$
If $X/\C$ is projective and smooth then the cohomology is pure of
weight $k$, i.e. $h_c^{i,j;k}$ are zero unless $i+j=k$, and so  
$$
\E(X/\C;x)=\sum_k(-1)^k{\rm dim}\, H_c^k(X/\C,\C)\, x^{k/2}.
$$
Let $R$ be a subring of $\C$ which is finitely generated as a
$\Z$-algebra and let $X/R$ be an $R$-scheme of finite type such that
$X/\C$ is obtained from $X/R$ by scalar extension. Then there is an
open subset $U$ of ${\rm Spec}(R)$ for which the following is true :
for any ring homomorphism $\varphi:R\rightarrow\F_q$ such that the
image of ${\rm Spec}(\F_q)\rightarrow{\rm Spec}(R)$ is in $U$ we have
$$
\E(X/\C;x)=\E(X/_\varphi\overline{\F}_q;x).
$$
If $X/\C$ is smooth projective then this is true because $H_c^k(X/\C,\C)$ and $H_c^k(X/_\varphi\overline{\F}_q,\bar  \Q_\ell)$ are pure of weight $k$~\cite[Th\'eor\`eme I.6] {WeilI}.

The general case reduces to the smooth projective case using the fact
that $E$-polynomials are additive with respect to disjoint unions and
that we always have a decomposition    
$$
[X/\C]=[S/\C]-[T/\C]
$$
 in the Grothendieck group of the category of separated $\C$-schemes of finite type  with $S/\C$ and $T/\C$ both projective and smooth~\cite[Appendix, Lemma 6.1.1]{HRV}.
\end{remark}

\begin{theorem} Let $G$ be a connected linear algebraic group over $K$
  acting on a separated scheme $X$ of finite type over $K$. Assume that $X$, $G$
  and the action are all defined over $\F_q$. The $E$-series of the
  quotient stack $[X/G]$ is well-defined and   
$$
\E([X/G];x)=\E(X;x)\, \E({\rm B}(G);x),
$$
where ${\rm B}(G):=[{\rm Spec}(K)/G]$ is the
classifying stack of $G$.
\label{Behrend}\end{theorem}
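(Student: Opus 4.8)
The plan is to combine Poincar\'e duality for ${\rm B}(G)$ with a Leray spectral sequence for the morphism $[X/G]\to{\rm B}(G)$. First I would check that $\E({\rm B}(G);x)$ is well defined and, more precisely, that every $H_c^i({\rm B}(G),\bar\Q_\ell)$ is finite dimensional, pure of weight $i$, and vanishes for $i>-2\dim G$. Since ${\rm B}(G)$ is a smooth algebraic stack of dimension $-\dim G$, Poincar\'e duality in the formalism of~\cite{LO1},~\cite{LO2} reduces this to the statement that $H^*({\rm B}(G),\bar\Q_\ell)$ is, in each degree, finite dimensional and pure of weight equal to the degree. For $G=\Gm$, $G=\Gad$ and $G$ semisimple this is classical ($H^*({\rm B}(G),\bar\Q_\ell)$ is then a polynomial algebra on even-degree generators, pure of weight equal to their degree), and the general connected $G$ is reached by d\'evissage along the unipotent radical, a maximal torus, and a central isogeny of a Levi subgroup, using at each stage a Leray spectral sequence for the corresponding fibration ${\rm B}(N)\to{\rm B}(Q)$ of classifying stacks (and noting that $H_c^*({\rm B}(F),\bar\Q_\ell)=\bar\Q_\ell$ concentrated in degree $0$ for a finite group $F$ of order prime to $\ell$). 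It follows that $\E({\rm B}(G);x)$ is a well-defined function and that, for each fixed weight $m$, only the cohomological degree $i=m$ contributes to it.

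\medskip
\noindent\textbf{Step 2 (the product formula).} Let $f\colon[X/G]\to{\rm B}(G)$ be the morphism classifying the trivial $G$-torsor; it is representable and ${\rm Spec}(K)\times_{{\rm B}(G)}[X/G]=X$. Because $Rf_!$ commutes with arbitrary base change, the pullback of $R^qf_!\bar\Q_\ell$ to the atlas ${\rm Spec}(K)\to{\rm B}(G)$ is $H_c^q(X,\bar\Q_\ell)$, with descent datum the natural $G$-action on this vector space; since $G$ is connected that action is trivial, so $R^qf_!\bar\Q_\ell$ is the constant sheaf with fibre $H_c^q(X,\bar\Q_\ell)$. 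Hence the Leray spectral sequence of $f$ reads
$$
E_2^{p,q}=H_c^p({\rm B}(G),\bar\Q_\ell)\otimes H_c^q(X,\bar\Q_\ell)\ \Longrightarrow\ H_c^{p+q}([X/G],\bar\Q_\ell),
$$
a spectral sequence of mixed $\ell$-adic sheaves, so all its differentials are strictly compatible with the weight filtrations. As $H_c^*(X,\bar\Q_\ell)$ is finite dimensional and, by Step~1, each weight meets $H_c^*({\rm B}(G),\bar\Q_\ell)$ in only one cohomological degree, each weight occurs in only finitely many $E_2^{p,q}$; this already shows $\E([X/G];x)$ is well defined. Taking the alternating sum over total degree, weight by weight, through the (convergent) spectral sequence and using strictness gives $\E([X/G];x)=\E(X;x)\,\E({\rm B}(G);x)$, which is the assertion. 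Applied to $X=G$ with $G$ acting on itself by translation (so that $[G/G]={\rm Spec}(K)$), the same formula also yields $\E({\rm B}(G);x)=1/\E(G;x)$, consistent with $\E({\rm B}(G);q)=1/|G(\F_q)|$.

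\medskip
\noindent\textbf{Main obstacle.} The substance of the argument is foundational rather than computational: one has to develop Poincar\'e duality and the Leray spectral sequence for the non-Deligne--Mumford stacks $[X/G]$ and ${\rm B}(G)$ inside the six-functor and weight formalism for Artin stacks of~\cite{LO1},~\cite{LO2}, verify that the spectral sequence lives in the category of mixed $\ell$-adic sheaves so that its differentials are strict for the weight filtration, and --- because $H_c^*({\rm B}(G),\bar\Q_\ell)$ is unbounded below in degree --- check that each fixed weight meets only finitely many cohomological degrees, which is precisely what makes $\E([X/G];x)$ a well-defined (a priori merely rational) function. The one genuinely group-theoretic input, purity of $H_c^*({\rm B}(G),\bar\Q_\ell)$ for an arbitrary connected $G$ --- in particular for non-special $G$, where the universal torsor is only \'etale-locally trivial --- is what necessitates the d\'evissage to $\Gm$, $\Gad$ and semisimple groups.
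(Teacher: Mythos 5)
Your proposal follows essentially the same route as the paper: the key step in both is the $E_2$ spectral sequence $H_c^p({\rm B}(G),\bar\Q_\ell)\otimes H_c^q(X,\bar\Q_\ell)\Rightarrow H_c^{p+q}([X/G],\bar\Q_\ell)$ attached to the cartesian square over ${\rm B}(G)$ (which the paper imports from Behrend's Lefschetz trace formula paper), combined with finite-dimensionality and purity of $H_c^*({\rm B}(G),\bar\Q_\ell)$. You merely spell out details the paper delegates to the references (purity via Poincar\'e duality and d\'evissage, the weight-by-weight convergence ensuring the $E$-series is well defined, and strictness of the weight filtration), so this is the same argument in expanded form.
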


\begin{proof}We consider the cartesian diagram

$$
\xymatrix{X\ar[r]\ar[d]&[X/G]\ar[d]\\
{\rm Spec}(K)\ar[r]&{\rm B}(G)}
$$
Following the same lines as in~\cite[\S 2.5]{B} with compactly supported cohomology instead, we show that we have an $E_2$ spectral sequence of finite-dimensional  $\bar  \Q_\ell$-vector spaces

$$
H_c^i({\rm B}(G),\bar  \Q_\ell)\otimes H_c^j(X,\bar  \Q_\ell)\Rightarrow H_c^{i+j}([X/G],\bar  \Q_\ell)
$$
which is compatible with the action of $F^*$. The theorem now follows
from the above spectral sequence as in the proof of the Lefschetz
trace formula in~\cite[\S 2.5]{B}.
\end{proof}

\begin{definition}
\label{ratnl-count-defn}
An algebraic stack of finite type $\frak{X}$ defined over
$\mathbb{F}_q$ has \emph{rational count} if there exists a rational
function $Q(t)\in\Q(t)$ such that for all integer $n>0$, we have

$$
\left|\frak{X}(\F_{q^n})\right|=Q(q^n).
$$
\end{definition}

\begin{remark}
Note that if $X$ and $G$ are as in Theorem~\ref{Behrend} then
$$
\left|[X/G](\F_q)\right|=\frac{|X(\F_q)|}{|G(\F_q)|}
$$
and so $[X/G]$ has rational count if and only if $X$ has polynomial
count. Indeed,  it follows that $X$ has rational count and therefore
$|X(\F_{q^r})|=Q(q^r)$ for $r=1,2,\ldots$ for some $Q(t)\in\Q(t)$
independent of $r$.  Since $|X(\F_{q^r})|$ is an integer,  $Q(t)$  must
be a polynomial in $\Q[t]$. 
\end{remark}

\begin{theorem}
\label{ratnl-count}
Let $X$ and $G$ be as in Theorem \ref{Behrend}. If $[X/G]$ has
rational count with counting rational function $Q(t)$, then
$$
\E([X/G];x)=Q(x).
$$
\end{theorem}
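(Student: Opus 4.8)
The plan is to combine Theorem \ref{Behrend}, which already identifies $\E([X/G];x)$ with $\E(X;x)\,\E(\mathrm{B}(G);x)$, with the rational-count hypothesis, and to run the Katz-style argument (as in \cite[Appendix]{HRV}) that relates point counts over finite fields to $E$-polynomials. The main point to establish is that $X$ itself has polynomial count: by the remark following Theorem \ref{Behrend}, $[X/G]$ has rational count if and only if $X$ has polynomial count, say $|X(\F_{q^n})|=P(q^n)$ for a polynomial $P\in\Q[t]$, and then $Q(t)=P(t)/|G|(t)$ where $|G|(t)$ is the (polynomial) order of the finite group of $\F_q$-points of the connected group $G$.

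First I would recall the Katz theorem from \cite[Appendix, Theorem 6.1.2]{HRV}: if a separated $\F_q$-scheme $Y$ of finite type is polynomial count with counting polynomial $P$, then $P$ has integer coefficients and $\E(Y/\C;x)=P(x)$, where $Y/\C$ is any complex model (and by the Remark above in \S\ref{MPS} this agrees with the $\ell$-adic $E$-series $\E(Y;x)$ computed from the weight filtration on $H^*_c$). Applying this to $Y=X$ gives $\E(X;x)=P(x)$. Next I would compute $\E(\mathrm{B}(G);x)$: since $G$ is connected, $|G(\F_{q^n})|$ is a polynomial in $q^n$ — indeed $G$ has polynomial count — of the form $q^{\dim G}\prod_i(1-q^{-d_i})$ for the appropriate degrees $d_i$, so by the same Katz-type reasoning applied to the stack $\mathrm{B}(G)$ (or directly from the known cohomology $H^*_c(\mathrm{B}(G),\bar\Q_\ell)$, which is a polynomial ring on even generators, hence pure) one gets $\E(\mathrm{B}(G);x)=1/|G|(x)$ where $|G|(t)$ is the polynomial with $|G(\F_{q^n})|=|G|(q^n)$. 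Multiplying, Theorem \ref{Behrend} yields
$$
\E([X/G];x)=\E(X;x)\,\E(\mathrm{B}(G);x)=\frac{P(x)}{|G|(x)}.
$$
On the other hand $|[X/G](\F_{q^n})|=|X(\F_{q^n})|/|G(\F_{q^n})|=P(q^n)/|G|(q^n)$, and since this equals $Q(q^n)$ for all $n>0$ with $Q\in\Q(t)$, the two rational functions $Q(t)$ and $P(t)/|G|(t)$ agree on the infinite set $\{q^n\}_{n\ge1}$, hence are equal. Therefore $\E([X/G];x)=Q(x)$, as claimed.

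The step I expect to be the main obstacle is making the passage from point counts to the $E$-series fully rigorous in the stacky setting: one needs that the compatibility between the Frobenius trace formula, the weight filtration, and the complex mixed Hodge theory statement of the Remark in \S\ref{MPS} extends from schemes to the quotient stack $[X/G]$ and to $\mathrm{B}(G)$. This is precisely where one must invoke the $E_2$ spectral sequence of Theorem \ref{Behrend} (and its $F^*$-compatibility) together with the fact that $H^*_c(\mathrm{B}(G),\bar\Q_\ell)$ is pure; once purity of $\mathrm{B}(G)$ and polynomial count of $X$ are in hand, everything else is a formal manipulation of the identity in Theorem \ref{Behrend} and the elementary fact that a rational function is determined by its values on $\{q^n\}_{n\ge1}$.
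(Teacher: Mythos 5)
Your proposal follows the same skeleton as the paper's proof: reduce via Theorem \ref{Behrend} and its multiplicativity $\E([X/G];x)=\E(X;x)\,\E({\rm B}(G);x)$, use the equivalence between rational count of $[X/G]$ and polynomial count of $X$, handle ${\rm B}(G)$ by purity of $H_c^*({\rm B}(G),\bar\Q_\ell)$, and then settle the scheme case. Where you genuinely diverge is in the scheme case. The paper argues directly and $\ell$-adically: it writes the trace formula $|X(\F_{q^r})|=\sum_k(-1)^k\Tr\big((F^*)^r,H_c^k(X,\bar\Q_\ell)\big)$, groups the Frobenius eigenvalues on the graded pieces $W^k_i/W^k_{i-1}$ as $\lambda_{k,i;j}\,q^{i/2}$, and concludes with an elementary lemma (if $\sum_i a_i\lambda_i^r=d$ for all $r>0$ then $\sum_i a_i=d$) that the alternating sum of dimensions in each weight equals the corresponding coefficient of the counting polynomial. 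You instead outsource this step to Katz's theorem from \cite[Appendix]{HRV}, which buys you a shorter write-up but introduces a wrinkle: Katz's theorem concerns the Hodge-theoretic $E$-polynomial of a complex variety equipped with a spreading-out over a finitely generated $\Z$-subalgebra of $\C$, whereas here $X$ is given only over $\F_q$ and $\E(X;x)$ is defined purely $\ell$-adically; a ``complex model'' need not exist, and even when one does, the comparison $\E(X/\C;x)=\E(X/_\varphi\overline{\F}_q;x)$ of the Remark in \S\ref{MPS} is only guaranteed for $\varphi$ landing in an unspecified open subset of $\Spec(R)$, so you would have to check that your given $\F_q$-point qualifies. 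The paper's direct argument avoids both issues and is more elementary. One further small correction: the ``main obstacle'' you flag (extending the formalism to stacks) is already disposed of by Theorem \ref{Behrend}; the real content of this theorem is precisely the scheme-level statement that polynomial count forces the weight-graded Euler characteristics to match the coefficients of $P$, which is where you should concentrate the argument.
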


\begin{proof}By Theorem \ref{Behrend} we are reduced to the prove the theorem in the two following cases :

(i) $G=1$.

(ii) $X$ is a point.

\noindent The case (ii) follows from an explicit computation of the dimension of $H^i_c({\rm B}(G),\bar  \Q_\ell)$ which is pure of weight $i$ (see~\cite{HodgeIII}\cite{B}).

Let us prove (i).  We have for all $r>0$
\begin{equation}
|X(\F_{q^r})|=\sum_k(-1)^k{\rm Tr}\left((F^*)^r,H_c^k(X,\bar  \Q_\ell\right).
\label{x}\end{equation}
Then

\begin{equation}
\sum_k(-1)^k\sum_i{\rm Tr}\left((F^*)^r,W^k_i/W^k_{i-1}\right)=\sum_i\left(\sum_k(-1)^k\sum_{j=1}^{s_{k,i}}(\lambda_{k,i;j})^r\right)q^{ir/2},
\label{vb}\end{equation}
where $\lambda_{k,i;1}q^{i/2},\dots,\lambda_{k,i;s_{k,i}}q^{i/2}$ are the eigenvalues (counted with multiplicities) of $F^*$ on $W^k_i/W^k_{i-1}$. 
\bigskip

If $X$ has polynomial count with counting polynomial $P(T)=\sum_ic_i T^i$, then for all $r>0$ we have

$$
\sum_k(-1)^k\sum_{j=1}^{s_{k,i}}(\lambda_{k,i;j})^r=\begin{cases}c_{i/2} &\text{ if } i \text{ is even,}\\0&\text{ otherwise.}\end{cases} 
$$
The result is now a consequence of the following lemma.

\end{proof}
\begin{lemma}Let $k$ be any field and let 
  $a_1,\dots,a_s,\lambda_1,\dots,\lambda_s\in k^\times$ and $d\in k$ be such that for all integers $r>0$ we
  have 
\begin{equation}
\sum_{i=1}^sa_i\lambda_i^r=d.
\label{id}\end{equation}
Then $\sum_{i=1}^s a_i=d$.
\end{lemma}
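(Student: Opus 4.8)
The plan is to reduce the identity~\eqref{id} to a nonsingular Vandermonde system and then recover the value ``at $r=0$'' by telescoping. Since $k$ is an arbitrary field, I cannot appeal to growth or analytic arguments, so instead I would exploit the linear-algebraic rigidity coming from the distinct, nonzero $\lambda_i$.

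First I would group equal exponentials. Let $\mu_1,\dots,\mu_t$ be the distinct values occurring among $\lambda_1,\dots,\lambda_s$ and set $b_j:=\sum_{i:\,\lambda_i=\mu_j}a_i\in k$. Then~\eqref{id} becomes $\sum_{j=1}^t b_j\mu_j^r=d$ for all $r>0$, the $\mu_j$ are now distinct elements of $k^\times$, and crucially $\sum_{j=1}^t b_j=\sum_{i=1}^s a_i$. Hence it suffices to prove $\sum_j b_j=d$. (After grouping the $b_j$ need no longer be nonzero, but this is harmless for what follows.)

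Next I would take consecutive differences to kill the constant $d$: subtracting the equation for $r$ from that for $r+1$ gives $\sum_{j=1}^t c_j\,\mu_j^r=0$ for every $r\ge 1$, where $c_j:=b_j(\mu_j-1)$. Evaluating this homogeneous relation at $r=1,\dots,t$ produces a linear system in the $c_j$ with coefficient matrix $M=(\mu_j^{\,r})_{1\le r,j\le t}$. Factoring $\mu_j$ out of the $j$-th column exhibits $\det M=\big(\prod_j\mu_j\big)\prod_{i<j}(\mu_j-\mu_i)$, which is nonzero precisely because the $\mu_j$ are distinct and lie in $k^\times$; both hypotheses on the $\lambda_i$ are used here, and the argument is insensitive to the characteristic of $k$. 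Thus $M$ is invertible and $c_j=0$ for all $j$.

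Finally, once every $c_j=0$ the difference relation $\sum_j c_j\mu_j^r=0$ holds for $r=0$ as well, which reads $\sum_j b_j(\mu_j-1)=0$, i.e. $\sum_j b_j\mu_j=\sum_j b_j$. Comparing with the $r=1$ instance $\sum_j b_j\mu_j=d$ of~\eqref{id} yields $\sum_j b_j=d$, and therefore $\sum_i a_i=d$, as desired. The only point requiring real care is the grouping step, since the Vandermonde determinant is nonzero only after the exponentials are made distinct; once that reduction is in place the remaining conclusion is forced.
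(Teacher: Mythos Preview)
Your proof is correct. The grouping step is indeed essential, the Vandermonde determinant computation is right (the $\mu_j$ being both distinct and nonzero is exactly what is needed), and the final telescoping from $c_j=0$ to $\sum_j b_j=d$ is clean.

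The paper, however, takes a completely different and much shorter route: it packages the hypothesis into an identity of formal power series
\[
\sum_i \frac{a_i\lambda_i T}{1-\lambda_i T}=\sum_{r\ge 1}\Bigl(\sum_i a_i\lambda_i^r\Bigr)T^r
=\sum_{r\ge 1} d\,T^r=\frac{dT}{1-T},
\]
which is then an identity of rational functions in $T$ over $k$. Substituting $T\mapsto 1/t$ and setting $t=0$ (equivalently, comparing the behavior at $T=\infty$) gives $-\sum_i a_i=-d$. No grouping, no Vandermonde, no case analysis on whether some $\mu_j=1$ is needed.

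What each approach buys: the paper's generating-function trick is slicker and scales effortlessly; your argument is more hands-on and makes explicit the structural reason the conclusion holds, namely that after killing the constant term the remaining exponential sum is forced to vanish identically by Vandermonde rigidity. Both are characteristic-free, but yours uses the full hypothesis $\lambda_i\in k^\times$ twice (distinctness after grouping \emph{and} nonvanishing for the shifted Vandermonde), whereas the paper's proof only needs the $\lambda_i$ to avoid the pole at $T=\infty$, which is automatic.
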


\begin{proof} From (\ref{id}) we get an identity 

$$
\sum_i\frac{a_i\lambda_i T}{1-\lambda_i T}=\frac{dT}{1-T}
$$
of rational functions in $T$. Letting $T\mapsto\infty$ (or replacing $T$ by $1/t$ and plugging $t=0$) yields the result.
\end{proof}

\subsection{Mass formula}

Consider the group $\Pi$ with presentation
$$
\Pi=\left\langle a_1,\dots,a_r,x_1,\dots,x_k\,\left|\, a_1^2\cdots a_r^2x_1\cdots x_k=1\right\rangle\right..
$$

Let $\G=\GL_n(\overline{\F}_q)$, let $\sigma:\G\rightarrow \G$ be the
involution $g\mapsto {^t}g^{-1}$ and consider the semi-direct product
$\G^+=\G\rtimes\langle \sigma\rangle$.  Consider on $\G$ the
$\F_q$-structure induced by the Frobenius that raises coefficients of
matrices to their $q$-th power and fix a $k$-tuple
$\calC=(C_1,\dots,C_k)$ of conjugacy classes of $\G(\F_q)$. For
$\varepsilon=(\varepsilon_1,\dots,\varepsilon_r)\in \langle
\sigma\rangle^r$ consider the representation variety
$$
\Hom_\calC^\varepsilon(\Pi,\G^+(\F_q)):=\left\{\rho\in
  \Hom(\Pi,\G^+(\F_q))\,\left|\, \epsilon(\rho(a_i))=\varepsilon,\,
    \rho(x_j)\in \iota(C_j)\text{ for all }1\leq i \leq r\text{ and }1\leq
    j\leq k\right\}\right., 
$$
where $\epsilon:\G^+\rightarrow \langle \sigma\rangle$ is the quotient
map and $\iota:G\rightarrow G^+$ the  natural  inclusion.

Recall that for an irreducible complex character $\chi$ of some finite
group $U$, the Schur indicator $c_\chi\in\{-1,0,1\}$ is defined as  
$$
c_\chi:=\frac{1}{|U|}\sum_{u\in U}\chi(u^2).
$$
An irreducible character of $U$ is afforded by a \emph{real} representation
(we call the character \emph{real})
if and only if $c_\chi=1$. We denote by $\widehat{U}$ the set of
irreducible complex characters of $U$ and by $\widehat{U}_{\rm real}$
the subset of real irreducible characters.  It is
know~\cite{Gow} that for $U=\G(\F_q)$ the Schur indicator $c_\chi$
is either~$0$ or $1$. Frobenius and Schur~\cite[(9),
p.197]{Frobenius-Schur} proved the mass formula 
\begin{equation}
\label{Frobenius-Schur-orig}
\frac1{|U|} 
|\{a_1^2\cdots a_r^2=1\}|=\sum_\chi c_\chi\left(\frac{|U|}{\chi(1)}\right)^{r-2},
\end{equation}
where the sum is over all irreducible characters of $U$.

We need the following generalization of~\eqref{Frobenius-Schur-orig}
for our setting. 
\begin{theorem}[Mass formula]
We have
\begin{equation}
\frac{\left|
    \Hom_\calC^\varepsilon(\Pi,\G^+(\F_q))\right|}{|\G(\F_q)|}=  \begin{cases}
\sum_{\chi\in\widehat{G(\F_q)}}\left(\frac{|\G^F|}{\chi(1)}\right)^{r-2}\prod_{i=1}^k\frac{\chi(C_i^F)\,       
    |C_i^F|}{\chi(1)} \text{ if } \varepsilon=\sigma \text{ for all
  }i,\\ 
\sum_{\chi\in\widehat{G(\F_q)}_{\rm real}}\left(\frac{|\G^F|}{\chi(1)}\right)^{r-2}\prod_{i=1}^k\frac{\chi(C_i^F)\, |C_i^F|}{\chi(1)} \text{ if } \varepsilon=1 \text{ for some }i.
\end{cases}
\label{Mass}\end{equation}

\label{Masstheo}
\end{theorem}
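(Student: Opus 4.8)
The plan is to count $\Hom_\calC^\varepsilon(\Pi,\G^+(\F_q))$ by a standard character-theoretic argument, exploiting the fact that $\G^+(\F_q)$ is a finite group with a normal subgroup $\G(\F_q)$ of index $2$. Write $U = \G(\F_q)$ and $U^+ = \G^+(\F_q)$. A representation $\rho\colon \Pi \to U^+$ with $\epsilon(\rho(a_i)) = \varepsilon_i$ and $\rho(x_j) \in \iota(C_j)$ is the same as a tuple $(A_1,\dots,A_r,X_1,\dots,X_k)$ with $A_i$ in the coset of $U^+$ determined by $\varepsilon_i$, $X_j \in C_j$, subject to the single relation $A_1^2\cdots A_r^2 X_1\cdots X_k = 1$. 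So we must evaluate
$$
N := \#\left\{(A_i),(X_j) : A_i \in U^{\varepsilon_i},\ X_j \in C_j,\ A_1^2\cdots A_r^2 X_1\cdots X_k = 1\right\},
$$
where $U^+ = U^{1} \sqcup U^{\sigma}$ is the coset decomposition. The idea is to use the orthogonality relation $\frac1{|U^+|}\sum_{\theta \in \widehat{U^+}} \theta(g)\overline{\theta(h)} = \delta_{g,h}$ (up to conjugacy) — equivalently, insert $\frac{1}{|U^+|}\sum_\theta \theta(1)\sum_{g}\theta(g^{-1}A_1^2\cdots X_k)\cdots$ — and then sum over each variable independently. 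Summing $X_j$ over the class $C_j$ contributes the familiar factor $|C_j|\,\theta(C_j)/\theta(1)$; summing $A_i$ over the coset $U^{\varepsilon_i}$ of the squaring-type relation contributes a twisted Schur-indicator factor $\tfrac{1}{\theta(1)}\sum_{a \in U^{\varepsilon_i}}\theta(a^2 g)$, which by Frobenius–Schur-type reasoning collapses onto a sign times $|U|/\theta(1)$ when restricted appropriately.

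The key technical point is the structure of $\widehat{U^+}$ relative to $\widehat{U}$ via Clifford theory for the index-$2$ subgroup, and the behaviour of the coset sums $\sum_{a \in U^{\sigma}}\theta(a^2 g)$ and $\sum_{a \in U^{1}}\theta(a^2 g) = \sum_{a\in U}\theta(a^2g)$. For the case $\varepsilon = \sigma$ (all $i$), I expect the coset sum $\frac{1}{|U^+|}\sum_{a\in U^\sigma}\theta(a^2g)$ to pick out, for each $\theta \in \widehat{U^+}$, a quantity that depends only on the restriction $\chi = \Res^{U^+}_U \theta$; since every $\chi \in \widehat{U}$ has $c_\chi \in \{0,1\}$ and the nonzero ones are exactly the $\sigma$-invariant $\chi$ that extend to $U^+$, one should get precisely the factor $|U|/\chi(1)$ with no constraint, yielding the sum over all $\chi \in \widehat{U}$ as in the first line of~\eqref{Mass}. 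For the case $\varepsilon_i = 1$ for some $i$, that $a_i$ ranges over $U$ itself and the sum $\sum_{a\in U}\theta(a^2 g)$ forces the usual Schur-indicator constraint $c_\chi = 1$, restricting the sum to $\widehat{U}_{\mathrm{real}}$; the remaining $\sigma$-coset variables (if any) still contribute $|U|/\chi(1)$ each. Combining all factors, the total number of relation variables is $r$, two of which are "absorbed" into the $|U^+|$/character orthogonality bookkeeping (the base-point freedom and one from the relation), leaving the exponent $r-2$ on $|\G^F|/\chi(1)$, exactly matching~\eqref{Frobenius-Schur-orig} in the specialization $k = 0$, $\varepsilon = 1$.

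The main obstacle will be making the coset-sum computation precise: one must verify that for $\theta \in \widehat{U^+}$ the quantity $\frac{1}{|U|}\sum_{a \in U^\sigma}\theta(a^2 g)$, as a central function of $g \in U^+$, is a scalar multiple of $\theta(g)$ with the scalar equal to $\pm 1$ (a "twisted Frobenius–Schur indicator" for the coset $U^\sigma$), and then identify that scalar and the set of $\theta$ for which it is nonzero in terms of the restriction to $U$. This is where one invokes that $c_\chi \in \{0,1\}$ for $\chi \in \widehat{\G(\F_q)}$ (a result of Gow~\cite{Gow}) together with the elementary fact that a $\sigma$-stable $\chi$ extends to $U^+$ in two ways differing by the sign character of $U^+/U$, while a $\sigma$-unstable $\chi$ induces irreducibly. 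Once the coset contributions are pinned down, assembling the two cases and re-indexing the sum over $\widehat{U^+}$ as a sum over $\widehat{U}$ (or $\widehat{U}_{\mathrm{real}}$) is bookkeeping; I would organize it so that the $k = 0$, $\varepsilon = 1^r$ case visibly reduces to~\eqref{Frobenius-Schur-orig}, thereby confirming the normalization.
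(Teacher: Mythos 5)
Your overall strategy---expand the delta function at the identity, sum each generator's contribution separately, and reduce to (twisted) Frobenius--Schur-type indicators---has the right shape, and your bookkeeping for the exponent $r-2$ and the class-sum factors $|C_j|\,\chi(C_j)/\chi(1)$ is correct. But the pivotal step, which you explicitly defer (``the main obstacle''), is exactly where the content of the theorem lives, and the inputs you propose for resolving it are not sufficient. For the $\varepsilon=\sigma$ factors one must evaluate $\eta^\sigma(y):=\#\{a\in\G(\F_q)\,:\,a\sigma(a)=y\}$ (note $(a\sigma)^2=a\sigma(a)\in\G$), and the assertion that each such factor contributes $|\G(\F_q)|/\chi(1)$ for \emph{every} $\chi\in\widehat{\G(\F_q)}$, with no sign and no vanishing, is precisely Gow's theorem that $\eta^\sigma=\sum_{\chi}\chi$---equivalently, that the $\sigma$-twisted Frobenius--Schur indicator $\frac1{|\G(\F_q)|}\sum_a\chi(a\sigma(a))$ equals $1$ for all irreducible $\chi$. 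By the general (Kawanaka--Matsuyama) theory this indicator is a priori only in $\{-1,0,1\}$: it vanishes unless $\chi\circ\sigma=\bar\chi$ and can equal $-1$, so ``collapses onto a sign times $|U|/\theta(1)$'' leaves open exactly the possibilities that would break the first line of~\eqref{Mass}. The only Gow input you cite is $c_\chi\in\{0,1\}$ for the \emph{ordinary} indicator; that handles the $\varepsilon_i=1$ factors (yielding the restriction to $\widehat{U}_{\rm real}$, since $c_\chi^m=c_\chi$) but says nothing about the twisted coset sums, and it cannot be parlayed into the twisted statement by ``elementary'' Clifford theory alone.

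Separately, routing the argument through $\widehat{U^+}$ and Clifford theory for the index-$2$ subgroup is an avoidable detour: since $A_i^2\in U$ for $A_i$ in either coset, the entire relation lives in $U=\G(\F_q)$. The paper's proof stays inside the convolution algebra of class functions on $U$: it writes $\left|\Hom_\calC^\varepsilon(\Pi,\G^+(\F_q))\right|=(\eta^{\varepsilon_1}*\cdots*\eta^{\varepsilon_r}*1_{C_1}*\cdots*1_{C_k})(1)$, applies the transform $\calF(f)(\chi)=\sum_g f(g)\chi(g)/\chi(1)$, which converts $*$ into pointwise product, and then substitutes $\calF(\eta^\sigma)(\chi)=|\G(\F_q)|/\chi(1)$ (Gow) and $\calF(\eta^1)(\chi)=c_\chi\,|\G(\F_q)|/\chi(1)$ (Frobenius--Schur). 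If you supply the twisted-indicator theorem for $\GL_n(\F_q)$, your Clifford-theoretic version can be completed, but as written the case $\varepsilon=\sigma$ is not proved.
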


\begin{proof}
Let $\calC(\G(\F_q))$ denote the $\C$-vector space of central functions $\G(\F_q)\rightarrow\C$. It is equipped with a convolution product $*$ defined as 
$$
(f_1*f_2)(g)=\sum_{xy=g}f_1(x)f_2(y),
$$
for $f_1,f_2\in\calC(\G(\F_q))$ and $g\in \G(\F_q)$.

Define the function $\eta^\varepsilon:\G(\F_q)\rightarrow\C$ by  
$$
\eta^\varepsilon(y)=\#\{(a_1,\dots,a_r)\in (\G(\F_q))^r\,|\, E_{\varepsilon_1}(a_1)\cdots E_{\varepsilon_r}(a_r)=y\}
$$
where $E_1(a):=a^2$ and $E_\sigma(a)=a\sigma(a)$.Then 
\begin{equation}
\left|\Hom_\calC^\varepsilon(\Pi,\G^+(\F_q))\right|=(\eta^\varepsilon*1_{C_1}*\cdots* 1_{C_k})(1),
\label{form}\end{equation}
where $1_{C_i}$ denotes the function on $\G(\F_q)$ that takes the value $1$ on elements of $C_i$ and $0$ elsewhere.
 
 Denote by ${\rm F}(\widehat{\G(\F_q)})$ be the $\C$-vector space of complex valued functions on $\widehat{\G(\F_q)}$ and let $\calF:{\rm F}(\G(\F_q))\rightarrow {\rm F}(\widehat{\G(\F_q)})$ be defined by 
  $$
 \calF(f)(\chi)=\sum_{g\in\G(\F_q)}\frac{f(g)\chi(g)}{\chi(1)}.
 $$
 It satisfies $\calF(f*g)=\calF(f)\cdot\calF(g)$ where $\cdot$ denotes the pointwise multiplication on ${\rm F}(\widehat{\G(\F_q)})$.
 
 By~\cite[Proposition 3.1.1]{HLRV}, for all $f\in\calC(\G(\F_q))$ we have 
 
 $$
 f(1)=\frac{1}{|\G(\F_q)|}\sum_{\chi\in\widehat{\G(\F_q)}}\chi(1)^2\calF(f)(\chi).
 $$
 We thus deduce from (\ref{form}) that
 
 $$
 \frac{\left|\Hom_\calC^\varepsilon(\Pi,\G^+(\F_q))\right|}{|\G(\F_q)|}=\sum_{\chi\in\widehat{\G(\F_q)}}\frac{\chi(1)^2}{|\G(\F_q)|^2}\, \calF(\eta^\varepsilon)(\chi)\prod_{i=1}^k\frac{\chi(C_i)\,|C_i|}{\chi(1)}.
 $$
 On the other hand we have $\eta^\varepsilon=\eta^{\varepsilon_1}*\cdots*\eta^{\varepsilon_r}$ where
 
 $$
 \eta^{\varepsilon}(y):=|\{a\in \G(\F_q)\,|\, E_{\varepsilon}(a)=y\}|.
 $$
Hence 
 $$
 \frac{\left|\Hom_\calC^\varepsilon(\Pi,\G^+(\F_q))\right|}{|\G(\F_q)|}=\sum_{\chi\in\widehat{\G(\F_q)}}\frac{\chi(1)^2}{|\G(\F_q)|^2}\, \prod_{i=1}^r\calF(\eta^{\varepsilon})(\chi)\prod_{i=1}^k\frac{\chi(C_i)\,|C_i|}{\chi(1)}.
 $$
By ~\cite[Theorem 3]{Gow} we have
 $$
 \eta^\sigma(y)=\sum_{\chi\in\widehat{\G(\F_q)}}\chi(y)
 $$
 for all $y\in \G(\F_q)$ and so
  $$
 \calF(\eta^\sigma)(\chi)=\frac{|\G(\F_q)|}{\chi(1)}.
 $$
 The claim now follows from
  $$
 \calF(\eta^1)(\chi)=\frac{|\G(\F_q)|}{\chi(1)}\, c_\chi.
 $$
\end{proof}

The case $r=k=1,\varepsilon=\sigma$ gives the formula
$$
|\{x\in \GL_n(\F_q)\, |, xx^\sigma=h\}|=\sum_\chi\chi(h)
$$
(see~\cite{FG},~\cite{Gow}).
\section{Character varieties of non-orientable surfaces}

We keep the notation of \S \ref{charactervar} with
$K=\overline{\F}_q$.
We also assume that $S$ is empty (no punctures) so that
$$
\Pi=\left\langle a_1,\dots,a_r\,\left| a_1^2\cdots a_r^2=1\right\rangle\right..
$$
We consider on $\G$, the $\F_q$-structure induced by the Frobenius
that raises coefficients of matrices to their $q$-th power.  

In this section, we prove that the  quotient stack

$$
\calM=\left[\rm{Hom}(\Pi,\G)/\G\right]
$$
has rational count and we compute its $E$-series by counting points
over finite fields thanks to Theorem~\ref{ratnl-count}.
\subsection{Case $r=1$: involutions}
\label{involutions}

For $n\in \N$ let $I_n(q)$ be the number of involutions in
$\G(\F_q)$, i.e.,
$$
I_n(q):=|\{x \in \G(\F_q) \,|\, x^2=1\}|.
$$
Throughout $q$ will be the power of an odd prime unless otherwise
noted. We have, using standard notation
$$
{n\brack r}:=\frac{(q)_n}{(q)_r(q)_{n-r}}
$$
with $(a)_n:=\prod_{k=0}^{n-1}(1-aq^k)$, the following.
\begin{proposition}
\begin{equation}
\label{I-formula}
I_n(q)=\sum_{r=0}^n q^{r(n-r)}\,{n\brack r}
\end{equation}
\end{proposition}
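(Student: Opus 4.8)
The plan is to count involutions in $\GL_n(\F_q)$ by classifying them up to conjugacy according to the dimension of the eigenspaces. Since $q$ is odd, an element $x$ with $x^2 = 1$ is semisimple with eigenvalues in $\{1,-1\}$, hence determined up to conjugacy by $r := \dim\ker(x-1)$, the dimension of the $+1$-eigenspace; the $-1$-eigenspace then has dimension $n-r$. So $I_n(q) = \sum_{r=0}^n N_{n,r}(q)$ where $N_{n,r}(q)$ is the number of such $x$ with a fixed pair of eigenspace dimensions $(r, n-r)$. The set of such $x$ is a single $\GL_n$-conjugacy class, namely $\GL_n/ (\GL_r \times \GL_{n-r})$ as a homogeneous space (the stabilizer being the centralizer, a block-diagonal $\GL_r \times \GL_{n-r}$ sitting inside $\GL_n$ with respect to a chosen splitting). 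Hence one is reduced to the purely combinatorial identity
$$
\frac{|\GL_n(\F_q)|}{|\GL_r(\F_q)|\,|\GL_{n-r}(\F_q)|} = q^{r(n-r)}\,{n\brack r}.
$$

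First I would make this last identity explicit. Writing $|\GL_m(\F_q)| = q^{\binom m2}\prod_{i=1}^m (q^i-1) = q^{\binom m2}(q;q)_m / (1-q)^m$ — or, more directly, using $(q)_m := \prod_{k=0}^{m-1}(1-q\cdot q^k)$ as in the statement — one has $|\GL_m(\F_q)| = q^{\binom m 2} (-1)^m (q)_m$ up to the obvious bookkeeping of signs and powers of $q$; in any case the ratio $|\GL_n|/(|\GL_r|\,|\GL_{n-r}|)$ equals $q^{\binom n2 - \binom r2 - \binom{n-r}2}$ times $(q)_n/((q)_r (q)_{n-r}) = {n\brack r}$. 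The exponent simplifies: $\binom n2 - \binom r2 - \binom{n-r}2 = r(n-r)$, which is the elementary identity $\binom{a+b}{2} = \binom a2 + \binom b2 + ab$. This gives exactly $N_{n,r}(q) = q^{r(n-r)}{n\brack r}$, and summing over $r$ yields~\eqref{I-formula}.

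The only genuine point requiring care — and the step I expect to be the main obstacle to state cleanly rather than to prove — is the claim that the set $\{x : x^2=1,\ \dim\ker(x-1)=r\}$ is a single orbit of size $|\GL_n(\F_q)|/|\GL_r(\F_q) \times \GL_{n-r}(\F_q)|$ over $\F_q$, i.e. that the orbit and its stabilizer are both "defined over $\F_q$" in the naive counting sense. This is standard: over $\overline{\F}_q$ diagonalizability gives a single $\GL_n$-orbit with connected stabilizer $\GL_r \times \GL_{n-r}$, and by Lang's theorem the $\F_q$-points of this orbit form a single $\GL_n(\F_q)$-orbit (connectedness of the stabilizer kills $H^1$), so its cardinality is $|\GL_n(\F_q)|/|(\GL_r\times\GL_{n-r})(\F_q)|$. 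Alternatively, and more elementarily, one can just count directly: choose the $r$-dimensional $+1$-eigenspace as an $\F_q$-subspace of $\F_q^n$ — there are ${n \brack r}$ of these — and the $-1$-eigenspace as a complement, of which there are $q^{r(n-r)}$; this reproves the orbit-counting statement without invoking Lang's theorem and makes the factor $q^{r(n-r)}{n\brack r}$ transparent. I would present the argument this second way.
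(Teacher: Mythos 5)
Your proposal is correct and follows essentially the same route as the paper: classify involutions by the dimension $r$ of the $+1$-eigenspace and compute the size of each resulting conjugacy class, which the paper dispatches with ``a calculation shows.'' Your explicit verification of $|\GL_n(\F_q)|/(|\GL_r(\F_q)|\,|\GL_{n-r}(\F_q)|)=q^{r(n-r)}{n\brack r}$, and the alternative direct count via choosing the two eigenspaces, simply fill in that calculation.
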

\begin{proof}
If $x\in \G(\F_q)$ is such that $x^2=1$ then it must be conjugate
to a diagonal matrix with, say, $r$ eigenvalues equal to $1$ and $n-r$
eigenvalues equal to $-1$. A calculation shows that the size of this
conjugacy class is $q^{r(n-r)}\,{n\brack r}$.
\end{proof}
In particular, $I_n$ is a polynomial in $q$ with non-negative integer
coefficients (see ~\S\ref{involutions} for the first few values).

\begin{remark}
\label{div-by-2}
i) Note that the map $x\mapsto -x$ induces an involution of the variety
$\calU:=\{x\in\G\,|\,x^2=1\}$ permuting the conjugacy classes of
semisimple elements with eigenvalues in $\{-1,1\}$. When $n$ is odd
this involution does not fix any of these conjugacy classes. This
explains why the polynomials $I_n(q)$, with $n$ odd, are divisible by~$2$.

ii) These conjugacy classes are the irreducible components of the
affine variety $\Hom(\pi_1(\Sigma),GL_n(\C))$,  where $\Sigma$ is
the real projective plane (\cite[Thm 2.1]{BKCh}).
\end{remark}

Note that $\calU=\Hom(\pi_1(\Sigma),\G(\F_q))$. Comparison to the untwisted, orientable
case discussed in~\cite[\S3.8]{HRV} suggests considering the generating series
$$
\sum_{n\geq 0} \frac{q^{n^2/2}I_n(q)}{|\G(\F_q)|}\,T^n.
$$
To avoid dealing with powers of $\sqrt q$ we consider instead
\begin{equation}
\label{I-defn}
I(q,T):=\sum_{n\geq 0} \frac{(-1)^nq^{\binom n2}I_n(q)}{|\G(\F_q)|}\,T^n,
\end{equation}
which amounts to shifting $T$ by a factor of $\sqrt q$; the factor of
$(-1)^n$ simplifies later formulas. Alternatively,
\begin{equation}
I(q,T)=\sum_{n\geq 0}
\frac{I_n(q)}{(q)_n}\,T^n,
\end{equation}
since
\begin{equation}
|\G(\F_q)|=(-1)^nq^{\binom n2}\,(q)_n.
\end{equation}

It follows from the Mass formula~\eqref{Masstheo}  that
\begin{equation}
I_n(q)=\sum_{\chi} \chi(1), 
\end{equation}
where the sum is over the real irreducible characters of
$\G(\F_q)$. Hence we also have
\begin{equation}
\label{I-formula-1}
I(q,T)=\sum_{n\geq 0}\sum_{\chi\,{\rm real}}\,
\frac{\chi(1)}{|\G(\F_q)|}\,(-1)^nq^{\binom n2}T^n.
\end{equation}

\begin{proposition}
\label{I-identity}
The following identity holds
\begin{equation}
\label{Log-I-fmla}
(q-1)\,\Log\left( I(q,T)\right) = -2T+T^2
\end{equation}
or equivalently
\begin{equation}
I(q,T)=\prod_{n\geq 0} \frac{(1-q^nT^2)}{(1-q^nT)^2}.
\end{equation}
\end{proposition}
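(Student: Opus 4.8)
The plan is to establish the closed product formula $I(q,T)=\prod_{n\ge 0}(1-q^nT^2)/(1-q^nT)^2$ first, by elementary $q$-series manipulations, and then to read off the $\Log$-identity from it term by term. The two formulations are equivalent: from the definitions of $\Psi$, $\Exp$ and $\Log$ recalled earlier one has $\Psi(q^aT^b)=\sum_{m\ge1}q^{am}T^{bm}/m=-\log(1-q^aT^b)$ for $b\ge 1$, hence $\Exp(q^aT^b)=(1-q^aT^b)^{-1}$ and $\Log\big((1-q^aT^b)^{-1}\big)=q^aT^b$; together with additivity of $\Log$ (extended to convergent infinite products) and the expansion $1/(q-1)=-\sum_{n\ge 0}q^n$ this shows that $(q-1)\Log(I(q,T))=-2T+T^2$ is precisely the termwise $\Log$ of the product.

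For the product formula, the key move is to rewrite $I(q,T)$ as a disguised product of $q$-exponentials. Using \eqref{I-formula} together with ${n\brack r}=(q)_n/\big((q)_r(q)_{n-r}\big)$, the series $I(q,T)=\sum_{n\ge0}\tfrac{I_n(q)}{(q)_n}T^n$ becomes the double sum $\sum_{r,s\ge0}\tfrac{q^{rs}}{(q)_r(q)_s}\,T^{r+s}$. Summing over $s$ for fixed $r$ and applying Euler's identity $\sum_{s\ge0}z^s/(q)_s=1/(z)_\infty$ (with $(z)_\infty:=\prod_{k\ge0}(1-zq^k)$ and $z=q^rT$) gives $I(q,T)=\sum_{r\ge0}\tfrac{T^r}{(q)_r}\cdot\tfrac1{(q^rT)_\infty}$; the telescoping $(q^rT)_\infty=(T)_\infty/(T)_r$ then lets me factor out $1/(T)_\infty$, leaving $I(q,T)=\tfrac1{(T)_\infty}\sum_{r\ge0}\tfrac{(T)_r}{(q)_r}T^r$. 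The surviving sum is the $q$-binomial theorem $\sum_{r\ge0}\tfrac{(a)_r}{(q)_r}z^r=(az)_\infty/(z)_\infty$ evaluated at $a=z=T$, namely $(T^2)_\infty/(T)_\infty$, so $I(q,T)=(T^2)_\infty/(T)_\infty^2$, which is the asserted product. (Equivalently one may quote in a single step the classical identity $\sum_{r,s\ge0}\tfrac{q^{rs}}{(q)_r(q)_s}x^ry^s=(xy)_\infty/\big((x)_\infty(y)_\infty\big)$ and set $x=y=T$.)

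It then remains only to apply $\Log$ to the product factor by factor: $\Log\big((1-q^nT)^{-2}\big)=2q^nT$ and $\Log(1-q^nT^2)=-q^nT^2$, whence $\Log(I(q,T))=\sum_{n\ge0}(2T-T^2)q^n=(2T-T^2)/(1-q)$, i.e. $(q-1)\Log(I(q,T))=-2T+T^2$. I do not anticipate a genuine obstacle here; the substance is the bookkeeping in the middle paragraph — setting up the double sum, the telescoping $(q^rT)_\infty=(T)_\infty/(T)_r$, and recognising the leftover series as a $q$-binomial evaluation. The one point that deserves a word of care is the ambient ring: all the infinite products converge $q$-adically in $\Q[[q]][[T]]$ (the $n$-th factor is $\equiv 1 \bmod q^n$, and $\Log$ preserves this filtration, so termwise $\Log$ is legitimate), and $I(q,T)$ lies there because each $(q)_n$ is a unit in $\Q[[q]]$; the $\Log$-form of the identity moreover holds verbatim in $\Q(q)[[T]]$.
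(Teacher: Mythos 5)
Your proof is correct and follows essentially the same route as the paper: both reduce $I(q,T)$ to the double sum $\sum_{r,s\ge 0}q^{rs}X^rY^s/\bigl((q)_r(q)_s\bigr)$ and evaluate it via the $q$-binomial theorem together with Euler's formula $(u)_\infty^{-1}=\sum_{n\ge 0}u^n/(q)_n$ (the paper states this as the two-variable Proposition~\ref{I*-identity} and specializes $X=Y=T$, which you mention only as an aside). Your explicit check that the product form and the $\Log$ form are equivalent, and the remark on $q$-adic convergence in $\Q[[q]][[T]]$, are fine and slightly more careful than the paper, which treats these points as routine.
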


The identity~\eqref{Log-I-fmla} will follow from a more general formula
that we now describe. Using \eqref{I-formula} we have
$$
I(q,T)=\sum_{n\geq 0} \sum_{r=0}^n \frac{q^{r(n-r)}}{(q)_r(q)_{n-r}}T^n.
$$
This suggests that we introduce another variable and
consider the series
\begin{equation}
I^*(q,X,Y):=\sum_{r\geq 0}\sum_{s\geq 0} \frac{q^{rs}}{(q)_r(q)_s}\,X^rY^s,
\end{equation}
with 
$$
I(q,T)=I^*(q,T,T).
$$
The following generalization of Proposition \ref{I-identity} holds.
\begin{proposition}
\label{I*-identity}
\begin{equation}
(q-1)\,\Log\left( I^*(q,X,Y)\right) = -X-Y+XY
\end{equation}
or equivalently
\begin{equation}
I^*(q,X,Y)=\prod_{n\geq 0} \frac{(1-q^nXY)}{(1-q^nX)(1-q^nY)}.
\end{equation}
\end{proposition}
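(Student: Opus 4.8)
**Proof proposal for Proposition~\ref{I*-identity}.**

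The plan is to compute $\Log\left(I^*(q,X,Y)\right)$ directly by exploiting the explicit product over $r,s$ of $q$-factorials and the multiplicativity of $\Log$ and $\Exp$. The key observation is that the $q$-analogue of the exponential, namely $e_q(X):=\sum_{r\geq 0}\frac{X^r}{(q)_r}$, admits the infinite product expansion $e_q(X)=\prod_{n\geq 0}(1-q^nX)^{-1}$ (the $q$-binomial theorem / Euler), so that $\log e_q(X)=-\sum_{n\geq 0}\log(1-q^nX)=\sum_{n\geq 0}\sum_{m\geq 1}\frac{q^{nm}X^m}{m}=\sum_{m\geq 1}\frac{X^m}{m(1-q^m)}$. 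Applying $\Psi^{-1}$ termwise, using that $\psi_d$ sends $q\mapsto q^d$ and $X\mapsto X^d$, gives $\Log\, e_q(X)=\frac{1}{q-1}\cdot(-X)$ after the M\"obius sum collapses; equivalently one can simply say $(q-1)\Log\,e_q(X)=-X$ since $\Log$ of a product of two elementary series is the sum of the $\Log$'s and $\Log(\prod_n f(q^nX))$ telescopes under the Adams operations. This handles the ``separated'' factors.

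The heart of the matter is the cross term $q^{rs}$. The plan is to write
$$
I^*(q,X,Y)=\sum_{r,s\geq 0}\frac{q^{rs}X^rY^s}{(q)_r(q)_s}
$$
and to recognize, via the $q$-Mehler / Rogers-type summation (or directly via the $q$-binomial theorem summing first over one index), that this has a closed product form. Concretely, summing over $s$ for fixed $r$ using $\sum_{s\geq 0}\frac{(q^rY)^s}{(q)_s}=\prod_{n\geq 0}(1-q^{n+r}Y)^{-1}=e_q(Y)\prod_{n=0}^{r-1}(1-q^nY)$, one gets $I^*(q,X,Y)=e_q(Y)\sum_{r\geq 0}\frac{X^r}{(q)_r}\prod_{n=0}^{r-1}(1-q^nY)$, and the remaining sum is again summable by the ($q$-)binomial theorem to $\prod_{n\geq 0}\frac{1-q^nXY}{1-q^nX}$. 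Combining yields the claimed product
$$
I^*(q,X,Y)=\prod_{n\geq 0}\frac{1-q^nXY}{(1-q^nX)(1-q^nY)}.
$$

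From the product form the $\Log$ identity is immediate: $\Log$ is additive over products, and for each elementary factor $(1-q^nZ)^{\pm 1}$ with $Z\in\{X,Y,XY\}$ one has $(q-1)\Log\prod_{n\geq 0}(1-q^nZ)^{-1}=Z$ by the telescoping computation above (this is exactly Lemma~\ref{moz} applied with $g=\frac{1}{q-1}$, or a direct M\"obius inversion as in Remark~\ref{LogM}), while the $XY$ factor is in the numerator and so contributes $+XY$. Hence $(q-1)\Log\left(I^*(q,X,Y)\right)=-X-Y+XY$, and specializing $X=Y=T$ recovers Proposition~\ref{I-identity}. The main obstacle is purely bookkeeping: verifying the two successive applications of the $q$-binomial theorem that collapse the double sum with the $q^{rs}$ factor into the product — in particular keeping track of the finite products $\prod_{n=0}^{r-1}(1-q^nY)$ that appear at the intermediate stage and matching them against the Cauchy identity $\sum_r\frac{(a;q)_r}{(q;q)_r}X^r=\frac{(aX;q)_\infty}{(X;q)_\infty}$ with $a=Y$. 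One should also note the convergence is formal in $X,Y$ over $\Q(q)$, so all manipulations of power series are legitimate.
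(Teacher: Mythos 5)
Your proof is correct and uses essentially the same ingredients as the paper's: the $q$-binomial theorem together with Euler's formula for $\sum_{n\ge 0}X^n/(q)_n$, just run in the opposite direction (you collapse the double sum into the infinite product, whereas the paper expands the $q$-binomial identity into the double sum). Your explicit check that $(q-1)\,\Log\prod_{n\geq 0}(1-q^nZ)^{-1}=Z$ via the M\"obius sum, which the paper leaves implicit in the phrase ``or equivalently'', is also correct.
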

\begin{proof}
Following Fadeev-Kasahev~\cite{FK}, start with the $q$-binomial theorem
$$
\sum_{n\geq 0} \frac{(X)_n}{(q)_n}\,Y^n=\frac{(XY)_\infty}{(Y)_\infty}
$$
and replace $(X)_n$ by $(X)_\infty/(Xq^n)_\infty$. Now use Euler's
formula
$$
(u)_\infty^{-1}=\sum_{n\geq 0}\frac{u^n}{(q)_n}
$$
with $u=Xq^n$ to finish the proof.
\end{proof}

\begin{remark}
As the characteristic is different from $2$, 
involutions are in bijection with projections
\begin{eqnarray*}
\{x^2=1\} \qquad &\longleftrightarrow & \qquad
\{e^2=e\} \\
x \qquad & \longleftrightarrow & \qquad \tfrac12(1-x).
\end{eqnarray*}
Hence $I_n(q)$ equals the $q$-Stirling number
$S_{n,2}$ that counts the number of
non-trivial splittings of a vector space of dimension $n$ over $\F_q$
into two direct summands~\cite[Example 5.5.2(b),
pp. 45--6]{Stanley},~\cite{Ellerman}.  
\end{remark}

\begin{proposition}
\label{special-cases}
 The following identity holds
\begin{equation}
\label{Z-examples}
\Log\left(Z_{-1}(q,T)\right)=\frac T{(q-1)}+\frac{T^2}{(q^2-1)(q-1)},
\end{equation}
\end{proposition}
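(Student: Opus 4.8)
The plan is to recognize $Z_{-1}(q,T)$ as the sum of all Schur functions evaluated at a geometric progression of variables, convert this into an explicit infinite product via Littlewood's identity, and then read off $\Log$ term by term using its additivity.

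First I would rewrite the summand of \eqref{Z-defn} using the principal specialization of Schur functions: for every $\lambda\in\calP$,
$$s_\lambda(1,q,q^2,\dots)=q^{n(\lambda)}\prod_{x\in\lambda}\frac1{1-q^{h(x)}}.$$
Since $\prod_{x\in\lambda}(1-q^{h(x)})=(-1)^{|\lambda|}H_\lambda(q)$ by \eqref{hook-polyn}, this says that $q^{-n(\lambda)}H_\lambda(q)=(-1)^{|\lambda|}s_\lambda(1,q,q^2,\dots)^{-1}$, and hence, by homogeneity of $s_\lambda$ and writing $\mathbf{x}:=(-T,-Tq,-Tq^2,\dots)$,
$$Z_{-1}(q,T)=\sum_{\lambda\in\calP}(-1)^{|\lambda|}s_\lambda(1,q,q^2,\dots)\,T^{|\lambda|}=\sum_{\lambda\in\calP}s_\lambda(\mathbf{x}).$$
(Here and below one works in $\Q[[q]][[T]]$, expanding each rational function of $q$ as a power series in $q$; since both sides of \eqref{Z-examples} lie in $\Q(q)[[T]]$, it suffices to prove the identity in this larger ring.) Then I would apply Littlewood's classical identity $\sum_{\lambda}s_\lambda(\mathbf{x})=\prod_i(1-x_i)^{-1}\prod_{i<j}(1-x_ix_j)^{-1}$ (see e.g.~\cite{Stanley}) with $x_i=-Tq^{i-1}$, obtaining the product formula
$$Z_{-1}(q,T)=\prod_{i\ge1}\frac1{1+Tq^{i-1}}\ \prod_{1\le i<j}\frac1{1-T^2q^{i+j-2}}.$$

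Next I would compute $\Log$ of the right-hand side. Using additivity of $\Log$ together with the elementary evaluations $\Log\big((1-q^aT^b)^{-1}\big)=q^aT^b$ and $\Log\big((1+q^aT^b)^{-1}\big)=q^{2a}T^{2b}-q^aT^b$ — the latter because $(1+u)^{-1}=(1-u^2)^{-1}(1-u)$ — one obtains
$$\Log\big(Z_{-1}(q,T)\big)=\sum_{i\ge1}\big(q^{2(i-1)}T^{2}-q^{i-1}T\big)+\sum_{1\le i<j}q^{i+j-2}T^{2}.$$
It then remains to sum geometric series: $\sum_{i\ge1}q^{i-1}=(1-q)^{-1}$, $\sum_{i\ge1}q^{2(i-1)}=(1-q^2)^{-1}$, and (setting $a=i-1$, $b=j-1$) $\sum_{1\le i<j}q^{i+j-2}=\sum_{0\le a<b}q^{a+b}=(1-q)^{-1}\sum_{b\ge1}(q^b-q^{2b})=q\,(1-q)^{-1}(1-q^2)^{-1}$. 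Substituting back, the coefficient of $T$ equals $-(1-q)^{-1}=\tfrac1{q-1}$, and that of $T^2$ equals $\tfrac1{1-q^2}+\tfrac{q}{(1-q)(1-q^2)}=\tfrac1{(1-q^2)(1-q)}=\tfrac1{(q^2-1)(q-1)}$, which is precisely \eqref{Z-examples}.

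The only non-routine input is the first step: noticing that, up to a sign, the $\lambda$-summand of $Z_{-1}$ is a principal specialization of a Schur function, and then invoking Littlewood's identity; everything after that is mechanical, the one thing to watch being the passage to $\Q[[q]][[T]]$ so that the sums over $i$ are legitimate. This also explains why $\varrho=-1$ is the favourable case: for general $\varrho$ one is led instead to $\sum_{\lambda}s_\lambda(1,q,q^2,\dots)^{-\varrho}T^{|\lambda|}$, which has no comparably simple product expansion.
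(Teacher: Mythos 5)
Your argument is correct and follows essentially the same route as the paper's proof: recognize the summand of $Z_{-1}$ as (up to sign) the principal specialization $s_\lambda(1,q,q^2,\dots)$, apply Littlewood's identity $\sum_\lambda s_\lambda(\x)=\prod_i(1-x_i)^{-1}\prod_{i<j}(1-x_ix_j)^{-1}$ with $x_i=-Tq^{i-1}$, convert the factor $(1+q^iT)^{-1}$ into $(1-q^iT)(1-q^{2i}T^2)^{-1}$, and read off $\Log$ of each factor before summing the geometric series. The computations check out (and your signs are in fact cleaner than the paper's displayed intermediate formula), so there is nothing to add.
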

\begin{proof} The identity is a specialization of a corresponding
  identity for the   Schur symmetric functions. Indeed from
 ~\cite[p. 45]{Macdonald} we know that 
$$
s_\lambda(1,q,q^2,\ldots)=(-1)^{|\lambda|}\calH_{\lambda'}(q)^{-1}.
$$
On the other hand~\cite[p.76]{Macdonald}
$$
\sum_\lambda
s_\lambda(x_1,x_2,\ldots)=\prod_i(1-x_i)^{-1}\prod_{i<j}(1-x_ix_j)^{-1}
$$
and (Cauchy's formula)
$$
\sum_\lambda s_\lambda(x_1,x_2,\ldots)s_\lambda(y_1,y_2,\ldots)
=\prod_{i,j}(1-x_iy_j)^{-1}.
$$
It follows that
\begin{eqnarray*}
Z_{-1}(q,T)&=&\sum_\lambda\calH_\lambda(q)\,(-T)^{|\lambda|}\\
&=& \prod_{i\geq  0}(1+q^iT)^{-1} \prod_{0\leq i <j}(1-q^{i+j}T^2)^{-1} \\
&=&\prod_{i\geq  0}(1-q^iT)\prod_{i\geq  0}(1-q^{2i}T^2)^{-1}
\prod_{0\leq i <j}(1-q^{i+j}T^2)^{-1} 
\end{eqnarray*}
and hence
$$
\Log\left(Z_{-1}(q,T)\right)=T\sum_{i\geq 0}q^i+T^2\sum_{0\leq i \leq j}
q^{i+j}.
$$
Summing the series finishes the proof.
\end{proof}
We leave to the reader to deduce the following corollary from the
identity~\eqref{Z-examples}.
\begin{corollary}
\label{rho=-1}
We have 
\begin{equation}
\label{Log-M_-1-fmla}
\Log\left(M_{-1}(q,T)\right)=\frac 2{(q - 1)}T + \frac 1{(q + 1)}T^2.
\end{equation}
\end{corollary}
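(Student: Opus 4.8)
The plan is \emph{not} to substitute $T\mapsto -T$ directly into~\eqref{Log-I-fmla}: that is illegitimate, since $\Log$ does not commute with this substitution (Remark~\ref{Log-minus}). Instead I would work with the \emph{product} form of Proposition~\ref{I-identity}. Comparing the definition~\eqref{I-defn} of $I(q,T)$ with formula~\eqref{r=1} for $M_{-1}(q,T)$ shows at once that $M_{-1}(q,T)=I(q,-T)$, so the product expression of Proposition~\ref{I-identity} gives
\[
M_{-1}(q,T)=\prod_{n\geq 0}\frac{1-q^nT^2}{(1+q^nT)^2}.
\]

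The single input I would isolate is the elementary identity $\Log\bigl(\prod_{n\geq 0}(1-q^nT)^{-1}\bigr)=T/(1-q)$: applying $\Psi$ to $T/(1-q)$ and summing the resulting geometric series produces $-\sum_{n\geq 0}\log(1-q^nT)$, which is $\log$ of the product. Since $\Log$ commutes with the Adams operation $\psi_2$ and with the substitution $T\mapsto T^2$, this also yields $\Log\bigl(\prod_{n\geq 0}(1-q^nT^2)\bigr)=T^2/(q-1)$ and $\Log\bigl(\prod_{n\geq 0}(1-q^{2n}T^2)\bigr)=T^2/(q^2-1)$. Rewriting $1+q^nT=(1-q^{2n}T^2)/(1-q^nT)$ as in Remark~\ref{Log-minus} and using additivity of $\Log$ over products, I would then compute
\[
\Log\bigl(M_{-1}(q,T)\bigr)=\frac{T^2}{q-1}-2\Bigl(\frac{T^2}{q^2-1}-\frac{T}{q-1}\Bigr)=\frac{2}{q-1}\,T+\Bigl(\tfrac{1}{q-1}-\tfrac{2}{q^2-1}\Bigr)T^2,
\]
and the $T^2$-coefficient collapses to $1/(q+1)$ because $\tfrac{1}{q-1}-\tfrac{2}{q^2-1}=\tfrac{q-1}{q^2-1}$. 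That is exactly~\eqref{Log-M_-1-fmla}. There is no genuine obstacle here; the only trap is precisely the $\Log$/$(T\mapsto -T)$ incompatibility flagged in Remark~\ref{Log-minus}, which is why one must pass through the product form rather than~\eqref{Log-I-fmla} itself.

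As an alternative matching the pointer to~\eqref{Z-examples}, one may instead invoke Theorem~\ref{non-orient-thm}(ii) with $\varrho=-1$. From~\eqref{Z-examples} one reads off $V_{-1,1}(q)=\tfrac{1}{q-1}$, $V_{-1,2}(q)=\tfrac{1}{(q^2-1)(q-1)}$ and $V_{-1,n}(q)=0$ for $n\geq 3$; the same Schur-function computation (Cauchy identity plus the principal specialization $s_\lambda(1,q,q^2,\dots)$) applied to $Z_{-2}$ gives $Z_{-2}(q,T)=\prod_{i,j\geq 0}(1-q^{i+j}T)^{-1}=\Exp\bigl(T/(1-q)^2\bigr)$, hence $V_{-2,1}(q)=\tfrac{1}{(q-1)^2}$ and $V_{-2,n}(q)=0$ for $n\geq 2$. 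Feeding these into the formula for $W_{-1,n}(q)$: the terms with $n$ odd $\geq 3$ vanish outright (only $2V_{-1,n}$ survives); the terms with $n=2m$, $m\geq 2$, vanish because $V_{-1,m,2}(q^2)=V_{-2,m,2}(q)$ (both equal $\tfrac{1}{m(q^m-1)^2}$ when $m$ is a power of $2$, and $0$ otherwise); while $W_{-1,1}(q)=\tfrac{2}{q-1}$ and $W_{-1,2}(q)=\tfrac{1}{q+1}$ after a short partial-fractions simplification. Then $\Log(M_{-1})=\sum_n W_{-1,n}(q)T^n$ again gives~\eqref{Log-M_-1-fmla}. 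On this route the only mildly delicate point is the bookkeeping with the Adams-twisted quantities $V_{\varrho,n,k}$, specifically the identity $V_{-1,m,2}(q^2)=V_{-2,m,2}(q)$ for $m\geq 2$.
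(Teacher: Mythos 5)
Your proposal is correct and follows exactly the two routes the paper itself indicates: the text leaves the corollary to the reader as a deduction from~\eqref{Z-examples} via Theorem~\ref{non-orient-thm}(ii), and the sentence after the corollary points to the alternative $M_{-1}(q,T)=I(q,-T)$ combined with~\eqref{Log-I-fmla}, keeping Remark~\ref{Log-minus} in mind. Both of your computations check out, including the two details the paper does not spell out, namely the evaluation $\Log\left(Z_{-2}(q,T)\right)=T/(1-q)^2$ and the cancellation $V_{-1,m,2}(q^2)=V_{-2,m,2}(q)$ for $m\geq 2$.
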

Comparing~\eqref{I-defn} with~\eqref{M-defn} we see that
$M_{-1}(q,T)=I(q,-T)$ hence Corollary~\ref{rho=-1} also follows
from~\eqref{Log-I-fmla} (keeping in mind Remark~\ref{Log-minus}).


\begin{remark}
Proposition~\ref{I*-identity} is essentially the quantum version of
the 5-term relation of the dilogarithm of Fadeev-Kasahev~\cite{FK}. Indeed, if
we let $u,v$ satisfy the relation $vu=quv$ then
$$
I^*(q,u,v)=E(q,v)E(q,u),
$$
where 
$$
E(q,T):=\sum_{n\geq 0} \frac {T^n}{(q)_n}=\prod_{n\geq
  0}(1-q^nT)^{-1}. 
$$
We also have
$$
E(q,T)^{-1}=\sum_{n\geq 0} \frac{(-1)^nq^{n(n-1)/2}}{(q)_n}\,T^n.
$$
It is easily checked by induction that 
$$
v^ru^s=q^{rs}u^sv^r, \qquad \qquad (vu)^n=q^{n(n-1)/2}u^nv^n.
$$
A calculation now shows that if
$$
E(q,X)E(q,Y)E(q,XY)^{-1}= \sum_{m,n\geq 0} c_{m,n}(q) \,X^nY^m
$$
then
$$
E(q,u)E(q,-vu)E(q,v) =\sum_{m,n\geq 0} c_{m,n}(q) \,u^nv^m
$$
and hence Proposition~\ref{I*-identity} is equivalent to
$$
E(q,v)E(q,u)=E(q,u)E(q,-vu)E(q,v).
$$
\end{remark}
\subsection{$\Z\times\Z/2\Z$-orbits on a set}\label{tN}

Fix an infinite  set $X$. Let $F:X\rightarrow X$ be an automorphism of
infinite order such that for all $x\in X$, the set
$\{F^i(x)\,|\,i\in\Z\}$ is finite, and let $\sigma\in{\rm Aut}(X)$ be
an involution that commutes with $F$. Consider the action of
$\Gamma:=\Z\times \Z/2\Z$ on $X$ where the first factor acts via $F$
and the second factor via $\sigma$. 

For $x\in X$, let $r=r(x)$ be the smallest non-negative integer such that 

$$
F^r(x)=\sigma(x),
$$
if one exists, otherwise set $r=\infty$. Let also $d=d(x)$ be the \emph{degree} of $x$, i.e. the size of its $F$-orbit. We will call $\nu=(r,d)$ the $\Gamma$-degree of the $\Gamma$-orbit of $x$; these are of the following three kinds.

\medskip
\begin{center}
\begin{tabular}{c|c|c}
&$\nu$ & $|\nu|$\\
\hline
(i) & $(0,d)$ & $d$ \\
(ii) & $(r,2r)$ & $2r$ \\
(iii)& $(\infty,d)$ & $2d$
\end{tabular}
\end{center}
\medskip
where $|\nu|$ denotes the size of the corresponding $\Gamma$-orbit and $r>0$ in case ii). Orbits of the first kind are of the form $\{x,F(x),\dots,F^{d-1}(x)\}$ with $x\in X^\sigma$. Orbits of the second kind are of the form $\{x,F(x),\dots,F^{2r-1}(x)\}$ with $x$ of degree $2r$ satisfying $F^r(x)=\sigma(x)$. Finally, orbits of the third kind are of the form 

$$
\{x,F(x),\dots,F^{d-1}(x),\sigma(x),\dots,F^{d-1}\sigma(x)\},
$$
where $x$ has degree $d$ and does not satisfy any equation of the form $F^r(x)=\sigma(x)$.

For $\nu$ a given $\Gamma$-degree let $\tilde{N}_\nu(q)$ be the number
of $\Gamma$-orbits of $\Gamma$-degree $\nu$. For integers $r,d> 0$, define
$$
N_d:=|\{x\in X^\sigma\,|\,F^d(x)=x\}|,\hspace{.5cm}N'_r:=|\{x\in X-X^\sigma\,|\,F^r(x)=\sigma(x)\}|,\hspace{.5cm} N_d^\#:=|\{x\in X-X^\sigma\,|\,F^d(x)=x\}|,
$$
We denote by $\mu$ the ordinary M\"obius function.

\begin{proposition} We have 

(i) 

$$
\tilde{N}_{(0,d)}=\frac{1}{d}\sum_{r\mid d}\mu\left(\frac{d}{r}\right) N_d.
$$
Let $\tilde{N}_d^\#$ be the number of $F$-orbit of $X-X^\sigma$ of size $d$. Then

$$
\tilde{N}_d^\#=\frac{1}{d}\sum_{e\mid d}\mu\left(\frac{d}{e}\right)N^\#_e.
$$

(ii) 

$$
\tilde{N}_{(r,2r)}=\frac{1}{2r}\sum_{s\mid r,\; r/s \text{ \rm odd}}\mu\left(\frac r
  s \right) N'_s.
$$

(iii) 

$$
\tilde{N}_{(\infty,d)}=\frac{1}{2}\tilde{N}_d^\#-\begin{cases}
\frac 1 2\tilde N_{(d/2,d)} &\quad \text{ \rm if $d$ is even}\\
0 & \quad \text{ \rm otherwise.}
\end{cases}
$$

\label{orbits}\end{proposition}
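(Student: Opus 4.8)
The three assertions are all instances of counting $\Gamma$-orbits of a prescribed $\Gamma$-degree by first counting fixed points of appropriate powers and then applying Möbius inversion. The plan is to set up, for each of the three types, the right "fixed-point count," identify over-countings, and invert.

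First I would treat part (i). An element $x\in X^\sigma$ of $F$-degree exactly $d$ has $\Gamma$-orbit $\{x,F(x),\dots,F^{d-1}(x)\}$ of $\Gamma$-degree $(0,d)$, and each such orbit contains exactly $d$ such elements. On the other hand $N_d$ counts \emph{all} $x\in X^\sigma$ with $F^d(x)=x$, which decomposes according to the (necessarily divisor of $d$) actual $F$-degree $e\mid d$: thus $N_d=\sum_{e\mid d}e\,\tilde N_{(0,e)}$. Standard Möbius inversion gives $\tilde N_{(0,d)}=\frac1d\sum_{e\mid d}\mu(d/e)N_e$. The same argument with $X-X^\sigma$ in place of $X^\sigma$ (noting $X-X^\sigma$ is $F$-stable since $\sigma$ commutes with $F$) and $N_e^\#$ in place of $N_e$ gives the formula for $\tilde N_d^\#$. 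I would write the displayed formula as stated, remarking that the right-hand side should read $\sum_{e\mid d}\mu(d/e)N_e$ rather than $N_d$ inside the sum.

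Next, part (ii). Here orbits of $\Gamma$-degree $(r,2r)$ consist of an $x\notin X^\sigma$ with $F$-degree exactly $2r$ and with $r$ the \emph{minimal} positive integer satisfying $F^r(x)=\sigma(x)$; such an orbit has $2r$ elements, and all $2r$ of its elements satisfy $F^r(\cdot)=\sigma(\cdot)$ with the same minimal $r$. The quantity $N'_r=\#\{x\in X-X^\sigma\mid F^r(x)=\sigma(x)\}$ over-counts: if $F^r(x)=\sigma(x)$ then applying $F^r\sigma=\sigma F^r$ twice gives $F^{2r}(x)=x$, so the minimal period $s$ of "$F^s=\sigma$ on $x$" divides $r$; moreover $r/s$ must be odd, because $F^{2s}(x)=x$ forces $F^{ms}(x)=\sigma(x)$ iff $m$ is odd. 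Hence $N'_r=\sum_{s\mid r,\ r/s\ \mathrm{odd}} s\cdot(2\tilde N_{(s,2s)})$? — I must be careful about the factor: $N'_r$ counts elements, each contributing, and an orbit of $\Gamma$-degree $(s,2s)$ contributes all $2s$ of its elements whenever $r/s$ is odd. So $N'_r=\sum_{s\mid r,\ r/s\ \mathrm{odd}}2s\,\tilde N_{(s,2s)}$, and Möbius inversion restricted to the sub-poset of divisors with odd quotient yields $\tilde N_{(r,2r)}=\frac1{2r}\sum_{s\mid r,\ r/s\ \mathrm{odd}}\mu(r/s)N'_s$. The arithmetic fact I need is that the incidence algebra of the divisor lattice restricted to "odd quotient" has inverse given by the usual $\mu$ on that same set — this follows by writing $r=2^a r_{\mathrm{odd}}$ and observing the condition only constrains the odd part.

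Finally, part (iii). An orbit of $\Gamma$-degree $(\infty,d)$ is the union of two distinct $F$-orbits of size $d$ inside $X-X^\sigma$, swapped by $\sigma$; conversely, an $F$-orbit of size $d$ in $X-X^\sigma$ is either swapped with a different $F$-orbit by $\sigma$ (contributing to a type (iii) orbit, two $F$-orbits per $\Gamma$-orbit) or is $\sigma$-stable, in which case $\sigma$ acts on it as $F^j$ for some $j$; $\sigma^2=1$ forces $F^{2j}$ to fix the orbit, and chasing this shows the orbit has $\Gamma$-degree $(d/2,d)$ (in particular $d$ even) — this is exactly the type (ii) orbit with $r=d/2$, and each such contains one $F$-orbit of size $d$ wait — it contains a single $F$-orbit of size $2r=d$. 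Therefore $\tilde N_d^\# = 2\tilde N_{(\infty,d)} + [\,d\text{ even}\,]\,\tilde N_{(d/2,d)}$, which rearranges to the stated formula. The main obstacle is keeping the bookkeeping of multiplicities straight — in particular verifying in case (ii) that "$F^r(x)=\sigma(x)$, $r$ minimal" together with $\sigma^2=1$ really forces the $F$-degree to be $2r$ (not $r$: if it were $r$ then $\sigma(x)=F^r(x)=x$, contradicting $x\notin X^\sigma$) and that the admissible multiples are precisely the odd ones, and in case (iii) correctly identifying a $\sigma$-stable $F$-orbit in $X\setminus X^\sigma$ with a type (ii) orbit. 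Once these combinatorial identifications are pinned down, each formula is a one-line Möbius inversion.
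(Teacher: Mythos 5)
Your argument is correct and follows essentially the same route as the paper: for (ii) you decompose $N'_r$ as a sum of $|X_{(s,2s)}|=2s\,\tilde N_{(s,2s)}$ over divisors $s\mid r$ with $r/s$ odd and then apply M\"obius inversion on the odd part of $r$, which is exactly the paper's proof (the paper proves only (ii) and leaves (i) and (iii), which you handle by the standard inversion and the pairing of $\sigma$-swapped versus $\sigma$-stable $F$-orbits, respectively). Your observation that the displayed formula in (i) should read $N_r$ (depending on the summation variable) rather than $N_d$ is a correct reading of an evident typo in the statement.
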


\begin{proof} We only prove (ii). Put 
$$
X'_r:=\{x\in X-X^\sigma\,|\, F^r(x)=\sigma(x)\},
$$
and let $X_{(s,2s)}$ be the subset of elements $x$ of $X'_s$ such that $r(x)=s$. Since $\sigma$ is an involution we have:
$$
X'_r=\bigcup_{s\mid r,\,r/s\,{\rm odd}}X_{(s,2s)}.
$$

Hence 
\begin{align*}
N'_r&=\sum_{s\mid r,\,r/s\,{\rm odd}}|X_{(s,2s)}|
\end{align*}
From the M\"obius inversion formula we find that 
$$
|X_{(r,2r)}|=\sum_{s\mid r,\,r/s\,{\rm odd}}\mu\left(\frac{r}{s}\right)N'_s.
$$
We thus deduce ii) by noticing that $\tilde{N}_{(r,2r)}=\frac{1}{2r}\,|X_{(r,2r)}|$.
\end{proof}

\subsection{Colorings on varieties, infinite products formulas}

The first part of this section is a minor extension of~\cite{RV} which
we recall for the convenience of the reader. 

We keep the notation of \S \ref{tN} but here we assume that $X$ is an
algebraic variety over $\overline{\F}_q$ which is defined over $\F_q$
and that $F:X(\oF_q)\rightarrow X(\oF_q)$ is the corresponding
Frobenius endomorphism (for any integer $r\geq 1$, we have $ 
X^{F^r}=X(\F_{q^r})$). Here we use the notation
$\tilde{N}_{(0,d)}(q)$, $\tilde{N}_{(d,2d)}(q)$ and
$\tilde{N}_{(\infty,d)}(q)$  instead of $\tilde{N}_{(0,d)}$,
$\tilde{N}_{(d,2d)}$ and $\tilde{N}_{(\infty,d)}$.  

We also make the assumption that there exists polynomials
$\tilde{N}_{(0,1)}(T), \tilde{N}_{(1,2)}(T),
\tilde{N}_{(\infty,1)}(T)\in \Q[T]$ such that for any finite field
extension $\F_{q^d}$ of $\F_q$, we have  
\begin{align*}
&\tilde{N}_{(0,1)}(q^d)=\tilde{N}_{(0,d)}(q)\\
&\tilde{N}_{(1,2)}(q^d)=\tilde{N}_{(d,2d)}(q)\\
&\tilde{N}_{(\infty,1)}(q^d)=\tilde{N}_{(\infty,d)}(q)\\
\end{align*}
Then there exist also polynomials $N_1(T), N'_1(T), N_1^\#(T)\in
\Q[T]$, such that for any finite field extension $\F_{q^d}$ we have  
$$
N_1(q^d)=N_d,\hspace{.5cm} N'_1(q^d)=N'_d, \hspace{.5cm}N_1^\#(q^d)=N_d^\#.
$$

Denote by $\calP$ the set of all partitions and denote by $0$ the
unique partition of $0$. For $\lambda\in\calP$, we denote by
$|\lambda|$ the size of $\lambda$. Assume given a \emph{weight
  function} $W_\lambda:\calP\rightarrow\Q(T)$, $\lambda\mapsto
W_\lambda(T)$ with $W_0(T)=1$. Let $X/\Gamma$ denotes the set of
$\Gamma$-orbits of $X(\oF_q)$. For a map $f:X/\Gamma\rightarrow \calP$
we let $|f|:=\sum_{\gamma\in
  X/\Gamma}|\gamma|\cdot|f(\gamma)|\in\N\cup\{\infty\}$ be the size of
the support of $f$. We also denote by $O_0,O_1,O_\infty$ the union of
the orbits of the first kind, the second kind and the third kind
respectively (so that $O_0\cup O_1\cup O_\infty=X/\Gamma$). 

Then for $f:X/\Gamma\rightarrow \calP$ with finite support, we put 

$$
W_f(q):=\prod_{\gamma\in
  O_0}W_{f(\gamma)}\left(q^{|\gamma|}\right)\prod_{\gamma\in
  O_1}W_{f(\gamma)}\left(q^{|\gamma|}\right)\prod_{\gamma\in
  0_\infty}W_{f(\gamma)}\left(q^{|\gamma|/2}\right)^2. 
$$
Consider
 $$
 Z(q,T):=\sum_\lambda W_\lambda(q)T^{|\lambda|},\hspace{.5cm} Z_2(q,T):=\sum_\lambda W_\lambda(q)^2T^{|\lambda|}.
 $$

\begin{proposition} We have 
\begin{equation}
\sum_{\{f:X/\Gamma\rightarrow\calP,\,|f|<\infty\}}W_f(q)T^{|f|}=\prod_{d\geq 1}Z(q^d,T^d)^{\tilde{N}_{(0,d)}(q)}\prod_{r\geq 1}Z(q^{2r},T^{2r})^{\tilde{N}_{(r,2r)}(q)}\prod_{d\geq 1}Z_2(q^d,T^{2d})^{\tilde{N}_{(\infty,d)}(q)}.
\label{W}\end{equation}
\end{proposition}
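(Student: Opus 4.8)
The plan is to exploit the fact that a finitely supported coloring $f\colon X/\Gamma\to\calP$ is nothing but an independent choice of a partition $f(\gamma)$ for each $\Gamma$-orbit $\gamma$, almost all of them empty, so that the left-hand series factors as an infinite product indexed by $\Gamma$-orbits. First I would record the two elementary "additivity" identities $|f|=\sum_{\gamma\in X/\Gamma}|\gamma|\cdot|f(\gamma)|$ and $W_f(q)=\prod_{\gamma\in X/\Gamma}w_\gamma(f(\gamma))$, where $w_\gamma(\lambda):=W_\lambda(q^{|\gamma|})$ for $\gamma\in O_0\cup O_1$ and $w_\gamma(\lambda):=W_\lambda(q^{|\gamma|/2})^2$ for $\gamma\in O_\infty$; both are immediate from the definitions of $|f|$ and of $W_f$, together with the normalization $W_0(T)=1$, which guarantees $w_\gamma(0)=1$ for every $\gamma$.

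Next I would justify the formal factorization
$$\sum_{\substack{f\colon X/\Gamma\to\calP\\ |f|<\infty}} W_f(q)\,T^{|f|}\;=\;\prod_{\gamma\in X/\Gamma}\Bigl(\sum_{\lambda\in\calP}w_\gamma(\lambda)\,T^{|\gamma|\,|\lambda|}\Bigr).$$
This is the standard Euler-product manipulation: since $X$ is of finite type over $\F_q$ the numbers $N_d,N'_d,N_d^\#$, and hence all the $\tilde N_\nu(q)$, are finite, so there are only finitely many $\Gamma$-orbits of any bounded size. Consequently, for each $n$ only finitely many orbits $\gamma$ satisfy $|\gamma|\le n$, every factor on the right is of the form $1+O(T^{|\gamma|})$ because $w_\gamma(0)=1$, and the coefficient of $T^n$ on either side is the same finite sum $\sum\prod_\gamma w_\gamma(f(\gamma))$ taken over the tuples $(f(\gamma))_\gamma$ with $\sum_\gamma|\gamma|\,|f(\gamma)|=n$. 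Thus both sides are well-defined elements of the formal power-series ring in $T$ (over $\Q(q)$, or over $\Q$ if $q$ is a fixed prime power) and they coincide.

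It then remains to evaluate the factor attached to a single orbit according to its kind, using the classification of $\Gamma$-orbits recalled in \S\ref{tN} and Proposition~\ref{orbits}. If $\gamma\in O_0$ has $|\gamma|=d$, the factor is $\sum_\lambda W_\lambda(q^d)T^{d|\lambda|}=Z(q^d,T^d)$; if $\gamma\in O_1$ has $|\gamma|=2r$, it is $\sum_\lambda W_\lambda(q^{2r})T^{2r|\lambda|}=Z(q^{2r},T^{2r})$; and if $\gamma\in O_\infty$ has $|\gamma|=2d$ (degree $d$), it is $\sum_\lambda W_\lambda(q^d)^2T^{2d|\lambda|}=Z_2(q^d,T^{2d})$. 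Grouping the factors by kind and by size, and using that there are exactly $\tilde N_{(0,d)}(q)$ orbits of the first kind of size $d$, exactly $\tilde N_{(r,2r)}(q)$ of the second kind of size $2r$, and exactly $\tilde N_{(\infty,d)}(q)$ of the third kind of size $2d$, turns the product over $X/\Gamma$ into precisely the right-hand side of \eqref{W}.

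The argument is essentially bookkeeping and closely follows the corresponding statement in~\cite{RV}; the only step I expect to be error-prone rather than genuinely hard is keeping straight the factor-of-$2$ discrepancies between the degree of an orbit and its size in cases (ii) and (iii), and correspondingly matching the substitutions $q\mapsto q^{|\gamma|}$ versus $q\mapsto q^{|\gamma|/2}$ occurring in $W_f$ with the exponents of $T$ produced by $|\gamma|\,|\lambda|$. I would also emphasize that no analytic input is needed: all manipulations take place $T$-adically in a formal power-series ring, so every infinite product above converges without further comment.
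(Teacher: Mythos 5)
Your proposal is correct and follows essentially the same route as the paper: both factor the sum over finitely supported colorings as a product over $\Gamma$-orbits (the paper first splits by the three kinds of orbits, you factor over all orbits at once), evaluate the local factor according to the kind and size of the orbit, and then group equal factors using the counts $\tilde N_\nu(q)$. Your added remarks on formal $T$-adic convergence and on the $|\gamma|$ versus $|\gamma|/2$ bookkeeping are correct but not substantively different from the paper's argument.
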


\begin{proof}We have 
$\sum_{\{f:X/\Gamma\rightarrow\calP,\,|f|<\infty\}}W_f(q)T^{|f|}$
\begin{align*}
&=\left(\sum_{f:O_0\rightarrow\calP}\left(\prod_{\gamma\in O_0}W_{f(\gamma)}\left(q^{|\gamma|}\right)\right)T^{|f|}\right)\left(\sum_{f:O_1\rightarrow\calP}\left(\prod_{\gamma\in O_1}W_{f(\gamma)}\left(q^{|\gamma|}\right)\right)T^{|f|}\right)\left(\sum_{f:O_\infty\rightarrow\calP}\left(\prod_{\gamma\in O_\infty}W_{f(\gamma)}\left(q^{|\gamma|/2}\right)^2\right)T^{|f|}\right)\\
&=\prod_{\gamma\in O_0}\left(\sum_\lambda W_\lambda\left(q^{|\gamma|}\right)T^{|\gamma|\cdot|\lambda|}\right)\cdot\prod_{\gamma\in O_1}\left(\sum_\lambda W_\lambda\left(q^{|\gamma|}\right)T^{|\gamma|\cdot|\lambda|}\right)\cdot\prod_{\gamma\in O_\infty}\left(\sum_\lambda W_\lambda\left(q^{|\gamma|/2}\right)^2T^{|\gamma|\cdot|\lambda|}\right)\\
\end{align*}
 \end{proof}

In order to express the left hand side of~\ref{W} as an infinite
product in the variables $q$ and $T$, we need to compute the Log of
the right hand side of~\eqref{W}  (see~\cite{RV} for more details). It
is convenient to work formally and consider the following general case. 

Put
$$
F_0:=\prod_{d\geq 1}\left(\Omega_0(q^d,T^d)\right)^{\tilde{N}_{(0,d)}(q)}, \hspace{.5cm} F_1:=\prod_{r\geq 1}\left(\Omega_1(q^{2r},T^{2r})\right)^{\tilde{N}_{(r,2r)}(q)}, \hspace{.5cm} F_\infty:=\prod_{d\geq 1}\left(\Omega_\infty(q^d,T^{2d})\right)^{\tilde{N}_{(\infty,d)}(q)}
$$
for some $\Omega_i(q,T)\in1+T\Lambda[[T]]$ with $i\in\{0,1,\infty\}$.

For $i=1,\infty$, define $\{H_{i,n}(q)\}_n$ by 
$$
\sum_{n\geq 1}H_{i,n}(q)T^n=\Log\,\left(\Omega_i(q,T)\right).
$$

\begin{theorem}We have
(i)
$$
\Log\,(F_0)=N_1(q)\,\Log\,\left(\Omega_0(q,T)\right).
$$
(ii)
$$\Log\,(F_1)=\frac 1 2N'_1(q)\sum_{m\geq 1}\left(\sum_{j=0}^{v_2(m)}\frac{1}{2^j}H_{1,m/2^j}(q^{2^{j+1}})\right)T^{2m}.
$$
(iii)
$$\Log\,(F_\infty)=\frac{1}{2}N_1^\#(q)\,\sum_{m\geq 1}H_{\infty,m}(q) T^{2m}-\frac 1 2 N'_1(q)\sum_{m\geq 1}\sum_{j=1}^{v_2(m)}\frac 1 {2^j}H_{\infty,m/2^j}(q^{2^j})T^{2m}.
$$
where $v_2$ denotes the valuation at $2$ and $H_{1,x}=H_{\infty,x}=0$ if $x$ is not an integer.

\label{maintheo}\end{theorem}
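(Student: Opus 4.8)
The plan is to reduce all three parts to Lemma~\ref{moz} together with bookkeeping of the Adams operations $\psi_n$, using that here $\psi_n$ is the substitution $q\mapsto q^n,\ T\mapsto T^n$, so that $\Omega_0(q^d,T^d)=\psi_d(\Omega_0)$, $\Omega_1(q^{2r},T^{2r})=\psi_{2r}(\Omega_1)$, and $\Omega_\infty(q^d,T^{2d})=\psi_d(\tilde\Omega_\infty)$ with $\tilde\Omega_\infty(q,T):=\Omega_\infty(q,T^2)$; note also that $T\mapsto T^2$ commutes with every $\psi_n$ and hence with $\Log$ (unlike $T\mapsto-T$, cf.~Remark~\ref{Log-minus}), so $\Log(\tilde\Omega_\infty)=\sum_{n\geq1}H_{\infty,n}(q)T^{2n}$. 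For (i) I would take the ordinary formal logarithm of the defining product, getting $\log(F_0)=\sum_{d\geq1}\tilde N_{(0,d)}(q)\,\log(\psi_d(\Omega_0))$, and observe that, since $N_d=N_1(q^d)=\psi_d(N_1)$, Proposition~\ref{orbits}(i) says precisely that $\tilde N_{(0,d)}(q)$ is the M\"obius transform $\frac1d\sum_{e\mid d}\mu(e)\psi_{d/e}(N_1)$ of $g:=N_1(q)$; Lemma~\ref{moz} then gives (i) immediately.

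For (ii) I would start from $\log(F_1)=\sum_{r\geq1}\tilde N_{(r,2r)}(q)\,\psi_{2r}(\log\Omega_1)$ (ordinary $\log$ commutes with the ring maps $\psi_{2r}$). Substituting $\tilde N_{(r,2r)}(q)=\frac1{2r}\sum_{s\mid r,\ r/s\ \mathrm{odd}}\mu(r/s)\psi_s(N'_1)$ from Proposition~\ref{orbits}(ii) and writing $r=2^am$ with $m$ odd, the constraint "$r/s$ odd" forces $s=2^at$ with $t\mid m$, and the sum collapses to $\tilde N_{(r,2r)}(q)=\tfrac12\,2^{-a}\psi_{2^a}\bigl((N'_1)_m\bigr)$, where $(N'_1)_m$ is the M\"obius transform of $N'_1(q)$. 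Because the $\psi_n$ are commuting ring endomorphisms this rearranges to
\[
\log(F_1)=\tfrac12\,\Psi_2\Bigl(\sum_{m\ \mathrm{odd}}(N'_1)_m\,\psi_{2m}(\log\Omega_1)\Bigr),\qquad \Psi_2:=\sum_{a\geq0}2^{-a}\psi_{2^a}.
\]
Since every positive integer is uniquely a power of $2$ times an odd number, $\Psi=\Psi_2\circ\Psi_{\mathrm{odd}}$ with $\Psi_{\mathrm{odd}}:=\sum_{j\ \mathrm{odd}}j^{-1}\psi_j$, so $\Log(F_1)=\Psi^{-1}(\log F_1)=\tfrac12\,\Psi_{\mathrm{odd}}^{-1}\bigl(\sum_{m\ \mathrm{odd}}(N'_1)_m\,\psi_{2m}(\log\Omega_1)\bigr)$. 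Expanding $(N'_1)_m$ and $\Psi_{\mathrm{odd}}^{-1}$ and regrouping by the total $\psi$-scaling applied to $\log\Omega_1$, the inner divisor sum telescopes via $\sum_{j\mid f}\mu(j)=\delta_{f,1}$, leaving $\Log(F_1)=\tfrac12 N'_1(q)\,\psi_2\Psi_2(\Log\Omega_1)=\tfrac12 N'_1(q)\sum_{a\geq0}2^{-a}\psi_{2^{a+1}}(\Log\Omega_1)$; extracting the coefficient of $T^{2m}$ (so that $2^{a+1}$ times the index of $H_1$ equals $2m$, forcing $0\leq a\leq v_2(m)$) is exactly (ii).

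For (iii), Proposition~\ref{orbits}(iii) gives $\tilde N_{(\infty,d)}(q)=\tfrac12\tilde N_d^\#(q)-[\,2\mid d\,]\,\tfrac12\tilde N_{(d/2,d)}(q)$, so $F_\infty=H^{1/2}\,G^{-1/2}$ with $H:=\prod_{d\geq1}\psi_d(\tilde\Omega_\infty)^{\tilde N_d^\#(q)}$ and $G:=\prod_{r\geq1}\psi_{2r}(\tilde\Omega_\infty)^{\tilde N_{(r,2r)}(q)}$. Now $H$ has exactly the shape of $F_0$ with $\Omega_0$ replaced by $\tilde\Omega_\infty$ and $N_1$ by $N_1^\#$ (using $\tilde N_d^\#=(N_1^\#)_d$), so the argument of (i) gives $\Log(H)=N_1^\#(q)\Log(\tilde\Omega_\infty)$, and $G$ has exactly the shape of $F_1$ with $\Omega_1$ replaced by $\tilde\Omega_\infty$, so (ii) gives $\Log(G)=\tfrac12 N'_1(q)\sum_{a\geq0}2^{-a}\psi_{2^{a+1}}(\Log\tilde\Omega_\infty)$. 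Since $\Psi^{-1}$ fixes rational scalars we have $\Log(f^c)=c\,\Log(f)$ for $c\in\Q$, hence $\Log(F_\infty)=\tfrac12\Log(H)-\tfrac12\Log(G)$; substituting $\Log(\tilde\Omega_\infty)=\sum_n H_{\infty,n}(q)T^{2n}$ and reading off the coefficient of $T^{2m}$ (for the second summand $2^{a+2}n=2m$, forcing $1\leq a+1\leq v_2(m)$) yields precisely the two terms of (iii).

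The only genuinely delicate point is the combinatorics in (ii): parts (i) and (iii) are, respectively, an immediate application of Lemma~\ref{moz} and a reduction to (i) and (ii), so the hard part will be managing the interplay of the "$r/s$ odd" constraint of Proposition~\ref{orbits}(ii), the $2$-adic stratification $r=2^am$, and the M\"obius inversion built into $\Log=\Psi^{-1}\circ\log$ — i.e.\ checking carefully that all the nested divisor sums telescope to the stated closed forms.
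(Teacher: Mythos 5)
Your proposal is correct and follows essentially the same route as the paper: part (i) is Lemma~\ref{moz} applied via Proposition~\ref{orbits}(i), part (iii) is reduced to (i) and (ii) through the decomposition $F_\infty=H^{1/2}G^{-1/2}$ coming from Proposition~\ref{orbits}(iii), and the delicate point is the same $2$-adic M\"obius telescoping in (ii). Your packaging of that computation as the operator identity $\Log(F_1)=\tfrac12 N_1'(q)\,\psi_2\Psi_2(\Log\Omega_1)$ using the factorization $\Psi=\Psi_2\circ\Psi_{\mathrm{odd}}$ is a tidier presentation of the paper's explicit coefficient-level divisor sums ($R_n$, $C_N$, $V_{2m}$), but it is the same calculation and yields the same closed forms.
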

\begin{proof} The first identity follows from Lemma \ref{moz} and
  Proposition \ref{orbits}(i). For the second identity, we compute
  $\Log(F_1)$ using the two steps procedure mentioned in Remark
  \ref{LogM}. We thus define $\{R_n(q)\}_{n\geq 1}$ by  
$$
\log\,(\Omega_1(q,T))=\sum_{n\geq 1}R_n(q)\frac{T^n}{n}.
$$

We have 
\begin{align*}
\log\,(F_1)&=\sum_{r\geq 1}\tilde{N}_{(r,2r)}(q)\log\,(\Omega_1(q^{2r},T^{2r}))\\
&=\sum_{r\geq 1}\tilde{N}_{(r,2r)}(q)\sum_{n\geq 1}R_n(q^{2r})\frac{T^{2rn}}{n}\\
&=\sum_{m\geq 1}\left(\sum_{r|m}2r\tilde{N}_{(r,2r)}(q)R_{\frac{m}{r}}(q^{2r})\right)\frac{T^{2m}}{2m}\\
&=\sum_{N\geq 1}C_N(q)\frac{T^N}{N}
\end{align*}
where
$$
C_N(q):=\begin{cases}\sum_{r|\frac{N}{2}}2r\tilde{N}_{(r,2r)}(q)R_{\frac{N}{2r}}(q^{2r})&\text{ if }N\text{ is even}\\0& \text{otherwise}\end{cases}.
$$
Then
$$
\Log\, (F_1)=\sum_{n\geq 1}V_{n}(q)T^{n}
$$ 
where $V_n(q):=\frac{1}{n}\sum_{d|n}\mu(d)C_{n/d}(q^d)$.

Hence $V_n(q)=0$ if $n$ is odd and
\begin{align*}
V_{2m}(q)&=\frac{1}{2m}\sum_{\{d|2m\,,\,\frac{2m}{d}\,{\rm even}\}}\mu(d)C_{2m/d}(q^d)\\
&=\frac{1}{2m}\sum_{d|m}\mu(d)C_{2m/d}(q^d)\\
&=\frac{1}{2m}\sum_{d|m}\mu(d)\sum_{r|\frac{m}{d}}2r\tilde{N}_{(r,2r)}(q^d)R_{\frac{m}{rd}}(q^{2rd})\\
&=\frac{1}{m}\sum_{k|m}\sum_{d|k}\mu(d)\frac{k}{d}\tilde{N}_{\left(\frac{k}{d},2\frac{k}{d}\right)}(q^d)R_{\frac{m}{k}}(q^{2k})\\
&=\frac{1}{m}\sum_{k|m}R_{\frac{m}{k}}(q^{2k})\sum_{d|k}\mu(d)\frac{k}{d}\tilde{N}_{\left(\frac{k}{d},2\frac{k}{d}\right)}(q^d)\\
&=\frac{1}{m}\sum_{k|m}R_{\frac{m}{k}}(q^{2k})\sum_{d|k}\mu\left(\frac k d\right)d\tilde{N}_{\left(d,2d\right)}(q^{k/d})
\end{align*}

By Proposition \ref{orbits} we have 
\begin{align*}
r\tilde{N}_{(r,2r)}(q)&=\frac{1}{2}\sum_{s|r,\,r/s\,{\rm odd}}\mu\left(\frac{r}{s}\right)N'_1(q^s)\\
&=\frac{1}{2}\sum_{s|r'}\mu(s)N'_1(q^{r/s}),
\end{align*}
where $r':=r/2^{v_2(r)}$. 

Put $g_s(q):=\mu(s)N'_1(q^{1/s})$ and $f_r(q):=\sum_{s|r}g_s(q)$. Note
that $r\tilde{N}_{(r,2r)}(q)=\frac{1}{2}f_{r'}(q^r)$. We thus have
\begin{align*}
V_{2m}(q)&=\frac{1}{2m}\sum_{k|m}R_{\frac{m}{k}}(q^{2k})\sum_{d|k}\mu\left(\frac k d\right)f_{d'}(q^k)\\
&=\frac{1}{2m}\sum_{k|m}R_{\frac{m}{k}}(q^{2k})\sum_{d|k'}\sum_{j=0}^{v_2(k)}\mu\left(2^{v_2(k)-j}\frac{k'}{d}\right)f_d(q^k)\\
&=\frac{1}{2m}\sum_{k|m}R_{\frac{m}{k}}(q^{2k})\sum_{d|k'}f_d(q^k)\left(\sum_{j=0}^{v_2(k)}\mu\left(2^{v_2(k)-j}\frac{k'}{d}\right)\right)\\
\end{align*}

Since in the above sum $k'/d$ is odd, the sum on the right hand side
equals $0$ unless $v_2(k)=0$.  Hence  
\begin{align*}
V_{2m}(q)&=\frac{1}{2m}\sum_{\{k|m,\,k \text{ odd}\}}R_{\frac{m}{k}}(q^{2k})\sum_{d|k}\mu\left(\frac k d\right)f_d(q^k)\\
\end{align*}
By the M\"obius inversion formula we have 
$$
\sum_{d|k}\mu\left(\frac k d\right)f_d(q)=g_k(q).
$$
Hence 
\begin{align*}
V_{2m}(q)&=\frac{1}{2m}N'_1(q)\sum_{\{k|m,\,k \text{ odd}\}}\mu(k)R_{\frac{m}{k}}(q^{2k})\\
&=\frac{1}{2m}N'_1(q)\sum_{k|m'}\mu(k)R_{\frac{m}{k}}(q^{2k})\\
&=\frac 1 2N'_1(q)\left(\sum_{j=0}^{v_2(m)}\frac{1}{2^j}H_{1,m/2^j}(q^{2^{j+1}})\right),
\end{align*}
from which we deduce the second identity.

By Proposition \ref{orbits} (iii) we have
$$
F_\infty=\left(\prod_{d\geq 1}\Omega_\infty(q^d,T^{2d})^{\frac 1 2\tilde{N}_d^\#(q) }\right)\cdot\left(\prod_{n\geq 1}\Omega_\infty(q^{2n},T^{4n})^{-\frac 1 2 \tilde{N}_{(n,2n)}(q)}\right).
$$
where $\tilde{N}_d^\#(q)$ is the number of $F$-orbits in $X(\oF_q)-X^\sigma(\oF_q)$ of size $d$.
Hence

$$
\Log\,(F_\infty)=\frac 1 2 N_1^\#(q)\,\Log\,\left(\Omega_\infty(q,T^2)\right)+\Log\,\left(\prod_{n\geq 1}\Omega_\infty(q^{2n},T^{4n})^{-\frac 1 2 \tilde{N}_{(n,2n)}(q)}\right).
$$
The  result is then a consequence of the fact that 
$$\Log\, \Omega_\infty (q,T^2)=\sum_{m\geq 1}H_{\infty,m}(q) \, T^{2m}.$$
\end{proof}

\subsection{Proof of Theorem~\ref{non-orient-thm}}
\label{non-orient-pf}

By Theorem \ref{Mass} we have
\begin{equation}
\label{Frobenius-Schur}
  \frac{\left|\left\{(x_1,\ldots,x_r)\in \G(\F_q)^r \,|\, x_1^2\cdots
    x_r^2=1\right\}\right|}{|\G(\F_q)|}=\sum_{\chi\in\widehat{\G(\F_q)}_{\rm real}}  
  \left(\frac{|\G(\F_q)|}{\chi(1)}\right)^{r-2}.
\end{equation}

To describe the real characters of $\G(\F_q)$ we will use the idea of
colorings on varieties~\cite{RV}. Let $\sigma \in \Aut(\Gm)$ be the
involution $\sigma(x):=x^{-1}$ We have an action of
$\Gamma:=\Z \times \Z/2\Z$ on $\Gm(\oF_q)$, where the first factor
acts via the Frobenius $F$ and the second via $\sigma$. These orbits
are described in \S \ref{tN}. However note that here we have
$\Gm^\sigma=\{1,-1\}$, and so in the case i) we have only two orbits
of the kind $(0,1)$ which are $\{1\}$ and $\{-1\}$.  For $\nu$ a given
$\Gamma$-degree let $\tilde N_\nu(q)$ be the number of $\Gamma$-orbits
of $\Gamma$-degree $\nu$.  The following formulae for these quantities
hold (see Proposition \ref{orbits}).
\begin{proposition}
(i) 
$$
\tilde N_{(0,1)}=2
$$

(ii)
$$
\tilde N_{(r,2r)}=\frac1{2r}\sum_{s\mid r,\; r/s \text{ \rm odd}}\mu\left(\frac r
  s \right) (q^s-1)
$$

(iii) 
$$
\tilde N_{(\infty,d)} = \frac 1{2} \tilde{N}_d^\#(q) - 
\begin{cases}
\frac 1 2\tilde N_{(d/2,d)} &\quad \text{ \rm if $d$ is even}\\
0 & \quad \text{ \rm otherwise.}
\end{cases}
$$
with 

$$
\tilde{N}_d^\#(q)=\frac 1{d} \sum_{e\mid d} \mu\left(\frac d
    e\right)(q^e-3).
    $$
\end{proposition}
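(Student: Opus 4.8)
The plan is to recognize the statement as the specialization of Proposition~\ref{orbits} to the concrete triple $(X,F,\sigma)=(\Gm,\ \text{$q$-power Frobenius},\ x\mapsto x^{-1})$, so that all that is needed is to evaluate the auxiliary quantities $N_d$, $N'_r$, $N^\#_d$ of \S\ref{tN} in this case and substitute. Since $\charx\neq 2$, we have $\Gm^\sigma=\{x\mid x^2=1\}=\{1,-1\}$, two distinct points, both already defined over $\F_q$ and hence fixed by every power $F^d$. Therefore $N_d=2$ for all $d\geq 1$, and M\"obius inversion (as in Proposition~\ref{orbits}(i)) gives $\tilde N_{(0,d)}=\tfrac1d\sum_{r\mid d}\mu(d/r)\,N_r=\tfrac2d\sum_{r\mid d}\mu(d/r)$, which equals $2$ when $d=1$ and $0$ otherwise. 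In particular $\tilde N_{(0,1)}=2$, proving (i); this also reflects the obvious fact that the only $F$-orbits contained in $\Gm^\sigma$ are the two singletons $\{1\}$ and $\{-1\}$.

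For (ii) I would compute $N'_r=|\{x\in\Gm\setminus\{\pm1\}\mid x^{q^r}=x^{-1}\}|$. The condition rewrites as $x^{q^r+1}=1$, an equation with exactly $q^r+1$ roots in $\oF_q$ because $\gcd(q^r+1,q)=1$. As $q$ is odd, $q^r+1$ is even, so both $1$ and $-1$ are roots; excluding them yields $N'_r=q^r-1$. Plugging this into the formula of Proposition~\ref{orbits}(ii) gives precisely the asserted expression for $\tilde N_{(r,2r)}$.

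For (iii), first note $N^\#_d=|\{x\in\Gm\setminus\{\pm1\}\mid F^d(x)=x\}|=|\Gm(\F_{q^d})|-|\Gm^\sigma|=(q^d-1)-2=q^d-3$. The M\"obius inversion recorded in Proposition~\ref{orbits}(i) for $\tilde N^\#_d$ then produces $\tilde N^\#_d(q)=\tfrac1d\sum_{e\mid d}\mu(d/e)(q^e-3)$, and Proposition~\ref{orbits}(iii) applies verbatim to express $\tilde N_{(\infty,d)}$ in terms of $\tilde N^\#_d$ and $\tilde N_{(d/2,d)}$, completing (iii).

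The only delicate point --- and the closest thing to an obstacle in an otherwise purely mechanical argument --- is the bookkeeping of the two exceptional points $\pm1$: they belong to $\Gm^\sigma$, so they must be removed from the sets defining $N'_r$ and $N^\#_d$; yet one must also check that they do satisfy the defining equations there ($x^{q^r+1}=1$, where one uses that $q$ is odd, and $x^{q^d}=x$), so that in each case exactly two elements are subtracted. Once this is handled, everything reduces to substitution into Proposition~\ref{orbits} together with standard M\"obius inversion.
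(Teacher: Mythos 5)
Your proposal is correct and is essentially the paper's own argument: the paper proves this proposition simply by citing Proposition~\ref{orbits} and specializing to $(\Gm, F, \sigma)$ with $\Gm^\sigma=\{\pm1\}$, which yields $N_d=2$, $N'_r=q^r-1$, and $N_d^\#=q^d-3$ exactly as you compute. Your careful check that $\pm1$ satisfy $x^{q^r+1}=1$ (using $q$ odd) and $F^d(x)=x$, so that precisely two points are excluded each time, is the only substantive verification needed, and you handle it correctly.
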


A real character of $\G(\F_q)$ is uniquely described by a map
$\Lambda$ from $\Gamma$-orbits (technically on the dual of $\Gm(\oF_q)$ but for convenience we will still think of $\Gm(\oF_q)$) to
the set of partitions such that
$$
|\Lambda|:=\sum_{\gamma\in \Gm/\Gamma}|\gamma|\cdot|\Lambda(\gamma)|=n.
$$

Recall that for an integer $\varrho$ we defined  (see~\eqref{Z-defn})
$$
Z_\varrho(q,T)=\sum_\lambda \calH_\lambda(q)^\varrho \, T^{|\lambda|},
$$
where $\calH_\lambda(q)$ is the hook polynomial~\eqref{hook-polyn}, and
$\{V_{\varrho,n}(q)\}_n$ by the formula (see~\eqref{V_n-defn})
$$
\sum_{n\geq 1}V_{\varrho,n}(q) T^n:=\Log\,\left(Z_\varrho(q,T)\right).
$$
Note that $V_{\varrho,n}(q)$  is a rational function of $q$ for
$\varrho<0$ and a Laurent polynomial for $\varrho\geq 0$.

 By the Mass Formula \eqref{Frobenius-Schur} we have
\begin{equation}
\label{M_n-defn}
  M_{\varrho,n}(q):= \frac{\left|\left.\left\{(x_1,\ldots,x_r)\in
    \G(\F_q)^r\,\right|\, 
    x_1^2\cdots     x_r^2=1\right\}\right|}{|\G(\F_q)|}=
  \sum_{\{\Lambda:\Gm/\Gamma\rightarrow\calP,\,|\Lambda|=n\}} 
  \left(\frac{|\G(\F_q)|}{\chi_\Lambda(1)}\right)^\varrho, 
\end{equation}
with $\varrho=r-2$.  Using the formula for $\chi(1)$ in terms of hook
polynomials (see for instance~\cite[IV, 6.7]{macdonald}) we obtain the
following.
$$
q^{-\varrho \binom n 2}M_{\varrho,n}(q)=
\sum_{\{\Lambda:\Gm/\Gamma\rightarrow\calP,\,|\Lambda|=n\}}  
\, \calH_\Lambda(q)^\varrho, 
$$

By Formula~\eqref{W} we obtain the following
\begin{proposition}
\begin{equation}
\label{M-fmla}
M_\varrho(q,T)=  1+\sum_{n\geq 1}q^{-\varrho \binom n2} M_{\varrho,n}(q)\,T^n=
\prod_{d\geq 1}Z_\varrho(q^d,T^d)^{\tilde N_{(0,1)}(q)}
\prod_{r\geq  1}Z_\varrho(q^{2r},T^{2r})^{\tilde N_{(r,2r)}(q)}
\prod_{d\geq  1}Z_{2\varrho}(q^d,T^{2d})^{\tilde N_{(\infty,d)}(q)}.
\end{equation}
\end{proposition}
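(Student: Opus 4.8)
The plan is to obtain the Proposition by a direct specialization of Formula~\eqref{W}. Take $X=\Gm$ with the $\Gamma=\Z\times\Z/2\Z$-action fixed above (the Frobenius $F$ on the first factor and $\sigma\colon x\mapsto x^{-1}$ on the second), and take the weight function $W_\lambda(q):=\calH_\lambda(q)^\varrho$, so that the two auxiliary series occurring in~\eqref{W} are exactly $Z=Z_\varrho$ and $Z_2=Z_{2\varrho}$ by~\eqref{Z-defn}; the orbit counts $\tilde N_{(0,d)}(q)$, $\tilde N_{(r,2r)}(q)$, $\tilde N_{(\infty,d)}(q)$ are then the ones computed, for this particular $X$, via Proposition~\ref{orbits}.

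With these choices I would first identify the left-hand side of~\eqref{W} with $M_\varrho(q,T)$. Combining the Mass formula~\eqref{Frobenius-Schur} with the description of the real characters of $\G(\F_q)$ by colorings $\Lambda\colon\Gm/\Gamma\to\calP$ recalled in~\S\ref{non-orient-pf} gives $q^{-\varrho\binom n2}M_{\varrho,n}(q)=\sum_{|\Lambda|=n}\calH_\Lambda(q)^\varrho$. The content to be checked is that $\calH_\Lambda(q)^\varrho$ is precisely the coloring-weight $W_\Lambda(q)$ appearing in~\eqref{W}, that is, that the hook factor $\calH_\Lambda(q)$ extracted from the Macdonald formula for $\chi_\Lambda(1)$ factors orbit by orbit as
$$
\calH_\Lambda(q)=\prod_{\gamma\in O_0}\calH_{\Lambda(\gamma)}(q^{|\gamma|})\prod_{\gamma\in O_1}\calH_{\Lambda(\gamma)}(q^{|\gamma|})\prod_{\gamma\in O_\infty}\calH_{\Lambda(\gamma)}(q^{|\gamma|/2})^2.
$$
For orbits of the first two kinds this is immediate from the degree formula; an orbit of the third kind is a $\sigma$-conjugate pair of $F$-orbits of common degree $|\gamma|/2$ carrying the same partition $\Lambda(\gamma)$, and so contributes the square $\calH_{\Lambda(\gamma)}(q^{|\gamma|/2})^2$, matching the exponent $2$ in the definition of $W_f$ preceding~\eqref{W}. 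Granting this, the left-hand side of~\eqref{W} is $\sum_{n\geq 0}q^{-\varrho\binom n2}M_{\varrho,n}(q)T^n=M_\varrho(q,T)$.

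It remains to match the three infinite products on the right. The middle and last become $\prod_{r\geq1}Z_\varrho(q^{2r},T^{2r})^{\tilde N_{(r,2r)}(q)}$ and $\prod_{d\geq1}Z_{2\varrho}(q^d,T^{2d})^{\tilde N_{(\infty,d)}(q)}$ verbatim. For the first product one uses that $\Gm^\sigma=\{1,-1\}$ consists of two $F$-fixed points, so that $N_d=2$ for every $d$; by Proposition~\ref{orbits}(i) this forces $\tilde N_{(0,d)}(q)=0$ for $d>1$ and $\tilde N_{(0,1)}(q)=2$, whence $\prod_{d\geq1}Z_\varrho(q^d,T^d)^{\tilde N_{(0,d)}(q)}=Z_\varrho(q,T)^{\tilde N_{(0,1)}(q)}$. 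Assembling the three pieces yields~\eqref{M-fmla}.

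The only genuine work, and the main obstacle, is the bookkeeping of the second paragraph: verifying that the Macdonald degree formula for the real irreducible characters of $\GL_n(\F_q)$, together with the normalizing power $q^{\varrho\binom n2}$ from~\eqref{M-defn}, reorganizes exactly into the product defining $W_\Lambda(q)$ — in particular getting the exponent $2$ on the type-$(\infty,d)$ orbits right and checking the compatibility of the overall $q$-power. Once that dictionary is in place, the Proposition is a purely formal consequence of~\eqref{W}.
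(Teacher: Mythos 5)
Your proposal is correct and follows essentially the same route as the paper: the paper likewise derives \eqref{M-fmla} by combining the Mass formula and the coloring description of the real characters of $\G(\F_q)$ (giving $q^{-\varrho\binom n2}M_{\varrho,n}(q)=\sum_{|\Lambda|=n}\calH_\Lambda(q)^\varrho$) and then invoking Formula~\eqref{W} with $W_\lambda=\calH_\lambda^\varrho$, so that $Z=Z_\varrho$ and $Z_2=Z_{2\varrho}$. You supply more detail than the paper's one-line proof — in particular the orbit-by-orbit factorization of $\calH_\Lambda$ and the collapse of the first product via $\Gm^\sigma=\{1,-1\}$ — but the argument is the same.
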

Now we  can complete the proof of Theorem~\ref{non-orient-thm}: (ii)
follows combining~\eqref{M-fmla}  with Theorem~\ref{maintheo}; (i) is
a direct consequence of (ii) given the above observation on the nature
of $V_{\varrho,n}(q)$; (iii) is just a restatement of (ii).

\begin{remark}
The statement (Theorem~\ref{non-orient-thm} (i)) that
$M_{\varrho,n}(q)$ is a polynomial in $q$ for $r>1$ also follows  from
the main result of~\cite{GRV} as the  abelianization of the
fundamental group 
$$
\Pi=\left\langle a_1,\dots,a_r\,\left| a_1^2\cdots a_r^2=1\right\rangle\right..
$$
is infinite for $r>1$ (but finite for $r=1$).
\end{remark}

\section{Orientation cover of non-orientable surfaces}

\subsection{Character varieties}\label{charactervar}

Let $r\geq 1$ be an integer, put $\varrho=r-2$ and denote by $\Sigma$
a compact non-orientable surface of Euler characteristic $-\varrho$
(the connected sum of $r$ real projective planes). Consider a set
$S=\{\alpha_1,\dots,\alpha_k\}$ of $k$-points of $\Sigma$ ($k\geq
1$). Fix a base point $b\in \Sigma\backslash S$. The fundamental group
$\Pi=\pi_1(\Sigma\backslash S,b)$ has the presentation 
$$
\Pi=\left\langle a_1,\dots,a_r,x_1,\dots,x_k\, \left|\, a_1^2\cdots a_r^2x_1\cdots x_k=1\right\rangle\right..
$$
Let $K$ be an algebraically closed field and consider a \emph{generic} $k$-tuple  $\mathcal{C}=(C_1,\dots,C_k)$ of semisimple conjugacy classes of $\G=\GL_n(K)$ such that

$$
\prod_{i=1}^k\det(C_i)=1.
$$
 
 The genericity condition  means that for any  $1\leq s<n$, if we select $r$ eigenvalues of $C_i$ for each $i=1,\dots,k$ (possibly with multiplicities), then the product of the $rk$ selected eigenvalues is not equal to $1$.

 We denote by $\sigma$ the automorphism $\G\rightarrow\G$,
 $x\mapsto {^t}x^{-1}$ and $\G^+$ the semi-direct product
 $\G\rtimes \langle \sigma\rangle$. We consider the representation
 variety
\begin{align*}
\calU_\calC&:=\left\{\rho\in\Hom(\Pi,\G^+)\,\left|\,\forall
             j=1,\dots,r\, \forall i=1,\dots,k,\,  \rho(a_j)\in
             \G\sigma, \rho(x_i)\in \iota(C_i) \right\}\right.\\ 
&=\left\{(A_1,\dots,A_r,X_1,\dots,X_k)\in \G^r\times
  C_1\times\cdots\times C_k\,\left|\, A_1\sigma(A_1)\cdots
  A_r\sigma(A_r)X_1\cdots X_k=1\right\}\right..
\end{align*}

Let $\G$ act on $\calU_\calC$ by
$$
A_j\mapsto gA_j\sigma(g^{-1}),\qquad x_i\mapsto gx_ig^{-1}, \qquad  g
\in G
$$
and we consider the quotient stack
$$
\calM_\calC=[\calU_\calC/\G].
$$

\begin{remark} Note that unlike in the situation of~\cite{HLRV}, even
  though  the $k$-tuple $\calC$ is assumed to be generic, the action
  of $\G$ is far from being free. For instance, if $r=1$, $n=2$ and $k=1$
  with the central matrix $X_1=\diag (-1,-1)$ at the puncture, then
  $\diag(i,-i)$, with $i^2=-1$, is a solution to the equation 
$$
A\sigma(A)=X_1
$$
while the stabilizer of the pair $\big((1,-1),(-1,-1)\big)$ is the
torus of diagonal matrices. 
\end{remark}

\begin{remark}
The character stack $\calM_\calC$ has an alternative description that can
be found in ~\cite{cheng}\cite{FS} and we now describe briefly. Denote by
$p:\tilde{\Sigma}\rightarrow\Sigma$ the orientation covering of
$\Sigma$. This is an unramified double covering with $\tilde{\Sigma}$
a compact Riemann surface of genus $g=r-1$. Considering the
homomorphism ${\rm Gal}(p)\rightarrow {\rm Aut}(\G)$ that maps the
non-trivial element of the Galois group of $p$ to $\sigma$; let us
identify ${\rm Gal}(p)$ with $\langle \sigma\rangle$.  The orientation
character $\chi:\Pi\rightarrow \langle \sigma\rangle$ maps a loop that
preserves the orientation to $1$ and a loop that reverses the
orientation to $\sigma$. It thus maps $a_j$ to $\sigma$ and $x_i$ to
$1$. Denote by $\tilde{\Pi}$ the fundamental group of
$\tilde{\Sigma}\backslash p^{-1}(S)$ with some base point $\tilde{b}$
above $b$.

A representation $\rho\in \calU_{\overline{\calC}}$ defines (by restriction) a representation $\tilde{\rho}:\tilde{\Pi}\rightarrow\G$ making the following diagram commutative
\begin{equation}
\xymatrix{1\ar[r] 
&\tilde{\Pi}\ar[r]^{p_*}\ar[d]^{\tilde{\rho}}
&\Pi\ar[r]^-\chi\ar[d]^{\rho}&\langle
  \sigma\rangle\ar[r]\ar[d]^{{\rm Id}}&1\\ 
1\ar[r]&\G\ar[r]^\iota&\G^+\ar[r]^\pi
&\langle \sigma\rangle\ar[r]&1} 
\label{diag}\end{equation}
In order to understand local monodromies we need to choose the
generators of $\Pi$ precisely. For $i=1,\dots,k$, we let $D_i$ be a
small open neighbourhood (homeomorphic to an open disc in $\C$) of
$\alpha_i$ in $\Sigma$,  such that $p$ is trivial over $D_i$. Let
$\beta_i$ be a point in  $D_i$   and let $\lambda_i$ be a path from
$x$ to $\beta_i$. We choose a single loop $\ell_i$ in $D_i$  based at
$\beta_i$ around $\alpha_i$ and we take the generator $x_i$ of $\Pi$
to be $\lambda_i^{-1}\ell_i\lambda_i$.  
The path $\lambda_i$ lifts to a unique path $\tilde{\lambda}_i$ from
$\tilde{b}$ to some point $\tilde{\beta}_{i1}$ above $\beta_i$. We let
$\tilde{D}_{i1}$ be the connected component of $p^{-1}(D_i)$
containing $\tilde{\beta}_{i1}$, $\tilde{\alpha}_{i1}$ the unique
point above $\alpha_i$ in $\tilde{D}_{i1}$ and $\tilde{\ell}_{i1}$ the
loop in $\tilde{D}_{i1}$ based at $\tilde{\beta}_{i1}$ around
$\tilde{\alpha}_{i1}$ which lifts $\ell_i$. Then we let $x_{i1}$ be
the generator $\tilde{\lambda}_i^{-1}\tilde{\ell}_i\tilde{\lambda}_i$
of $\tilde{\Pi}$. Let $\gamma_\sigma\in \Pi$ be a loop reversing the
orientation. If instead of the path $\lambda_i$ from $x$ to $\beta_i$
we choose the path $\lambda_i\gamma_\sigma$, then we get into the
other connected component of $p^{-1}(D_i)$ with punctures
$\alpha_{i2}$ and we obtain an other generator $x_{i2}$ of
$\tilde{\Pi}$ which is mapped to $\gamma_\sigma^{-1} x_i\gamma_\sigma$
via $p$.  

Therefore if $\rho(x_i)\in \iota(C_i)$ then
$\tilde{\rho}(x_{i1})\in C_i$ and
$\tilde{\rho}(x_{i2})\in\sigma(C_i)$.  Put $C_{ij}=C_i$ if $j=1$ and
$\sigma(C_i)$ if $j=2$, and consider the representation variety
$$
\tilde{\calU}_\calC:=\left\{\tilde{\rho}\in \Hom(\tilde{\Pi},\G)\,\left|\, \tilde{\rho}(x_{ij})\in \iota(C_{ij})\right\}\right.
$$
Say that a pair $(\tilde{\rho}, h_\sigma)\in \tilde{\calU}_\calC\times G$ is $\sigma$-invariant if  we have

$$
\begin{cases}h_\sigma \tilde{\rho}(z)h_\sigma^{-1}=\sigma\left(\tilde{\rho}(\gamma_\sigma^{-1}z\gamma_\sigma)\right)\hspace{.5cm}\text{ for all }z\in\tilde{\Pi}\\
\tilde{\rho}(\gamma_\sigma^2)=h_\sigma^{-1}\sigma(h_\sigma^{-1})\end{cases}.
$$
If the two above conditions are satisfied then the representation $\tilde{\rho}$ can be extended to an homomorphism $\rho:\Pi\rightarrow \G^+$ making the diagram (\ref{diag}) commutative. The group $\G$ acts on the space $\tilde{\calU}_{\calC,\sigma}$ of $\sigma$-invariant pairs as follows

$$
g\cdot (\tilde{\rho},h_\sigma)=(g\cdot\tilde{\rho}, \sigma(g)h_\sigma g^{-1}),
$$
where $g\cdot\tilde{\rho}$ is the representation obtained by composing $\tilde{\rho}$ with  the conjugation by $g$.  

Then~\cite[Theorem 2.2.1]{cheng} we have an isomorphism

$$
\calM_\calC\simeq [\tilde{\calU}_{\calC,\sigma}/\G].
$$
The stack $[\tilde{\calU}_{\calC,\sigma}/\G]$ can be described in
terms of  local systems $\mathcal{L}$ on $\tilde{\Sigma}\backslash
p^{-1}(S)$ satisfying $\mathcal{L}\simeq \sigma^*(\mathcal{L}^\vee)$
(where $\mathcal{L}^\vee$ is the dual local system of $\mathcal{L}$)
with local monodromy in $C_{ij}$ at the puncture $\alpha_{ij}$, see
\cite[\S 6.1.2]{cheng} for more details. 
\end{remark}

\subsubsection{The Cauchy function} For any $i=1,\dots,k$, let $\x_i$
be an infinite set of commuting variables and let $\calP$ denotes the
set of partitions.  

As in~\cite{HLRV}, for an integer $r>0$ we introduce the Cauchy function
$$
 \label{cauchygk}
\Omega(z,w)=\Omega_{r,k}(z,w):= \sum_{{\lambda}\in {\cal P}}{\cal
  H}_{r,\lambda}(z,w) \left(\prod_{i=1}^k  
  \tilde{H}_\lambda(\x_i;z^2,w^2)\right),
$$
where 
$$
{\cal H}_{r,\lambda} (z,w):=\prod \frac{(z^{2a+1}-w^{2l+1})^r}
{(z^{2a+2}-w^{2l})(z^{2a}-w^{2l+2})}
$$
is the $(z,w)$-deformed Hook function with exponent $r$ and
$\tilde{H}_\lambda(\x_i,z,w)$ denotes the modified Macdonald symmetric
function in the variables $\x_i$.  

\begin{remark}
As mentioned in~\S\ref{case-ii}, note that unlike the setup
of~\cite{HLRV}, the integer $r$ is not necessarily even. This
introduces a choice of sign as
$$
{\cal H}_{r,\lambda} (w,z)=(-1)^{|\lambda|r}{\cal H}_{r,\lambda'} (z,w)
$$
\end{remark}

For a $k$-tuple of partitions ${\mu}=( \mu^1,\dots,\mu^k)$ of $n$ let
\begin{equation}
\label{H-defn}
\H_\mu(z,w):= (z^2-1)(1-w^2)\left\langle \Log
  \left(\Omega(z,w)\right),h_\mu\right\rangle,   
\end{equation}
where
$h_\mu=h_{\mu^1}(\x_1)\cdots h_{\mu^k}(\x_k)$ and $h_{\mu ^i}(\x_i)$ is the complete symmetric function in the variables $\x_i$.
\bigskip

For  a partition $\lambda$ denotes by $m_\lambda=m_\lambda(\x)$ the corresponding monomial symmetric function in the variables $\x=\{x_1, x_2,\dots\}$.

\begin{lemma} If $r=k=1$, we have
$$
\H_\mu(z,w)=
\begin{cases}
(z-w)m_{(1)}&  n=1\\
\frac 1{z^2+1}m_{(2)} + m_{(1^2)}  &   n=2.
\end{cases}
$$

\label{r=k=1}\end{lemma}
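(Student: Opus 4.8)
The plan is to compute $\H_\mu(z,w)$ directly from its definition~\eqref{H-defn} by evaluating $\Log\left(\Omega_{1,1}(z,w)\right)$ modulo terms of degree $\geq 3$ in the $\x$-variables, since only the coefficients of $h_{(1)}$ and $h_{(2)},h_{(1^2)}$ are needed. First I would isolate the relevant partitions: the term indexed by $\lambda$ in $\Omega_{1,1}(z,w)$ contributes a symmetric function of degree $|\lambda|$, so the degree-$\leq 2$ part of $\Omega$ involves only $\lambda\in\{\,\emptyset,\,(1),\,(2),\,(1^2)\,\}$. For each of these I would write down $\calH_{1,\lambda}(z,w)$ from the product formula $\prod \frac{(z^{2a+1}-w^{2l+1})}{(z^{2a+2}-w^{2l})(z^{2a}-w^{2l+2})}$ over the boxes (so over arm-leg pairs $(a,l)$), and the modified Macdonald function $\tilde H_\lambda(\x;z^2,w^2)$ expanded in the monomial basis: $\tilde H_{(1)}=m_{(1)}$, and $\tilde H_{(2)}$, $\tilde H_{(1^2)}$ are the known degree-two modified Macdonald polynomials, linear combinations of $m_{(2)}$ and $m_{(1^2)}$ with coefficients in $\Q(z^2,w^2)$.

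Next I would assemble $\Omega = 1 + A\,m_{(1)} + \big(B\,m_{(2)} + C\,m_{(1^2)}\big) + (\text{deg}\geq 3)$ with $A,B,C\in\Q(z,w)$ explicit, and then apply $\Log$. Recall from~\S2.1 that $\Log=\Psi^{-1}\circ\log$; working to degree $2$ in $\x$, the Adams operation $\psi_d$ for $d\geq 2$ only affects degree $\geq 2$ via $m_\lambda\mapsto$ power-sum-rescaled monomials of higher total degree, so the only correction at degree $2$ comes from $\psi_2$ acting on the degree-$1$ part. Concretely, $\log(\Omega)= A\,m_{(1)} + \big(B\,m_{(2)}+C\,m_{(1^2)}\big) - \tfrac12 A^2 m_{(1)}^2 + \cdots$, then $m_{(1)}^2 = m_{(2)} + 2m_{(1^2)}$ as symmetric functions, and $\Psi^{-1}$ subtracts $\tfrac12\psi_2$ of the degree-$1$ term: $\psi_2(A(z,w)m_{(1)}) = A(z^2,w^2)\,p_2 = A(z^2,w^2)(m_{(2)})$ since $p_2 = m_{(2)}$. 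Collecting, the $h_{(1)}$-coefficient of $\Log\Omega$ is $A$, the $h_{(2)}$-coefficient is read off the $m_{(2)}$-part, and the $h_{(1^2)}$-coefficient off the $m_{(1^2)}$-part (using $\langle m_\lambda, h_\mu\rangle = \delta_{\lambda\mu}$ under the Hall pairing, or rather the appropriate duality; I would be careful here about which monomial/complete-homogeneous duality is in force, since $\H_\mu$ is defined via $\langle\,\cdot\,,h_\mu\rangle$).

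Finally I would multiply by the prefactor $(z^2-1)(1-w^2)$ and simplify. For $n=1$ this should collapse to $(z-w)m_{(1)}$ after the rational function $A(z,w)$ — which by the box formula for $\lambda=(1)$ is $\calH_{1,(1)}(z,w)=\frac{z-w}{(z^2-1)(1-w^2)}$ — is multiplied back by $(z^2-1)(1-w^2)$; this is the easy case and is essentially a one-line check. The main obstacle will be the $n=2$ computation: correctly normalizing the modified Macdonald polynomials $\tilde H_{(2)}$ and $\tilde H_{(1^2)}$ (conventions differ by which specialization and which plethystic twist one uses), getting the signs in the $(z,w)$-deformed hook function right for $r=1$ odd (the paper emphasizes $\calH_{r,\lambda}(w,z)=(-1)^{|\lambda|r}\calH_{r,\lambda'}(z,w)$, so $(2)$ and $(1^2)$ are swapped with a sign), and then verifying that after the $\Log$ and the prefactor the $m_{(2)}$-coefficient simplifies exactly to $\frac{1}{z^2+1}$ while the $m_{(1^2)}$-coefficient simplifies to $1$ — the cancellation of the apparent denominators like $(z^2-1)$, $(w^2-1)$, $(z^2-w^2)$ is the delicate bookkeeping step. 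I would double-check the final answer against the combinatorial identity stated as Conjecture~\ref{0conj} in the introduction (specialized appropriately), which predicts exactly these two coefficients.
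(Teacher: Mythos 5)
Your plan is exactly the computation the paper implicitly relies on (the lemma is stated without proof), and it is correct: the degree-$\le 2$ truncation of $\Omega_{1,1}$, the correction $-\tfrac12\psi_2$ of the degree-one part of $\log\Omega$ with $\psi_2(m_{(1)})=p_2=m_{(2)}$, the identity $m_{(1)}^2=m_{(2)}+2m_{(1^2)}$, and the duality $\langle m_\lambda,h_\mu\rangle=\delta_{\lambda\mu}$ are all the right ingredients, and the $n=1$ case is complete as you say. The normalization you were unsure of is $\tilde H_{(2)}(\x;z^2,w^2)=m_{(2)}+(1+z^2)m_{(1^2)}$ and $\tilde H_{(1^2)}(\x;z^2,w^2)=m_{(2)}+(1+w^2)m_{(1^2)}$, and with these the deferred rational-function simplification does close up: the $m_{(1^2)}$-coefficient collapses to $1$ via $\dfrac{z^2+w^2-z^2w^2-1}{(z^2-1)(1-w^2)}=1$, and the $m_{(2)}$-coefficient to $\dfrac{1}{z^2+1}$.
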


\begin{conjecture}
\label{comb-conj}
If  $r=1$ and $k=1$. We have $\H_\mu(z,w)=0$ for $n\geq2$.
Equivalently,
$$
(z^2-1)(1-w^2)\Log\left(  \sum_{\lambda\in \P}
\prod \frac{(z^{2a+1}-w^{2l+1})}
{(z^{2a+2}-w^{2l})(z^{2a}-w^{2l+2})}
  \tilde{H}_\lambda(\x;z^2,w^2)\right)
=(z-w)m_{(1)}(\x)+  \frac 1{z^2+1}m_{(2)}(\x) + m_{(1^2)}(\x).
$$
\label{0conj}
\end{conjecture}
(The equivalence follows from Lemma~\ref{r=k=1}.)


\subsubsection{Proof  of  Theorem~\ref{orient-thm}}
\label{orient-pf}

Assume that $K=\overline{\F}_q$ and we consider on $\G$ the
$\F_q$-structure induced by the Frobenius that raises coefficients of
matrices to their $q$-th power. We also  assume that the eigenvalues
of the conjugacy classes $C_1,\dots,C_k$ are in $\F_q$.  

As $\G$ is connected we have
$$
|\calM_\calC(\F_q)|=\frac{|\calU_\calC(\F_q)|}{|\G(\F_q)|}=\frac{\left|\left\{((A_i)_i,(X_i)_i)\in
      (\G(\F_q))^r\times \prod_{i=1}^kC_i(\F_q)\,\left|\,
        \prod_{i=1}^rA_i\sigma(A_i)
        \prod_{i=1}^kX_i=1\right\}\right.\right|}{|\G(\F_q)|}.  
$$

For each $i=1,\dots,k$, let $\mu^i=(\mu^i_1,\mu^i_2,\dots)$ be the
partition of $n$ given by the multiplicities of the eigenvalues of the
conjugacy class $\calC_i$ and let $\mu$ be the $k$-tuple
$(\mu^1,\dots,\mu^k)$. 
\begin{theorem}
The stack $\calM_\calC$ has  rational count. More precisely,
$$
|\calM_\calC(\F_q)|=\frac{q^{d_\mu/2}}{q-1}\H_\mu\left(\sqrt{q},\frac{1}{\sqrt{q}}\right).
$$
with $d_\mu=n^2(r-2+k)+2-\sum_{i,j}(\mu^i_j)^2$.
\label{maintheo1}\end{theorem}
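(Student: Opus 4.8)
The plan is to reduce the point count of $\calM_\calC$ over $\F_q$ to a sum over the irreducible characters of $\G(\F_q)=\GL_n(\F_q)$ by means of the Mass Formula, and then to evaluate that sum by the symmetric-function machinery of~\cite{HRV,HLRV}, which carries over to the present setting once the exponent ``$2g-2$'' there is replaced by $\varrho=r-2$.

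\emph{Step 1 (reduction to a character sum).} Since $\G$ is connected, $|\calM_\calC(\F_q)|=|\calU_\calC(\F_q)|/|\G(\F_q)|$, and $\calU_\calC(\F_q)=\Hom_\calC^\sigma(\Pi,\G^+(\F_q))$ in the notation of Theorem~\ref{Masstheo} with $\varepsilon_i=\sigma$ for all $i$. Applying that theorem in the case $\varepsilon=\sigma$ yields
$$
|\calM_\calC(\F_q)|=\sum_{\chi\in\widehat{\G(\F_q)}}\left(\frac{|\G^F|}{\chi(1)}\right)^{\varrho}\prod_{i=1}^{k}\frac{\chi(C_i^F)\,|C_i^F|}{\chi(1)}.
$$
Through Gow's identity $\eta^\sigma=\sum_\chi\chi$ (the $r=k=1$ case of the Mass Formula), the orientation-reversing generators $a_1,\dots,a_r$ of $\Pi$ have been absorbed entirely into the single exponent $\varrho=r-2=-\chi(\Sigma)$, so this is formally the character sum of~\cite{HLRV} for a punctured Riemann surface of Euler characteristic $-\varrho$, with $2g-2$ replaced by $\varrho$ --- which here need not be even.

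\emph{Step 2 (evaluation of the sum).} Here one invokes Green's classification of the irreducible characters of $\GL_n(\F_q)$ and its translation into symmetric functions (Macdonald), organizing the characters by combinatorial type. The genericity hypothesis on $\calC$ is used exactly as in~\cite{HRV,HLRV}: it forces every non-generic type to contribute $0$ to the sum, so that the sum collapses onto the ``fully generic'' contribution, which, assembled over all $n$ and all eigenvalue-multiplicity data into a generating function, is precisely the Cauchy function $\Omega_{r,k}(z,w)$ (the factors $\tilde{H}_\lambda(\x_i;z^2,w^2)$ encoding, under the $E$-specialization below, the values $\chi(C_i^F)$ on the semisimple classes). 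Isolating the part attached to the prescribed multiplicity partitions $\mu=(\mu^1,\dots,\mu^k)$ is achieved by the pairing $\langle\cdot,h_\mu\rangle$, and the passage from the type-graded (hence exponential) generating function down to a single count is exactly what introduces $\Log$. The bridge back to Step~1 is the $E$-specialization~\eqref{euler-spec-hook-pol}: at $z=\sqrt q$, $w=1/\sqrt q$ the $(z,w)$-hook factor $\calH_{r,\lambda}(z,w)$ becomes $q^{-\tfrac12\varrho\langle\lambda,\lambda\rangle}H_\lambda(q)^\varrho$, which up to an explicit power of $q$ equals $(|\G^F|/\chi(1))^\varrho$; and because this specialization does not exchange $z$ and $w$, the parity-of-$r$ sign flagged in~\S\ref{case-ii} never intervenes.

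\emph{Step 3 (bookkeeping of the $q$-power).} It remains to collect the explicit monomials and $(q-1)$-factors produced by~\eqref{euler-spec-hook-pol}, by $\chi(1)$ written in hook form, by $|C_i^F|=|\G^F|/\prod_j|\GL_{\mu^i_j}(\F_q)|$, by $|\G^F|$, and by the normalization $(z^2-1)(1-w^2)$ in the definition~\eqref{H-defn} of $\H_\mu$ (which at the specialization equals $(q-1)^2/q$). A direct computation shows they combine into the prefactor $q^{d_\mu/2}/(q-1)$ with $d_\mu=n^2(r-2+k)+2-\sum_{i,j}(\mu^i_j)^2$, giving $|\calM_\calC(\F_q)|=\frac{q^{d_\mu/2}}{q-1}\H_\mu(\sqrt q,1/\sqrt q)$. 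Since the resulting expression is a fixed rational function of $q$ depending only on $\mu$ and not on the chosen generic $\calC$, this simultaneously proves the rational-count statement.

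The hard part is Step~2: it is the technical heart, resting on the genericity vanishing theorem for non-generic types in the $\GL_n(\F_q)$ character sum, together with the identification of the $\Log$-and-$h_\mu$ extraction with that sum. Both are established in~\cite{HRV,HLRV} for the exponent $2g-2$, and what has to be checked --- this is where the non-orientability of $\Sigma$ genuinely enters --- is that those arguments are insensitive to the parity of the exponent, so that they go through verbatim with $2g-2$ replaced by the possibly odd $\varrho$, the only place parity could matter (the $z\leftrightarrow w$ sign for $\calH_{r,\lambda}$) being inert under the $E$-specialization. A secondary, purely computational obstacle is pinning down the exponent $d_\mu$ precisely.
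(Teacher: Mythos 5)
Your proposal is correct and follows exactly the route the paper takes: the paper's proof consists of applying the Mass Formula (Theorem~\ref{Masstheo}) to turn $|\calU_\calC(\F_q)|/|\G(\F_q)|$ into the character sum of your Step~1, and then declaring the rest "completely similar to~\cite[Theorem 5.2.1]{HLRV}," which is precisely the genericity-collapse, Cauchy-function, and specialization argument you outline in Steps~2 and~3. Your additional observation that the only parity-sensitive point (the $z\leftrightarrow w$ sign in $\calH_{r,\lambda}$) is inert under the specialization $z=\sqrt q$, $w=1/\sqrt q$ is exactly the check needed to justify the paper's citation of the even-exponent case.
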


\begin{proof}Thanks to Theorem \ref{Masstheo}, the proof is completely
  similar to that of~\cite[Theorem 5.2.1]{HLRV}. 
\end{proof}

Now Theorem~\ref{orient-thm} follows from Theorem~\ref{ratnl-count}.

\begin{remark} If $r=2h$ is even, let $\Sigma'$ be a compact Riemann
  surface of Euler characteristic $r-2$ and a subset
  $S'=\{\alpha_1',\dots,\alpha_k'\}\subset\Sigma'$. Consider the
  stacky character variety 

$$
\calM_\calC'=\left[\left\{\rho\in\Hom\big(\pi_1(\Sigma'\backslash S')\big),\G\,\left|\, \rho(z_i)\in C_i\right\}\right./\G\right].
$$
where $z_i$ is a single loop around the puncture $\alpha_i'$. Then by
Theorem~\ref{maintheo1} and~\cite[Theorem 1.2.3]{HLRV} we have 
$$
\E(\calM_\calC';x)=\E(\calM_\calC;x).
$$
\end{remark}

\subsection{Mixed Poincar\'e series in the case $r=k=1$}

\label{conj-r=k=1}
\begin{theorem} Assume that $r=k=1$. Then $\calM_\calC$ is empty unless
$n=1,2$ in which case we have
\begin{equation}
\label{small-conj}
\rmH_c(\calM_\calC;q,t)=\frac{\big(t\sqrt{q}\big)^{d_\mu}}{qt^2-1}
\mathbb{H}_\mu\left(t\sqrt{q},-\frac{1}{\sqrt{q}}\right).  
\end{equation}
\label{r=k=1evid}\end{theorem}
\begin{proof}
Consider  the solutions to the equation
\begin{equation}
\label{r=1-eqn}
A\sigma(A)=X, \qquad A\in \G,
\end{equation}
where $X:=\diag(\xi_1,\ldots,\xi_n)$. (For a calculation of the number
of solutions for arbitrary $X$ see~\cite{FG}.)

For $n=1$, we always have $A\sigma(A)=1$ and so the space
$\calM_\calC$ is nothing but the quotient stack
$[\mathbb{G}_m/\mathbb{G}_m]$ for the trivial action of $\mathbb{G}_m$
on itself. The later is isomorphic to
$\mathbb{G}_m\times {\rm B}\mathbb{G}_m$. The mixed Poincar\'e
polynomial of $\mathbb{G}_m$ is $t+qt^2$ and the mixed Poincar\'e
series of the classifying stack ${\rm B}\mathbb{G}_m$ is $1/(qt^2-1)$
\cite[9.1.1, 9.1.4]{HodgeIII}, therefore by K\"unneth formula
$$
\rmH_c(\calM_\calC;q,t)=\frac{t+qt^2}{qt^2-1},
$$
and so~\eqref{small-conj} is true when $r=k=n=1$ by Lemma \ref{r=k=1}.

Going back to Equation~\eqref{r=1-eqn}, if $A=(a_{i,j})$ this amounts to
solving the equations $\xi_ia_{j,i}=a_{i,j}$ for
$i,j=1,\ldots,n$. Hence if $\xi_i\neq 1$ then $a_{i,i}=0$. The
equations imply $\xi_i\xi_ja_{i,j}=a_{i,j}$.  If $x$ is generic and
$n>2$ we do not have $\xi_i\xi_j=1$ for $i\neq j$ or
$\xi_i=1$. Therefore the only solution is identically zero
and~\eqref{r=1-eqn} has no solution in $\G$.

  For $n=2$ we have two possibilities for generic $X$: 
 
 (i) $\xi_1=\xi_2=-1$ and $A=\left(\begin{array}{cc}0&\alpha\\-\alpha&
    0\end{array}\right)$, with $\alpha$ non-zero or 
    
   (ii) $\xi_1=\xi,\xi_2=\xi^{-1}$ for some $\xi\neq \pm
  1$ and $A=\left(\begin{array}{cc}0&\xi \alpha\\\alpha&0\end{array}\right)$, with
  $\alpha$ non-zero.

In case (i), we see from the above calculation that $\calM_\calC$ is
the quotient stack $[\mathbb{G}_m/\GL_2]$ where $\GL_2$ acts on
$\mathbb{G}_m$ by the determinant. Writing $\mathbb{G}_m$ as the
quotient $\GL_2/\SL_2$, we find that $[\mathbb{G}_m/\GL_2]$ is the
classifying stack ${\rm B}(\SL_2)$ whose mixed Poincar\'e series is
$\frac{1}{qt^2(qt^2-1)(qt^2+1)}$ by~\cite[9.1.1,
9.1.4]{HodgeIII}. Now~\eqref{small-conj} follows from Lemma~\ref{r=k=1}.

In case (ii), the stack $\calM_\calC$ is isomorphic to the stack
$[\mathbb{G}_m/{\rm T}_2]$, where the group ${\rm T}_2\subset\GL_2$
of diagonal matrices acts by the determinant. Writing $\mathbb{G}_m$
as ${\rm T}_2/{\rm T}'_2$, where ${\rm T}'_2={\rm T}_2\cap \SL_2\simeq
\mathbb{G}_m$, we find that  
$$
\rmH_c(\calM_\calC;q,t)=\frac{1}{qt^2-1},
$$
from which, together with Lemma \ref{r=k=1}, we
deduce~\eqref{small-conj} in this case. 
\end{proof}

\begin{remark}
  Note that the combinatorial Conjecture~\eqref{comb-conj} implies
  that the right-hand side of~\eqref{small-conj} is zero for $n>2$. In
  particular, the above theorem together with
  Conjecture~\eqref{comb-conj} imply that~\eqref{small-conj} is true
  for all $n$.
\end{remark}


\begin{thebibliography}
{}
{\small

\bibitem{BL}{Baird, T. J. {\rm and} Wong, M. L.} : E-polynomials of character varieties for real curves, arXiv:2006.01288.

\bibitem{B}{\sc Behrend, K.} : The Lefschetz trace formula for
  algebraic stacks, \emph{Invent. Math.} {\bf 112} (1993), 127--149.

\bibitem{BKCh}{\sc Benyash-Krivets, V.  and Chernousov,  V.  I.}:
  Representation varieties of the   fundamental groups of compact
  non-orientable surfaces ,  \emph{Mat. Sb.}, {\bf 188} (1997), 47--92

\bibitem{Carlitz}{\sc Carlitz, L.}: q-Bernoulli numbers and polynomials, \emph{Duke Math. J.} {\bf 15} (1948), 987--1000.

\bibitem{cheng}{\sc Cheng, S.}: Character varieties with non-connected structure groups, arXiv:1912.04360.



\bibitem{HodgeIII}{\sc Deligne, P.}: Th\'eorie de Hodge III,  \emph{Inst. Hautes Etudes Sci. Publ. Math.} {\bf 44} (1974), 5--77.

\bibitem{WeilI}{\sc Deligne, P.}: La  conjecture de Weil : I, \emph{Inst. Hautes Etudes Sci. Publ. Math.} {\bf 43} (1974), 273--307.

\bibitem{Ellerman}{\sc Ellerman, D.}: 
The number of direct-sum decompositions of a finite vector space, 
{\tt arXiv:1603.07619v1} 


\bibitem{FK} {\sc Faddeev, L. {\rm and} Kashaev, R.}: Quantum dilogarithm, \emph{Modern
Physics Letters A}, {\bf 9} (1994), 427--434,  {\tt
  arXiv:hep-th/9310070}

\bibitem{Frobenius-Schur} {\sc Frobenius, G. and Schur, I.}: \"Uber
  die reellen Darstellungen der endlichen Gruppen, \emph{Sitzungsberichte
  derk\"oniglich preussischen Akademie der Wissenschaften} (1906),
  186--208 

\bibitem{FG}
{\sc Fulman, J. {\rm and}  Guralnick, R.}:
Conjugacy class properties of the extension of GL(n,q) generated by
the inverse transpose involution, \emph{J. of Algebra} {\bf 275} (2004), 356--396 

\bibitem{GRV} {\sc Gordon, C. and Rodriguez  Villegas, F.},
\emph{On the divisibility of $\# \Hom(\Gamma,G)$ by $|G|$},  J.~Algebra
{\bf 350} (2012), 300--307,  {\tt math.GR 1105.6066v1}

\bibitem{Gow} {\sc Gow, R.}:  Properties of the characters of the finite
  general linear group related to the transpose-inverse involution, \emph{
  Proc. London Math. Soc.} (3)  {\bf 47}  (1983),  no. 3, 493--506. 

\bibitem{HLRV}
{\sc  Hausel T., Letellier, E. {\rm and} Rodriguez Villegas, F.}:
{Arithmetic harmonic analysis on character and quiver varieties},
\emph{Duke Math. J.} {\bf 160} (2011), no. 2, 323--400. 

\bibitem{HLRV1}
{\sc  Hausel T., Letellier, E. {\rm and} Rodriguez Villegas, F.}:
{Arithmetic harmonic analysis on character and quiver varieties
II}, {\tt math.RT/1109.5202v1}  \emph{Adv. Math.} {\bf 234} (2013), 85--128

\bibitem{HMRV} {\sc  Hausel, T., Mereb M. {\rm and}  Rodriguez
    Villegas, F.}: Mirror symmetry in the character table of
  $\SL_n(\F_q)$, (in preparation)

\bibitem{HRV}
{\sc  Hausel, T. {\rm and}  Rodriguez Villegas, F.}: {Mixed Hodge
polynomials of character varieties}, {\em Invent. Math.}   { \bf 174} (2008),
no. 3, 555--624.

\bibitem{LO1} {\sc Laszlo, Y. {\rm and} Olsson, M.} : The six
  operations for sheaves on Artin stacks I: Finite Coefficients,
  \emph{Publ. Math. Inst. Hautes Etudes Sci}. No. 107 (2008),
  109--168. 

\bibitem{LO2} {\sc Laszlo,Y. {\rm and} Olsson, M.}: The six operations
  for sheaves on Artin stacks II: Adic Coefficients,
  \emph{Publ. Math. Inst. Hautes Etudes Sci.} No. 107 (2008),
  169--210. 

\bibitem{Macdonald}
{\sc Macdonald, I. G.}: 
Affine root systems and Dedekind's $\vartheta$-function,
{\em Invent. Math.} {\bf 15} (1972), 91--143. 

\bibitem{macdonald} {\sc Macdonald, I. G.}: Symmetric Functions and Hall
Polynomials, {\em Oxford Mathematical Monographs, second ed., Oxford
Science Publications}. The Clarendon Press Oxford University Press,
New York, 1995.

\bibitem{Mellit1}{\sc Mellit, A.}: Integrality of
  Hausel-Letellier-Villegas kernels, {\em Duke Math. J. } {\bf 167}
  (2018), no. 17,  3171--3205. 

\bibitem{mozgovoy}{\sc Mozgovoy, S.}: A computational criterion for
  the Kac conjecture, {\em J. Algebra} {\bf 318} (2007), no. 2,
  669--679. 


\bibitem{RV}{\sc F. Rodriguez Villegas}:
 Counting colorings on varieties, \emph{Proceedings of 
Primeras Jornadas de Teor\'{\i}a de
N\'umeros (de Espanya)},  Vilanova i la Geltr\'u, Spain.

\bibitem{FS}{\sc F. Schaffhauser}: Lectures on Klein surfaces and
  their fundamental groups. Geometry and Quantization of Moduli Spaces
  67--108, {\rm Adv. Courses Math. CRM Barcelona,
    Birkh\"auser/Springer, Cham, 2016}. 

\bibitem{Stanley}
{\sc Stanley, R.}:
Enumerative combinatorics. Vol. 2.
Cambridge Studies in Advanced Mathematics, {\bf 62}. Cambridge
University Press, Cambridge, 1999 

\bibitem{Waterhouse}
{\sc Waterhouse, W.}
The number of congruence classes in $M\sb n(\bold F\sb q)$
\emph{Finite Fields Appl.} {\bf 1} (1995), no. 1, 57--63.


}\end{thebibliography}
\end{document}